\newtheorem{theorema}{Theorem}
\newtheorem{theoremabis}{Theorem}
\numberwithin{equation}{section}
\newtheorem{theorem}{Theorem}[section]
\newtheorem{lemma}[theorem]{Lemma}
\newtheorem{proposition}[theorem]{Proposition}
\theoremstyle{remark}
\newtheorem{remark}{Remark}
\definecolor{darkgreen}{rgb}{0,0.5,0}
\definecolor{darkblue}{rgb}{0,0,0.7}
\definecolor{darkred}{rgb}{0.9,0.1,0.1}
\definecolor{lightblue}{rgb}{0,0.51,1}
\def\cB{{\mathcal B}}
\def\div{ \hbox{\rm div}  }
\def\N{{\mathbb N}}
\def\R{{\mathbb R}}
\def\T{{\mathbb T}}
\def\Z{{\mathbb Z}}
\begin{document}
\title[\hfilneg \hfil ]
{On symmetry breaking for the Navier-Stokes equations}

\author[T. Barker]{ Tobias Barker}
 \author[C. Prange]{Christophe Prange}
 \author[J. Tan]{Jin Tan}

\address[T. Barker]{\newline Department of Mathematical Sciences, University of Bath, Bath BA2 7AY. UK}
\email{tobiasbarker5@gmail.com}

\address[C. Prange]{\newline  Laboratoire de Math\'{e}matiques AGM, UMR CNRS 8088, Cergy Paris Universit\'{e}, 2 Avenue Adolphe Chauvin, 95302     Cergy-Pontoise Cedex, France}
\email{christophe.prange@cyu.fr}

\address[J. Tan]{\newline  Laboratoire de Math\'{e}matiques AGM, UMR CNRS 8088, Cergy Paris Universit\'{e}, 2 Avenue Adolphe Chauvin, 95302     Cergy-Pontoise Cedex, France}
\email{jin.tan@cyu.fr}

\date{\today}
 \subjclass[2010]{35Q30, 35Q31, 35N20, 76F10}
 \keywords{Navier-Stokes equations; Euler equations; global solutions; shear flows; ill-posedness; norm inflation; inviscid limit; inviscid damping}

\begin{abstract}
Inspired by an open question by Chemin and Zhang 
about the re\-gularity of the 3D Navier-Stokes equations with one initially small component, we investigate symmetry breaking and symmetry preservation. 
Our results fall in three classes. First we prove strong symmetry breaking. Speci\-fically, we demonstrate third component norm inflation (3rdNI) and Isotropic Norm Inflation (INI) starting from zero third component. Second we prove symmetry breaking for initially zero third component, even in the presence of a favorable initial pressure gradient. Third we study certain symmetry preserving solutions with a shear flow structure. Specifically, we give applications to the inviscid limit and exhibit explicit solutions that inviscidly damp to the Kolmogorov flow.
\end{abstract}

\maketitle

\section{Introduction}
Symmetries preserved by evolution play an important role in the mathematical theory of the Navier-Stokes equations and Euler equations:
\begin{equation}\label{NS} 
\begin{aligned} 
 \partial_t{  u^{\nu}}+u^{\nu}\cdot\nabla u^{\nu}+\nabla   P&=\nu\Delta   u^{\nu}\quad&\hbox{in }   \R_+ \times\T^3,\quad
 \div\, u^{\nu}&=0,\quad\nu\geq 0.
\end{aligned}
\end{equation}
On the one hand, certain preserved symmetries lead to the preservation of certain structures that grant smoothness of solutions \cite{Lady68}, \cite{UI68}. On the other hand, preserved symmetries reduce the number of degrees of freedom of the Navier-Stokes and Euler equations, which can make it possible to prove or numerically investigate the existence of singularities \cite{Elg21}, \cite{CH22}, \cite{hou2022potential,hou2022potentially}.

In this vein, in recent years there has been a substantial amount of activity aimed at showing that additional assumptions of one component of the velocity field (solving the Navier-Stokes equations) imply that the solution is regular. On the other side of coin, this corresponds to showing that solutions of the Navier-Stokes equations that become singular must do so in an isotropic manner. Research in this direction was initiated in the seminal paper of Neustupa and Penel \cite{NP99}. Since then there have been many contributions to one-component regularity for the Navier-Stokes equations, with recent contributions showing regularity provided that one-component of the velocity field has a finite norm either almost preserved \cite{CW21,CGZ19} or preserved\footnote{Currently one component regularity criteria in terms of norms preserved with respect to the Navier-Stokes rescaling, involve spatial norms with some differentiability or Lorentz time norms. It remains a long standing open problem if a solution to the Navier-Stokes equations $v$, with third component $v_{3}\in L^{q}(0,T; L^{p}(\mathbb{R}^3))$ $(\frac{3}{p}+\frac{2}{q}=1,\,p\in [3,\infty])$ is smooth on $\mathbb{R}^3\times (0,T]$.} with respect to the Navier-Stokes scaling symmetry \cite{CZ16,CZZ,WWZ20}.

The purpose of this paper is to understand the dynamics of the Navier-Stokes equations when one-component of the initial data is zero. Throughout we will set the third component of the initial data to be zero, without loss of generality. Our main motivation is an open question raised by Chemin, Zhang and Zhang \cite{CZZ} when discussing endpoint one-component regularity criteria. Specially, in \cite[page 873]{CZZ}, Chemin, Zhang and Zhang formulate the following open question:
\begin{quote}
\textbf{(Q)}\ [If] for some unit vector $e$ of $\R^3$, [the component of the initial data]  $\|u_{\rm in}\cdot e\|_{H^\frac12}$ is small with respect to some universal constant, is it implied that there is no blow 
up for the Fujita-Kato solution of (NS)?
\end{quote}

\subsection{Main results of the paper}
In relation to the aforementioned open problem \textbf{(Q)}, our first two results show that initial data with zero third component can exhibit third component norm inflation (3rdNI, Theorem \ref{Th-3}) and Isotropic Norm Inflation (INI, Theorem \ref{Th-3bis}) with respect to critical norms specified in \cite{BP08}. 
\begin{theorema}[strong symmetry breaking]\label{Th-3}
  For any $ 0<\delta<1$, there exists mean-free $C^\infty(\T^3)$ solenoidal initial data $u_{\rm in}$ and $\bar u_{\rm in}$\footnote{This data has the structure given \eqref{3D-nu-data}. Our result shows that the solution map is not continuous at $\bar u_{\rm in}$ in the critical space $\dot{B}^{-1}_{\infty, \infty}$.} with vanishing third component, 
\begin{align*}
     \|u_{\rm in}-\bar u_{\rm in}\|_{\dot{B}^{-1}_{\infty, \infty}} =\|u_{\rm in}^{\rm h}-\bar u_{\rm in}^{\rm h}\|_{\dot{B}^{-1}_{\infty, \infty}}<\delta,
\end{align*}
and such that the following holds true.\\
There exists a unique solution $u$ $({\rm resp. } ~\bar u)$ of the Cauchy problem \eqref{NS} subject to initial data $u_{\rm in}$ (resp. $\bar u_{\rm in}$) belonging to $C^\infty((0, T]\times \T^3)$ for some time $0<T<\delta$ with $\bar u^3\equiv0$ on $[0,T]\times\R^3$ and 
\begin{align*}
    \|u^3{(T,\cdot)}-\bar u^3{(T,\cdot)}\|_{\dot{B}^{-1}_{\infty, \infty}}=  \|u^3{(T,\cdot)}\|_{\dot{B}^{-1}_{\infty, \infty}}>\frac{1}{\delta}.
\end{align*}
Moreover,
\begin{align}\label{S4-eq}
     \min\big(\|u^1{(T,\cdot)}\|_{L^3},\|u^2{(T,\cdot)}\|_{L^3},\|u^3{(T,\cdot)}\|_{L^3}\big)>\frac{1}{\delta}.
\end{align}
\end{theorema}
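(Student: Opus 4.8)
The plan is to build on the classical norm-inflation mechanism for the Navier-Stokes equations in $\dot B^{-1}_{\infty,\infty}$ due to Bourgain--Pavlović \cite{BP08}, but to arrange the construction so that the third component of the initial data vanishes and the inflation is transferred \emph{into} the third component through the nonlinear interaction. Concretely, I would take $\bar u_{\rm in}$ to be a two-dimensional (in the $x_1,x_2$ variables, say) shear-type or Beltrami-type profile with $\bar u^3_{\rm in}\equiv 0$; for such data the Navier-Stokes dynamics stays in the class $\bar u^3\equiv 0$ for all times (this is the symmetry that is preserved, and it gives the solution $\bar u$ with $\bar u^3\equiv 0$ on $[0,T]$ claimed in the statement). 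The perturbed datum $u_{\rm in}$ is obtained by adding to $\bar u_{\rm in}$ a high-frequency, small-amplitude, divergence-free perturbation, still with vanishing third component, of the Bourgain--Pavlović type: a finite sum of terms of the form $a_k\, r\,\big(\cos(k\cdot x)\,\xi_k\big)$ where $r$ denotes an appropriate projection enforcing $\div = 0$ and $\xi_k\perp e_3$, with the frequency vectors $k$ arranged in resonant pairs $k^{(1)}, k^{(2)}$ whose difference $k^{(1)}-k^{(2)}$ is a fixed low frequency with a nonzero third direction, so that the quadratic term $u_{\rm in}\otimes u_{\rm in}$ produces a low-frequency output with a nonzero $e_3$-component. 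Choosing the amplitudes and the number of resonant pairs as in \cite{BP08} (roughly $a_k\sim \kappa^{-\alpha}$, frequencies of size $\kappa^n$, and $\sim \kappa^{\beta}$ modes), one makes $\|u_{\rm in}-\bar u_{\rm in}\|_{\dot B^{-1}_{\infty,\infty}}$ as small as $\delta$ while the first Picard iterate of $u^3$ at a short time $T<\delta$ already has $\dot B^{-1}_{\infty,\infty}$ norm $>1/\delta$.

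The key steps, in order, are: (i) fix the profile $\bar u_{\rm in}$ and verify that the corresponding solution $\bar u$ exists, is smooth on $(0,T]\times\T^3$, and satisfies $\bar u^3\equiv 0$; (ii) define the high-frequency perturbation with $\xi_k\perp e_3$ and resonant frequency pairs engineered so that the self-interaction feeds $e_3$, and check $\div u_{\rm in}=0$, mean-free, $C^\infty$, and the smallness of the difference in $\dot B^{-1}_{\infty,\infty}$; (iii) run the Picard iteration $u = e^{t\Delta}u_{\rm in} - \mathbb{B}(u,u)$ with $\mathbb{B}(u,v)=\int_0^t e^{(t-s)\Delta}\mathbb{P}\,\div(u\otimes v)\,ds$, and show as in \cite{BP08} that the second iterate $u_2^3 = -\mathbb{B}(e^{t\Delta}u_{\rm in}, e^{t\Delta}u_{\rm in})^3$ has large $\dot B^{-1}_{\infty,\infty}$ norm at time $T$, the point being that the resonant low-frequency mode it creates is \emph{purely in the third component} and is not exponentially damped on $[0,T]$; (iv) control the error $u - e^{t\Delta}u_{\rm in} - u_2$ in a subcritical space (e.g. the space $X_T$ used by Bourgain--Pavlović, or an $L^\infty_T \dot H^s$ / $L^\infty_T L^p$ space adapted to the torus) and show it is negligible compared to $1/\delta$ at time $T$, so that the large lower bound survives; (v) finally, since $u(T,\cdot)$ is smooth, upgrade the $\dot B^{-1}_{\infty,\infty}$ lower bound on $u^3(T,\cdot)$ to an $L^3$ lower bound on \emph{each} component $u^1,u^2,u^3$ at time $T$ — for $u^3$ this follows from the embedding-type inequality relating $\|f\|_{\dot B^{-1}_{\infty,\infty}}$ to $\|f\|_{L^3}$ together with smoothness (or directly from the structure of the resonant mode), and for $u^1,u^2$ one either notes that incompressibility ($\partial_3 u^3 = -\partial_1 u^1 - \partial_2 u^2$) forces the horizontal components to be large as well, or builds a little extra large low-frequency horizontal content into the resonant interaction on purpose.

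The main obstacle I expect is step (iii)--(iv): making sure that the norm inflation genuinely occurs in the third component and is not destroyed either by the background flow $\bar u$ (whose interaction with the perturbation, $\mathbb B(\bar u, \delta u) + \mathbb B(\delta u, \bar u)$, must be shown to be lower-order) or by the heat-semigroup damping over the time interval $[0,T]$ with $T<\delta$ small. The Bourgain--Pavlović scheme is delicate precisely because one needs $T$ large enough (compared to the inverse square of the small frequencies produced) for the resonant term to build up, yet small enough for the critical norm of the data to stay small and for the iteration to close; pushing $T<\delta$ forces a careful rebalancing of the three parameters (amplitude, frequency scale, number of modes). A secondary technical point is working on $\T^3$ rather than $\R^3$: the frequencies are now lattice vectors, so the resonance condition $k^{(1)}-k^{(2)} = $ (fixed low mode with nonzero third coordinate) has to be realized with integer vectors, which is easy but must be written down explicitly, and the heat semigroup on the torus has a spectral gap that, if anything, only helps the error estimates. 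Once these are in place, the passage to the $L^3$ bounds in (v) is routine given the regularity of $u(T,\cdot)$.
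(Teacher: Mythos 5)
Your proposal follows essentially the same route as the paper: a Bourgain--Pavlovi\'c-type superposition of divergence-free plane waves orthogonal to $\mathbf e_3$, a 2.75D shear-flow datum $\bar u_{\rm in}$ whose third component stays zero, resonant high-frequency pairs whose difference has a nonzero third coordinate, inflation detected in the second Picard iterate, and a closing error estimate. Two details, though, deserve sharpening. First, the mechanism producing a nonzero third component is more subtle than ``the quadratic term $u_{\rm in}\otimes u_{\rm in}$ produces an output with nonzero $e_3$-component'': since $u^3_{\rm in}=0$, the third component of $u_1\cdot\nabla u_1$ is identically zero, and it is only the Leray projector $\mathbb P$ (through the pressure) that injects content into the vertical direction; for this one needs, on top of the resonance condition $\mathbf k_j-\mathbf k_j'=\boldsymbol\eta$ with $\boldsymbol\eta\cdot\mathbf e_3\neq 0$, the additional non-degeneracy $\mathbf v\cdot\mathbf k_j'\neq 0$ and $\mathbf v'\cdot\mathbf k_j\neq 0$ (the paper's 3rdNI Condition 5); also the resonant output in the paper is along $(-1,1,1)$, not ``purely in the third component.'' Second, your step (v) as first stated does not work: the embedding $L^3\hookrightarrow\dot B^{-1}_{\infty,\infty}$ goes the wrong way for deducing a large $L^3$ norm from a large $\dot B^{-1}_{\infty,\infty}$ norm, and incompressibility $\partial_3u^3=-(\partial_1u^1+\partial_2u^2)$ by itself does not force $u^1$ and $u^2$ to be individually large. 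Your fallback is the right one and is what the paper does: by design the inflation term $v_{2,1}$ is a single low-frequency plane wave proportional to $(-1,1,1)\sin(x_1+2x_2-x_3)$ with large coefficient, so all three components of $v_{2,1}$ have comparably large $L^3$ norms, and every remaining piece of $u=u_1+v_{2,1}+v_{2,2}+v_{2,3}+w$ is uniformly small in $L^\infty$ (hence in $L^3$), which gives \eqref{S4-eq}.
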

{Theorem \ref{Th-3} does not provide negative evidence towards \textbf{(Q)}, but demonstrates that regularity in that case can only be granted by a yet to be discovered mechanism unrelated to the preservation of smallness of the third component of the corresponding solution. However, the solution in Theorem \ref{Th-3} remains small in $\dot{B}^{-1}_{\infty,\infty}$ at $T$ in certain directions (see the discussion in subsection \ref{sec.heuriso}). Thus, the construction in Theorem \ref{Th-3} does not rule out the possibility that solutions, with initial third component equal to zero, remain small along some time-varying direction. Such a possibility is in fact ruled out by our second result below.}
{\begin{theoremabis}[strong isotropic symmetry breaking]\label{Th-3bis}
 For any $ 0<\delta<1$, there exists mean-free $C^\infty(\T^3)$ solenoidal initial data $u_{\rm in}$ and $\bar u_{\rm in}$\footnote{This data has the structure given \eqref{3D-nu-data-bis}.} with vanishing third component, 
\begin{align*}
     \|u_{\rm in}-\bar u_{\rm in}\|_{\dot{B}^{-1}_{\infty, \infty}} =\|u_{\rm in}^{\rm h}-\bar u_{\rm in}^{\rm h}\|_{\dot{B}^{-1}_{\infty, \infty}}<\delta,
\end{align*}
and such that the following holds true.\\
There exists a unique solution $u$ of the Cauchy problem \eqref{NS} subject to initial data $u_{\rm in}$ belonging to $C^\infty((0, T]\times \T^3)$ for some time $0<T<\delta$ and such that 
\begin{align}\label{e.sini}
    {\inf_{\mathbf{e}\in\mathbb{R}^3:|\mathbf{e}|=1}\|u(T,\cdot)\cdot\mathbf{e}\|_{\dot{B}^{-1}_{\infty, \infty}}>\frac{1}{\delta}.}
\end{align}
\end{theoremabis}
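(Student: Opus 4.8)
The plan is to produce the initial datum as a superposition of three Bourgain--Pavlović--type norm inflation ``gadgets'', one per coordinate axis, living at well separated high frequency scales $N_1\ll N_2\ll N_3$, and arranged so that at the (small) time $T\simeq N_3^{-2}<\delta$ the solution carries an amount $\gtrsim 1/\delta$ of $\dot{B}^{-1}_{\infty,\infty}$ mass along three pairwise distinct low frequency modes $\zeta_1,\zeta_2,\zeta_3$, with these three ``output velocities'' pointing essentially along $e_1,e_2,e_3$; isotropy of the inflation is then immediate. The datum must have zero third component, which forces the $e_3$ mechanism to be driven purely by horizontal interactions (as in Theorem~\ref{Th-3}), while the $e_1$ and $e_2$ mechanisms can be realized by flows that are independent of $x_3$.

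First I would fix the three blocks. For the vertical direction I would re-use the datum and solution built in the proof of Theorem~\ref{Th-3}: vanishing third component, arbitrarily small in $\dot{B}^{-1}_{\infty,\infty}$, supported at a high scale $N_3$, and such that $\widehat{u(T,\cdot)}(\zeta_3)$ has $e_3$-component of size $\gtrsim 1/\delta$ for some $\zeta_3$ with $|\zeta_3|=O(1)$. For the horizontal directions I would take, for $i=1,2$, an $x_3$-independent, two-dimensional, solenoidal, mean-free Bourgain--Pavlović flow in the variables $(x_1,x_2)$, placed at scale $N_i$, whose frequency configuration is rotated inside the horizontal plane so that the low frequency mode $\zeta_i$ generated at first Picard order carries velocity essentially parallel to $e_i$; being $x_3$-independent and planar, such a flow has zero third component and solves \eqref{NS} verbatim as a three-dimensional flow, and it is as small as we wish in $\dot{B}^{-1}_{\infty,\infty}$. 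Choosing the $\zeta_i$ pairwise distinct and nonzero and the $N_i$ well separated, the datum $u_{\rm in}=g_1+g_2+g_3$ (this is the structure \eqref{3D-nu-data-bis}, with $\bar u_{\rm in}$ the corresponding reference datum, which one may take to be $0$) is a trigonometric polynomial, hence $C^\infty$, mean-free, solenoidal, with vanishing third component, and $\|u_{\rm in}-\bar u_{\rm in}\|_{\dot{B}^{-1}_{\infty,\infty}}=\|u_{\rm in}^{\rm h}-\bar u_{\rm in}^{\rm h}\|_{\dot{B}^{-1}_{\infty,\infty}}<\delta$ as soon as each block has size $\le\delta/C$.

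Next I would decouple the three blocks along the flow. The $C^\infty$ datum generates a unique local smooth solution $u\in C^\infty((0,T]\times\T^3)$, and in the mild formulation one expands the bilinear term as $B(u_{\rm in},u_{\rm in})=\sum_{i,j}B(g_i,g_j)$. The diagonal terms $B(g_i,g_i)$ are exactly the designed inflation terms and deliver, at time $T$, a term of the form $L_i e_i$ at frequency $\zeta_i$ with $|L_i|>3/\delta$; because $N_1\ll N_2\ll N_3$, every off-diagonal term $B(g_i,g_j)$ ($i\neq j$) is Fourier supported at frequencies of order $N_{\max(i,j)}\gg 1$, hence negligible in $\dot{B}^{-1}_{\infty,\infty}$ (the $N_{\max}$ from the derivative being compensated by the $N_{\max}^{-1}$ weight of the norm, and the remaining amplitude being as small as we like after the short-time integration), and — a point requiring only elementary frequency bookkeeping — no off-diagonal interaction, nor any nested interaction, can rebuild one of the $O(1)$ modes $\zeta_1,\zeta_2,\zeta_3$. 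The higher Picard iterates and the Duhamel remainder are controlled on $[0,T]$ by the very estimates established in the proof of Theorem~\ref{Th-3}. Consequently $\widehat{u(T,\cdot)}(\zeta_i)=L_i e_i+O(1)$ with $|L_i|>3/\delta$ for $i=1,2,3$.

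Finally, on $\T^3$ one has $\|f\|_{\dot{B}^{-1}_{\infty,\infty}}\gtrsim|\widehat f(\zeta)|$ for any frequency $\zeta$ with $|\zeta|=O(1)$, since $\|\dot\Delta_j f\|_{L^\infty}\ge|\widehat{\dot\Delta_j f}(\zeta)|$. Given a unit vector $\mathbf e=(e^1,e^2,e^3)$, at least one index $i$ satisfies $|e^i|\ge 1/\sqrt3$, and then $\|u(T,\cdot)\cdot\mathbf e\|_{\dot{B}^{-1}_{\infty,\infty}}\gtrsim|\widehat{u(T,\cdot)}(\zeta_i)\cdot\mathbf e|\ge|e^i|\,|L_i|-O(1)>1/\delta$ after relabeling $\delta$, which is \eqref{e.sini}. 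The main obstacle is the same delicate point as in Theorem~\ref{Th-3}: proving the lower bound on the second Picard iterate together with the upper bound on the remainder in a regime where the datum is \emph{large} in every subcritical norm, so that no off-the-shelf well-posedness theory applies and one must work in function spaces adapted to the highly oscillatory structure over the short interval $T\simeq N_3^{-2}$. The genuinely new ingredient here is ensuring that the three inflation mechanisms neither interact nor produce parallel output velocities, so that the infimum over $\mathbf e$ in \eqref{e.sini} cannot see a cancellation; separating the scales $N_1\ll N_2\ll N_3$ and the output frequencies $\zeta_1,\zeta_2,\zeta_3$, and rotating the two horizontal gadgets onto $e_1$ and $e_2$, is precisely what achieves this.
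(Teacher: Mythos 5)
Your plan matches the paper's in spirit: three norm-inflation gadgets producing low-frequency outputs along three linearly independent directions, frequency separation between gadgets, error control via the $L^\infty$ machinery of Theorem~\ref{Th-3}, and at the end an elementary algebraic fact ensuring that every unit vector $\mathbf e$ sees at least one of the three outputs. The paper implements this by splitting one list of $r^3$ plane waves, with $k_j=2^{3j}T^{-1/2}$, into three consecutive blocks $j\in[1,r]$, $[r+1,r^2]$, $[r^2+1,r^3]$ carrying different $(\mathbf{v_j},\mathbf{v_j'},\mathbf{k_j},\mathbf{k_j'})$, and uses the fact that every unit vector has a nonsmall dot product with at least one of $(-1,1,1)$, $(0,-1,0)$, $(1,0,0)$; this is the same algebraic step you invoke (yours says ``one of $(e_1,e_2,e_3)$,'' but since the vertical gadget's projected output is forced to be something like $(-1,1,1)$ rather than $e_3$ --- Leray projection mixes the horizontal forcing into both horizontal and vertical components --- you would need the paper's version of the fact for three linearly independent directions, an easy fix).

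There is, however, a genuine gap in the scale choice. You place the three gadgets at frequencies $N_1\ll N_2\ll N_3$ and set $T\simeq N_3^{-2}$. The inflation from one Bourgain--Pavlovi\'c gadget living at frequencies $k_j$ is, schematically, $\kappa^2\sum_j\frac1j\min(k_j^2T,1)$, and achieving size $\sim\kappa^2\log r$ requires $k_j^2T\gtrsim 1$ for essentially all $j$, i.e.\ the \emph{smallest} frequency in the gadget must be $\gtrsim T^{-1/2}$. If $T\simeq N_3^{-2}$ and the low gadgets are ``well separated'' (say $N_2/N_1$, $N_3/N_2$ of order $2^{3r}$ as in the paper), then $N_1^2 T\sim 2^{-12r}\ll 1$ and the gadget-1 inflation collapses to $O(\kappa^2 k_r^{(1)2}T/r)\ll\kappa^2$; only the $e_3$-direction inflates and the isotropic lower bound \eqref{e.sini} fails. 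The paper avoids this by placing \emph{all} frequencies at $\{2^{3j}T^{-1/2}\}_{j\ge 1}$, i.e.\ $T$ is tuned to the \emph{lowest} scale; in your notation one should take $T\simeq N_1^{-2}$. A second, smaller issue: for the two $x_3$-independent gadgets you need the Leray-projected output $\mathbb P\bigl((\mathbf v\cdot\mathbf{k'})\mathbf{v'}-(\mathbf{v'}\cdot\mathbf k)\mathbf v\bigr)$ to have size $O(1)$, but with horizontal $\mathbf k_j\approx\mathbf{k_j'}$ the naive choice (e.g.\ $|\mathbf{k_j}|=|\mathbf{k_j'}|$, $\boldsymbol\eta\perp\mathbf{k_j}$) makes this projection of size $O(1/|\mathbf{k_j}|)$ and kills the inflation; one must arrange $|\mathbf{k_j}|^2-|\mathbf{k_j'}|^2\sim|\mathbf{k_j}|$, which the paper achieves with wave vectors whose dominant component is vertical. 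Both points are fixable, but as written the construction does not close.
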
}
{We dub the norm inflation in all directions in \eqref{e.sini} `Isotropic Norm Inflation' (INI).}

Now define the initial pressure $P_{\rm in}$ associated to the initial data $u_{\rm in}$, which satisfies
\begin{equation}\label{presideqn}
-\Delta P_{\rm in}:=   \nabla_{} u_{\rm in} :(\nabla u_{\rm in})^{\rm T}. 
\end{equation}
Note that the initial data {used to prove Theorems \ref{Th-3} and \ref{Th-3bis}, which will heuristically be described in subsections \ref{sec.heur}-\ref{sec.heuriso}, both necessarily generate} an initial pressure $P_{\rm in}$ that satisfies $\partial_{3}P_{\rm in}\neq 0.$ From the equation for the third component of the associated solution \eqref{NS}, it is qualitatively clear that such an initial pressure will always produce a solution that breaks the symmetry of the third component zero. In this regard, we call pressure of this type \textit{unfavorable}.\footnote{\label{foot.unfavorable}The terminology \textit{unfavorable} refers here to the fact that the pressure is unfavorable to symmetry preservation.} Notice that there are other examples of plane-wave initial data that demonstrate symmetry breaking. We refer for instance to Figure \ref{fig.TG} that shows breaking for the Taylor-Green vortex 
$$
u_{in}(x_1,x_2,x_3)=(\sin x_1 \cos x_2 \cos x_3,-\cos x_1 \sin x_2 \cos x_3,0).
$$
Notice that 
$$
\Delta P_{in}=2(\cos x_3)^2\big((\cos x_1 \cos x_2)^2-(\sin x_1 \sin x_2)^2\big)
$$
so that the pressure for the Taylor-Green vortex is also unfavorable.

\begin{figure}[h]
\begin{center}
\includegraphics[scale=.4]{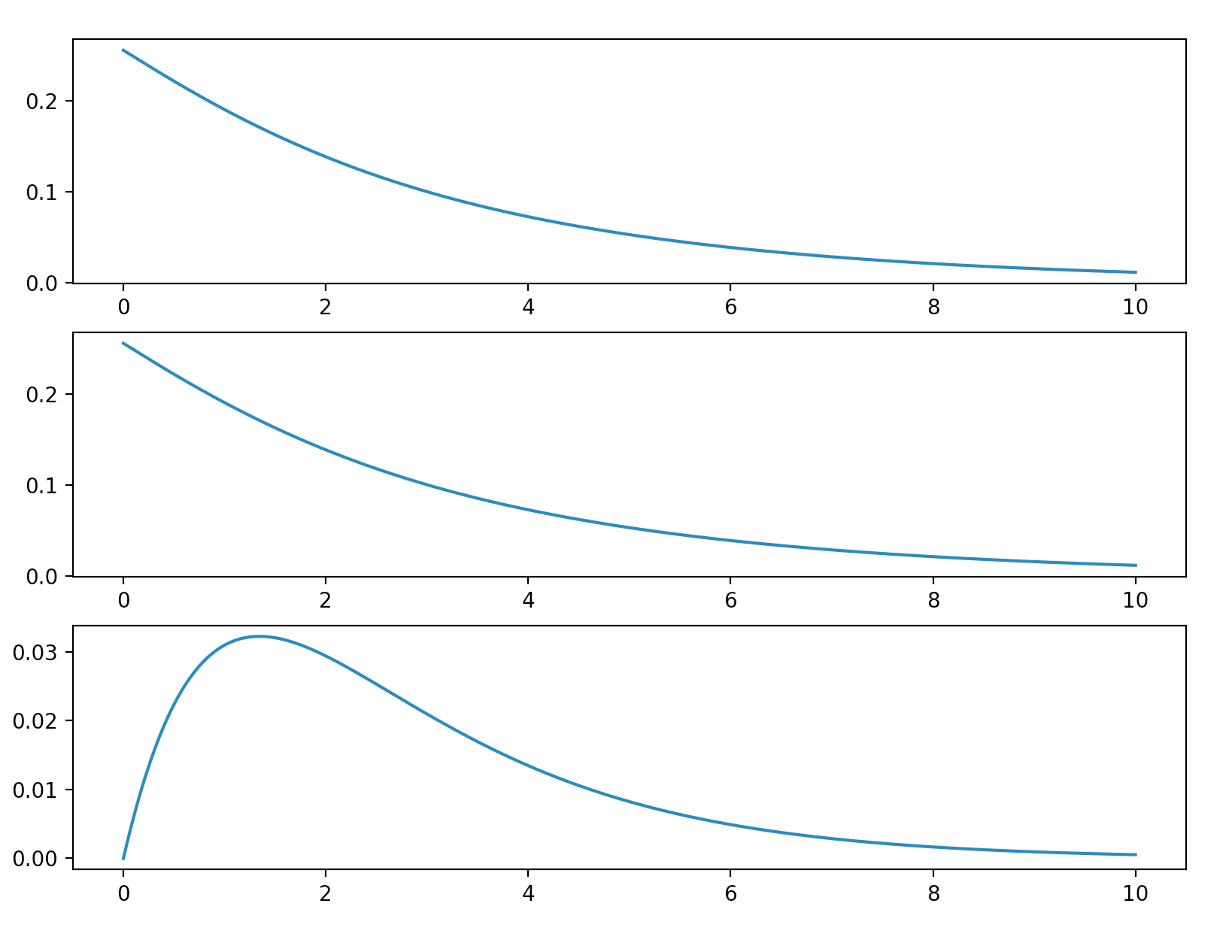}
\caption{Taylor-Green vortex solution of Navier-Stokes with viscosity $\nu=10^{-1}$. From top to bottom: $\|u^1(\cdot,t)\|_{L^1(\T^3)}$, $\|u^2(\cdot,t)\|_{L^1(\T^3)}$ and $\|u^3(\cdot,t)\|_{L^1(\T^3)}$. This simulation shows breaking with initial pressure unfavorable (see Footnote \ref{foot.unfavorable} for a definition)  
to symmetry preservation. Choice of parameters: total time $T=10$ and time step $dt=10^{-2}$; spectral code by Mikael Mortensen taken from \texttt{https://github.com/spectralDNS/spectralDNS} with $(2^5)^3$ mesh points.}
\label{fig.TG}
\end{center}
\end{figure}

As a consequence of the above, for initial data $u_{\rm in}$ with zero third component, we say that an initial pressure $P_{\rm in}$ is \emph{favorable}\footnote{\label{foot.favorable}The terminology \textit{favorable} refers here to the fact that the pressure is favorable to symmetry preservation. We demonstrate, see Theorem \ref{Th-2} that favorable pressure is still not enough to preserve the vanishing of the third component of the velocity, if it is zero initially.} if 
\begin{align}\label{Structure-initialdata}
\div_{\rm h} u^{\rm h}_{\rm in}=0\quad{\rm with}\quad \partial_3 P_{\rm in}=0, \quad{\rm where}~ -\Delta P_{\rm in}:=   \nabla_{\rm h} u^{\rm h}_{\rm in} :(\nabla_{\rm h} u^{\rm h}_{\rm in})^{\rm T}.
\end{align}
For a \textit{favorable} initial pressure, the equation for the third component of \eqref{NS} does not immediately imply that the third component of the solution breaks symmetry and becomes non-zero. In the Theorem below we are able to demonstrate an initial data below, which has zero third component and favorable initial pressure, yet the corresponding solution  breaks the symmetry and has non-zero third component on some time interval.

\begin{theorema}[symmetry breaking despite favorable pressure gradient]\label{Th-2}
We consider the initial data
\begin{equation}\label{initial-Growth-3D} 
 u_{\rm in}{=}   \left(  
            \cos x_2\, \frac{N}{N+\sin x_3},\,
   \cos x_1\,  \frac{N+\sin x_3}{N},\,
         0\right) 
\end{equation}
which has favorable initial pressure gradient in the sense that $\partial_3P_{in}=0$, see \eqref{Structure-initialdata}.\\
 Then there exists a positive constant    $N_0$ such that for any  $N>N_0,$ the initial data given by \eqref{initial-Growth-3D} generates a unique solution $u$ to the Navier-Stokes equations \eqref{NS} on $[0,1]\times\T^3$ that satisfies
\begin{equation}\label{e.loweru3}
  \|u^3(t, \cdot)\|_{L^\infty(\T^3)} \sim  \frac{t^2}{N^2},
\end{equation}
 for any   $0\leq t\leq  \frac{1}{N^2}\ll1$.
\end{theorema}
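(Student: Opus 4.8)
The plan is to quantify how the viscosity alone drives the solution off the invariant manifold $\{u^3=0\}$. First, for $N>1$ the field $u_{\rm in}$ in \eqref{initial-Growth-3D} is $C^\infty(\T^3)$, mean-free and solenoidal ($\partial_1u_{\rm in}^1=\partial_2u_{\rm in}^2=\partial_3u_{\rm in}^3=0$); writing $f(x_3):=\tfrac{N}{N+\sin x_3}$ and $g(x_3):=\tfrac{N+\sin x_3}{N}$, so that $fg\equiv1$, a direct computation gives $(u_{\rm in}\cdot\nabla)u_{\rm in}=-\nabla(\sin x_1\sin x_2)$. Thus $u_{\rm in}$ is in fact a \emph{stationary solution of the Euler equations} with pressure $P_{\rm in}=\sin x_1\sin x_2$, which in particular satisfies $\partial_3 P_{\rm in}=0$, consistent with \eqref{Structure-initialdata}. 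A convenient reference solution is the $f\equiv g\equiv1$ member of the family \eqref{initial-Growth-3D}, namely $U(t,x):=e^{-\nu t}(\cos x_2,\cos x_1,0)$, which solves \eqref{NS} exactly with pressure $P_U=e^{-2\nu t}\sin x_1\sin x_2$ (using $\Delta U=-U$ and $U\cdot\nabla U=-\nabla P_U$), and satisfies $U^3\equiv0$ and $\partial_3P_U\equiv0$.

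Since $u_{\rm in}-U(0,\cdot)=\big((f-1)\cos x_2,(g-1)\cos x_1,0\big)=O(1/N)$ in every $C^k(\T^3)$, I would next run the standard perturbative well-posedness theory around $U$: the difference $v:=u-U$ solves the Navier--Stokes-type system $\partial_t v+U\cdot\nabla v+v\cdot\nabla U+v\cdot\nabla v+\nabla(P-P_U)=\nu\Delta v$, $\div v=0$, with $v(0,\cdot)=O(1/N)$, and energy estimates (the coefficients involving $U$ are bounded, the pressure drops out by $\div v=0$) show that for $N\ge N_0$ there is a unique solution $u$ to \eqref{NS}, $C^\infty$ on $[0,1]\times\T^3$, with $v$ and all of its space--time derivatives $O(1/N)$ uniformly on $[0,1]$. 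In particular $u^3=v^3=O(1/N)$; moreover $-\Delta(P-P_U)=2\nabla U:(\nabla v)^{\rm T}+\nabla v:(\nabla v)^{\rm T}$ gives $\|P-P_U\|_{C^k}=O(1/N)$ on $[0,1]$ together with all its time derivatives, so, since $\partial_3P_U=0$, the forcing in the $u^3$-equation $\partial_tu^3-\nu\Delta u^3=-u\cdot\nabla u^3-\partial_3 P$ is $\partial_3P=\partial_3(P-P_U)=O(1/N)$.

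Now I would expand the $u^3$-equation in time at $t=0$. Because $u_{\rm in}$ solves the stationary Euler equations, the nonlinear and pressure terms of \eqref{NS} cancel at $t=0$, so $\partial_t u(0,\cdot)=\nu\Delta u_{\rm in}$, whence $\partial_tu^3(0,\cdot)=\nu\Delta u_{\rm in}^3=0$. Differentiating $-\Delta P=\nabla u:(\nabla u)^{\rm T}$ once in time and inserting $\partial_t u(0,\cdot)=\nu\Delta u_{\rm in}$,
\begin{equation*}
-\Delta\,\partial_tP(0,\cdot)=2\nu\,\nabla(\Delta u_{\rm in}):(\nabla u_{\rm in})^{\rm T}=2\nu\sin x_1\sin x_2\Big(\frac{2\cos^2 x_3}{(N+\sin x_3)^2}-2\Big),
\end{equation*}
the crucial structural point being that the $x_3$-dependent part of the right-hand side is $O(1/N^2)$ while the $O(1)$ part is $x_3$-independent. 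Solving, $\partial_tP(0,x)=\nu\sin x_1\sin x_2\,\rho(x_3)$ with $\rho=-2+(2-\partial_3^2)^{-1}\big(4\cos^2 x_3/(N+\sin x_3)^2\big)$; in particular $\rho'\not\equiv0$ and $\|\rho'\|_{L^\infty}\sim N^{-2}$. Since $u^3(0,\cdot)=\partial_tu^3(0,\cdot)=0$, all other contributions to $\partial_t^2u^3(0,\cdot)=\partial_t\big(\nu\Delta u^3-u\cdot\nabla u^3-\partial_3P\big)(0,\cdot)$ vanish, leaving $2w:=\partial_t^2u^3(0,\cdot)=-\partial_3\partial_tP(0,\cdot)=-\nu\sin x_1\sin x_2\,\rho'(x_3)$, so that $\|w\|_{L^\infty(\T^3)}\sim\nu N^{-2}$. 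A time-Taylor expansion with integral remainder then gives $u^3(t,\cdot)=t^2w+r(t,\cdot)$ with $\|r(t,\cdot)\|_{L^\infty}\le\tfrac16 t^3\sup_{s\in[0,1]}\|\partial_s^3u^3(s,\cdot)\|_{L^\infty}\le Ct^3/N$, since (by the second paragraph) every term of $\partial_s^3u^3$ carries a factor $v$, $u^3$ or $P-P_U$, hence a factor $O(1/N)$. For $0\le t\le N^{-2}$ this yields $Ct^3/N\le Ct^2N^{-3}\ll t^2\|w\|_{L^\infty}$, hence $\|u^3(t,\cdot)\|_{L^\infty}\sim t^2\|w\|_{L^\infty}\sim\frac{t^2}{N^2}$, which is \eqref{e.loweru3}.

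The main obstacle is the second step carried out with constants \emph{uniform in $N$}, and in particular keeping the gain of one power of $1/N$ in the remainder $\partial_t^3u^3$; this $1/N$ gain (which traces back to $f,g\to1$) is what allows the conclusion to hold on the entire interval $[0,N^{-2}]$ rather than only on $[0,c_0N^{-2}]$, as a cruder bound $\|r(t,\cdot)\|_{L^\infty}\le Ct^3$ would give. Conceptually, the heart of the third step is the two observations that $u_{\rm in}$ is a steady Euler flow --- so that the departure from $\{u^3=0\}$ is driven purely by viscosity, hence is $O(\nu)$ and quadratic in $t$ --- and that the resulting viscous forcing of $u^3$ has $x_3$-frequency content of size only $O(1/N^2)$.
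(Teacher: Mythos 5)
Your proposal is correct in substance but takes a genuinely different route from the paper. The paper works with the Duhamel expansion $u=u_1+v_2+w$, where $u_1=e^{t\Delta}u_{\rm in}$ and $v_2=\mathcal B(u_1,u_1)$: it Taylor-expands $\tfrac1{N+\sin x_3}$ inside $v_2^3$, isolates a dominant piece $S_1$ in the decomposition $\Delta u_2^3=S_1+S_2+S_3$ satisfying $\|S_1\|_{\dot B^{-2}_{\infty,\infty}}\gtrsim t^2/N^2$ with the other pieces $O(t^2/N^3)$, and controls the remainder $w$ by $L^\infty$ bilinear estimates plus the absorbing lemma, getting $\|w\|_{L^\infty}\lesssim T^{5/2}/N$. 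You instead begin from the structural observation (not explicitly made in the paper) that, because $fg\equiv1$, the field $u_{\rm in}$ is a \emph{stationary Euler} solution with pressure $\sin x_1\sin x_2$. That gives the clean conceptual explanation for the $O(t^2)$ onset — at $t=0$ the only drift off $\{u^3=0\}$ is $\nu\Delta u_{\rm in}$, whose third component vanishes — and reduces the computation to a single elliptic solve for $\partial_tP(0,\cdot)$, from which $\partial_t^2u^3(0,\cdot)=-\partial_3\partial_tP(0,\cdot)$ with $\|\rho'\|_{L^\infty}\sim N^{-2}$. (I checked: $(u_{\rm in}\cdot\nabla)u_{\rm in}=-\nabla(\sin x_1\sin x_2)$, and $-\Delta\partial_tP(0,\cdot)=2\nu\nabla u_{\rm in}:(\nabla\Delta u_{\rm in})^{\rm T}$ indeed equals $2\nu\sin x_1\sin x_2\bigl(\tfrac{2\cos^2x_3}{(N+\sin x_3)^2}-2\bigr)$, using $(fg)''=0$.) Your remainder control, via a time-Taylor expansion with $\sup_{[0,1]}\|\partial_s^3u^3\|_{L^\infty}=O(1/N)$, plays the role of the paper's Step~3. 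The one place where you are thinner than the paper is the perturbative claim that all space--time derivatives of $v=u-U$ (and of $P-P_U$) are uniformly $O(1/N)$ on $[0,1]$; this is standard by $H^s$ energy estimates plus elliptic regularity for the pressure but would need to be written out, whereas the paper's $L^\infty$-bilinear fixed point gives the needed bound on $w$ directly and cheaply. In exchange, your approach is more transparent about \emph{why} the effect is $t^2/N^2$ (viscous drift of a steady Euler flow with $x_3$-frequency content of size $N^{-2}$), while the paper's Duhamel computation is more self-contained. Both are valid proofs of the theorem.
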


\begin{figure}[h]
\begin{center}
\includegraphics[scale=.4]{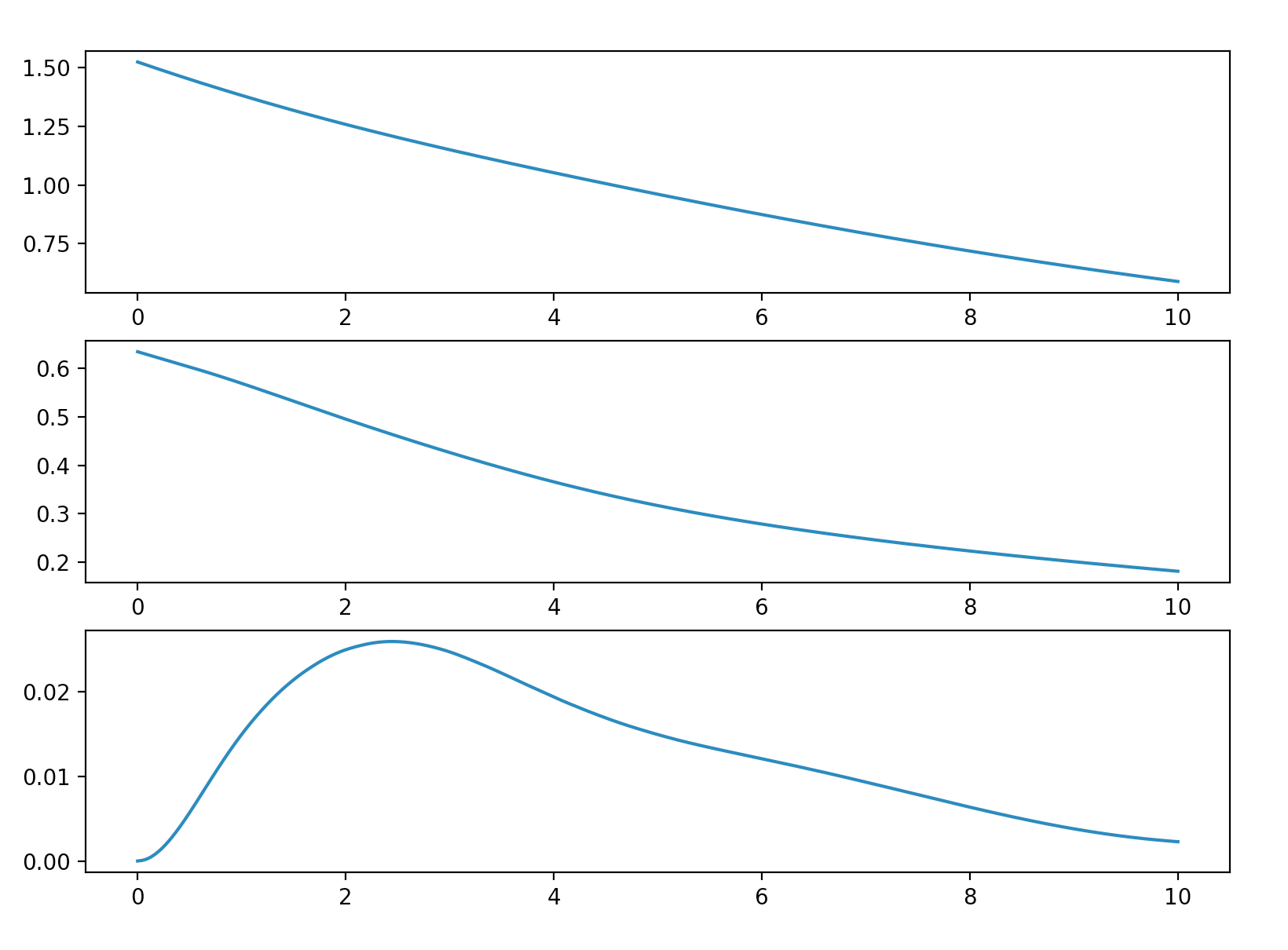}
\caption{ 
Simulation of the Navier-Stokes equations with viscosity $\nu=10^{-1}$ and initial data from Theorem \ref{Th-2} taking $N=1.1$. From top to bottom: $\|u^1(\cdot,t)\|_{L^1(\T^3)}$, $\|u^2(\cdot,t)\|_{L^1(\T^3)}$ and $\|u^3(\cdot,t)\|_{L^1(\T^3)}$. This simulation shows breaking despite initial pressure favorable (see Footnote \ref{foot.favorable} for a definition) 
to symmetry preservation. Choice of parameters: total time $T=10$ and time step $dt=10^{-2}$; spectral code by Mikael Mortensen taken from \texttt{https://github.com/spectralDNS/spectralDNS} with $(2^5)^3$ mesh points.}
\label{fig.ourdata}
\end{center}
\end{figure}

\begin{remark}[comparison to other symmetry breaking results]
The non-uniqueness numerical results of Guillod and {{\v{S}}ver{\'a}k} \cite{GS17} concern Leray-Hopf solutions of the Navier-Stokes equations that break a symmetry class. In the context of the Euler equations and convex integration, symmetry breaking and restoration mechanisms were explicitly investigated in \cite{bardos2013stability}. The non-uniqueness results for dissipative solutions of Euler by Scheffer \cite{Sch93}, Shnirelman \cite{Shni97}, De Lellis and Sz\'{e}kelyhidi \cite{DLS09}, Isett \cite{isett2018proof} and for weak solutions of Navier-Stokes by Buckmaster and Vicol \cite{BV19} can also be seen as symmetry breaking results. Our results are in a different vein though. We show breaking of symmetry on some time interval where the solution is unique and smooth.
\end{remark}
{\begin{remark}[isotropic motion with initial third component zero]
For the construction in Theorem \ref{Th-3bis}, one can also show that for all $\mathbf{e}\in\mathbb{R}^3$ with $|\mathbf{e}|=1$:
$$\|u(T,\cdot)\cdot \mathbf{e}\|_{\dot{B}^{-1}_{\infty,\infty}}\sim\frac{1}{\delta}. $$
Thus at time $T$, the velocity field is comparable in all unit directions with respect to the $\dot{B}^{-1}_{\infty,\infty}$ norm, despite evolving from initial data with zero third component.
\end{remark}}

\subsection*{Further results}

Our interest here is on flows that preserve the symmetry $u_3=0$ and on applications of such flows. 

There are indeed flows solving Navier-Stokes and Euler that have a shear-flow structure and keep the third component identically zero, as for instance the plane parallel channel flows $(u^1(x_3,t), u^{2}(x_1,x_3,t),0)$ introduced by Wang \cite{wang2001kato}. Rotating these flows gives rise to a whole family of symmetry preserving pressureless shear flows that we dub `2.75D shear flows', which are defined as follows. 
Let  $\lambda\in \mathbb{Z}$ be a constant. Consider the initial data
\begin{align}\label{3D-nu-data}
u_{\rm in}=\big(f(\lambda x_1+x_2, x_3), \,-\lambda f(\lambda x_1+x_2, x_3)-g(x_3), 0 \big),\quad x\in \T^3.
\end{align}
In fact, in the case that $u_{\rm in}$ merely belongs to $L^2(\T^3)$
(that corresponds to $f$ or $ g$ being rough),  one can obtain a Leray-Hopf weak solution to the problem \eqref{NS} with $\nu>0$, and initial data $u_{\rm in}$, given by
\begin{align}\label{3D-nu-solu}
u^\nu=\big(F_\nu(t, \lambda x_1+x_2, x_3), -\lambda F_\nu(t, \lambda x_1+x_2, x_3)-e^{\nu t\partial_{3}^2}g(x_3), 0\big)
\end{align}
where $F_{\nu}: \R_+\times \T^2 \to  \R$  is the unique global-in-time solution to
 \begin{equation}\label{DDheat}
\left\{\begin{aligned}
  &\partial_t F_{\nu}  -e^{\nu t\partial_{2}^2}g (y_2)\,\partial_1 F_{\nu}  = \nu\bigl((\lambda^2+1)\partial_{1}^2+ \partial_{2}^2\bigr) F_{\nu} \quad {\rm in} ~~\T^2\times\R_+ \\
 & F_{\nu} (0, y_1, y_2)=f(y_1, y_2).
\end{aligned}\right.
\end{equation}
For more insights about the derivation of these flows, we refer to Appendix \ref{S3a}.

\begin{remark}[2.75D shear flows for Euler]\label{rem.2.75Euler}
One also has `2.75D shear flows' that solve the Euler equations in a distributional sense:
\begin{equation}\label{3D-Euler-solu}
u^{E}(t, x)=\big(f(\lambda x_1+x_2+tg(x_3), x_3), -\lambda f(\lambda x_1+x_2+tg(x_3), x_3)-g(x_3), 0\big),
\end{equation}
where $f\in L^2(\T^3)$ and $g\in C^\infty(\T^2)$.
\end{remark}

\subsubsection*{Inviscid damping}

In Section \ref{S3-s} we show that the 2.75D shear flows for Euler, see Remark \ref{rem.2.75Euler}, inviscidly damp to the \textit{Kolmogorov flow} $u^{K}=(0,\sin x_3,0)$. The Kolmogorov flow is a stationary solution of the 3D Euler equations in $\mathbb{T}^3$. In \cite{CEW} Coti-Zelati, Elgindi and Widmayer exhibit 2D stationary solutions to the Euler equations near $u^{K}$, thus demonstrating a lack of inviscid damping near $u^{K}$. On the other hand, 2.75D shear Euler flows \eqref{3D-Euler-solu} can be used to produce explicit solutions that inviscidly damp\footnote{We thank Hao Jia for this observation.} to $u^{K}$ for large times. This and \cite{CEW} show that dynamics near the Kolmogorov flow in $\mathbb{T}^3$ are rich and no generic behavior can be expected. 

\subsubsection*{Vanishing viscosity}

In Section \ref{s-2.75limit} we investigate the vanishing viscosity limit for 2.75D shear flow solutions of Navier-Stokes that are Onsager supercritical. Turbulence theory  from \cite{K1,K2,K3} predicts that if $u^{\nu}$ is a weak Leray-Hopf solution in $\mathbb{T}^3\times (0,\infty)$, with viscosity $\nu$ and initial data $u_{\rm in}$, then generically one has \textit{anomalous dissipation}:
\begin{equation}\label{anondiss}
\liminf_{\nu\rightarrow 0} \nu\int\limits\limits_{0}^{T}\int\limits\limits_{\mathbb{T}^3} |\nabla u^{\nu}|^2 dxds>0.
\end{equation}
It is known that if \eqref{anondiss} and the vanishing viscosity limit holds in suitable topology, then the corresponding Euler flow $u^{E}$ must belong to \textit{Onsager supercritical spaces} such as $C^{\frac{1}{3}-}$ or $\dot{H}^{\frac{5}{6}-}$. See \cite{CET} and \cite{CCFS}, for example. 

Using 2.75D shear flows for the Navier-Stokes and Euler equations, we show in Propositions \ref{Th-4} and \ref{Th-5} that the vanishing viscosity limit and the corresponding Euler flow belonging to \textit{Onsager supercritical spaces} are not sufficient conditions for \textit{anomalous dissipation}. Moreover in Proposition \ref{Th-4}, we build upon the work of \cite{BT} to give an example of a rough solution to the 3D Euler equations that satisfies the local energy balance.

\subsection{Heuristics for strong symmetry breaking}
\label{sec.heur}

In this subsection, we give some heuristics for Theorem \ref{Th-3}.

The mechanism to get norm inflation in the critical $\dot B^{-1}_{\infty,\infty}$ space is well understood thanks to the work of Bourgain and Pavlovi\'{c} \cite{BP08}, and later Yoneda \cite{Y10} and Cheskidov and Dai \cite{CD14}. We mention here also the work of Wang \cite{Wa15}, which demonstrates norm inflation phenomena in the spaces  $\dot{B}^{-1}_{\infty, q}$ for $1\leq q\leq 2$, but the construction is different from the one considered here. 

Our point here is to explain how to get {norm inflation on the third component (3rdNI), starting from data with third component equal to zero as in the case of Theorem \ref{Th-3}}. Such {norm inflation on the third component} cannot be obtained from the previously known constructions.

As a starting point, let us consider the general plane-wave initial data 
\begin{equation}\label{e.definitialdataheur}
\kappa(r)\sum_{j=1}^rA_j\big(\mathbf{v}\cos(\mathbf{k_j}\cdot x)+\mathbf{v'}\cos(\mathbf{k_j'}\cdot x)\big).
\end{equation}
Here $\mathbf{v},\, \mathbf{v'}\in\R^3$ are fixed constant vectors, $\kappa(r)$ is some function such that $\kappa(r)\rightarrow 0$ when $r\rightarrow\infty$,  $A_j\in[0,\infty)$ is a sequence of amplitudes growing geometrically, and $k_j\, k_j'\in\R^3$ are two sequences of vectors whose magnitudes grow at a geometric rate. 
Hence the initial data given by \eqref{e.definitialdataheur} is a superposition of highly oscillating plane waves. 
This data covers the situations studied in \cite{BP08,Y10,CD14}. In all these studies, $\kappa(r)$ and the sequence $A_j$ need to be finely tuned in order to produce a small $\dot B^{-1}_{\infty,\infty}$ norm at initial time but a large one after an arbitrarily short time. 

We now describe the geometric constraints that we put on the vectors $\mathbf{v}$, $\mathbf{v'}$, $\mathbf{k_j}$ and $\mathbf{k_j'}$. There are two obvious conditions. First, in order to satisfy the divergence-free condition, we impose
\begin{equation}\label{e.condini1}\tag{{3rdNI Condition 1}}
    \mathbf v\cdot\mathbf k_j=0=\mathbf v'\cdot \mathbf k_j'.
\end{equation}
Second, in order to have vertical velocity zero initially, we impose 
\begin{equation}\label{e.condini2}\tag{{3rdNI Condition 2}}
    \mathbf{v}\cdot\mathbf{e}_3=\mathbf{v'}\cdot\mathbf{e}_3=0
\end{equation}
where $\mathbf e_3$ is the third vector of the canonical basis of $\R^3$. 
In order to produce norm inflation in $\dot B^{-1}_{\infty,\infty}$ from this superposition of highly oscillating plane waves, one needs {to} produce a non-oscillating function from the interaction of the term oscillating with wavenumber $\mathbf{k_j}$ {and the} term oscillating with wavenumber $\mathbf{k_j'}$. Hence, following \cite{BP08,Y10,CD14}, we impose that there exists a fixed constant vector $\boldsymbol\eta\in\R^3$ such that 
\begin{equation}\label{e.condini3}\tag{{3rdNI Condition 3}}
    \mathbf{k_j}-\mathbf{k_j'}=\boldsymbol\eta.
\end{equation}
The norm inflation mechanism can be seen as a backward energy cascade, producing large-scale, non-oscillating, structures from small-scale, highly oscillating, structures.

We now investigate the conditions needed to get {norm inflation of the third component}. A computation of the second Duhamel iterate leads to the following inflation term 
\begin{equation}\label{e.norminflationterm}
    \kappa(r)^2\sum_{j=1}^r\Big(\int\limits_0^te^{-(|\mathbf{k_j}|^2+|\mathbf{k_j'}|^2)(t-s)}ds\Big)\mathbb P\big(\sin(\boldsymbol\eta\cdot x)((\mathbf{v}\cdot \mathbf{k_i'})\mathbf{v'}-(\mathbf{v'}\cdot \mathbf{k_j})\mathbf{v}\big).
\end{equation}
Notice that the third component of 
\begin{equation}\label{e.argumentdivergence}
\sin(\boldsymbol\eta\cdot x)((\mathbf{v}\cdot \mathbf{k_i'})\mathbf{v'}-(\mathbf{v'}\cdot \mathbf{k_j})\mathbf{v})
\end{equation}
is zero. 
Hence, in order to get {norm inflation on the third component}, one needs the quantity in \eqref{e.argumentdivergence} to have a non-zero divergence, which will impose further constraints on $\mathbf{k_i},\ \mathbf{k_i'},\ \mathbf{v},\ \mathbf{v'}$ and $\boldsymbol\eta$. This is in stark contrast with previous studies \cite{BP08,Y10,CD14}, where the quantity in \eqref{e.argumentdivergence} is divergence-free and hence the norm inflation term {remains in the span of $\mathbf{v}$ and $\mathbf{v'}$}.

Computing the Helmholtz-Leray projection in the norm inflation term \eqref{e.norminflationterm} we get
\begin{equation}\label{inflationterm}
\begin{split}
   &\mathbb P\big(\sin(\boldsymbol\eta\cdot x)((\mathbf{v}\cdot \mathbf{k_j'})\mathbf{v'}-(\mathbf{v'}\cdot \mathbf{k_j})\mathbf{v}\big) \\
   &=\sin(\boldsymbol\eta\cdot x)\Big((\mathbf{v}\cdot \mathbf{k_j'})\mathbf{v'}-(\mathbf{v'}\cdot \mathbf{k_j})\mathbf{v}-\frac{\boldsymbol\eta}{|\boldsymbol\eta|^2}\big((\mathbf v\cdot \mathbf{k_j'})(\mathbf{v'}\cdot\boldsymbol\eta)-(\mathbf{v'}\cdot \mathbf{k_j})(\mathbf v\cdot\boldsymbol\eta)\big)\Big).
\end{split}
\end{equation}
Therefore, we need 
\begin{equation}\label{e.condini4}\tag{{3rdNI Condition 4}}
    \boldsymbol\eta\cdot \mathbf{e}_3\neq 0
\end{equation}
and 
\begin{equation*}
(\mathbf v\cdot \mathbf{k_j'})(\mathbf{v'}\cdot\boldsymbol\eta)-(\mathbf{v'}\cdot \mathbf{k_j})(\mathbf v\cdot\boldsymbol\eta)\neq 0
\end{equation*}
in order to have norm inflation on the third component of the velocity. Using \eqref{e.condini1} we can rewrite the last condition as
\begin{equation*}
    (\mathbf v\cdot \mathbf{k_j'})(\mathbf{v'}\cdot\mathbf{k_j})\neq 0
\end{equation*}
i.e. 
\begin{equation}\label{e.condini5}\tag{{3rdNI Condition 5}}
\mathbf v\cdot \mathbf{k_j'}\neq 0\quad\mbox{and}\quad \mathbf v'\cdot \mathbf{k_j}\neq 0.
\end{equation}
Notice that conditions \eqref{e.condini1}-\eqref{e.condini5} are necessary but also sufficient to have {norm inflation on the third component}. 
There are many possible choices within the constraints \eqref{e.condini1}-\eqref{e.condini5}. In particular, taking 
$$
\mathbf{v}=(1,-\lambda,0)\quad\mbox{and}\quad\mathbf{k_j}=(\lambda,1,2^{j-1}K)
$$
for $\lambda,\, K\in\Z$, one has a whole family of initial data with $u_{\rm in}^3=0$ that produces {norm inflation on the third component} in $\dot B^{-1}_{\infty,\infty}$ and such that $u_{\rm in}$ is close in $\dot B^{-1}_{\infty,\infty}$ to a 2.75D shear flow initial data defined in \eqref{3D-nu-data}.

\subsection{{Heuristics for strong isotropic symmetry breaking}}
\label{sec.heuriso}
{From the previous subsection, notice that for initial data of the form $$ \kappa(r)\sum_{j=1}^rA_j\big(\mathbf{v}\cos(\mathbf{k_j}\cdot x)+\mathbf{v'}\cos(\mathbf{k_j'}\cdot x)\big)$$ that satisfies \eqref{e.condini1}-\eqref{e.condini5}, the associated inflation term \eqref{inflationterm} vanishes in the direction $\boldsymbol{\eta}$. Here
\begin{equation}\label{freqdifference}
\mathbf{k_j}-\mathbf{k_j'}=\boldsymbol{\eta}\quad\textrm{for}\,\textrm{all}\,j.
\end{equation}
This represents a block in using such initial data for obtaining the isotropic norm inflation \eqref{e.sini} in Theorem \ref{Th-3bis}.

To overcome this, we instead take initial data of the form $$\kappa(r)\sum_{j=1}^{r^3}A_j\big(\mathbf{v_{j}}\cos(\mathbf{k_j}\cdot x)+\mathbf{v_{j}'}\cos(\mathbf{k_j'}\cdot x)\big) $$ 
\begin{equation}\label{freqdifferencevary}
\textrm{with}\quad\mathbf{k_j}-\mathbf{k_j'}=\boldsymbol{\eta_{j}}.
\end{equation}
Here, $\mathbf{v_{j}}$ and $\mathbf{v_{j}'}$ vary with $j$ and crucially the low frequency vector $\boldsymbol{\eta_{j}}$ points in different directions depending on the index $j$. Specifically, we glue higher frequency terms to the initial data in Theorem \ref{Th-3}, such that the added terms $\mathbf{v_{j}}$,$\mathbf{v_{j}'}$ and $\boldsymbol{\eta_{j}}$ point in different directions depending on $j$.

 The initial data we design in Theorem \ref{Th-3bis} can be decomposed into three pieces $u_{\rm in}=u_{\rm in}^{(1)}+u_{\rm in}^{(2)}+u_{\rm in}^{(3)}$ such that
 \begin{itemize}
 \item  each $u_{\rm in}^{(1)}$-$u_{\rm in}^{(3)} $  separately generate an associated Navier-Stokes solution with a norm inflation term, with each of these norm inflation terms being of comparable size in $\dot{B}^{-1}_{\infty,\infty}$,
 \medskip
 \item the norm inflation term associated to $u_{\rm in}$ is the sum of the norm inflation terms associated to $u_{\rm in}^{(1)}$-$u_{\rm in}^{(3)} $. 
 \end{itemize}
 Careful choices of $\mathbf{v_{j}}$, $\mathbf{v_{j}'}$, $\mathbf{k_{j}}$ and $\mathbf{k_{j}'}$ then give that the norm inflation terms associated to $u_{\rm in}^{(1)}$-$u_{\rm in}^{(3)} $ point in linearly independent directions. This,  together with our choices of $\mathbf{v_{j}}$, $\mathbf{v_{j}'}$, $\mathbf{k_{j}}$ and $\mathbf{k_{j}'}$ and the fact that $\dot{B}^{-1}_{\infty,\infty}$ is an $L^{\infty}$-based space, enable us to show that for any fixed unit vector $\mathbf{e}$
 \begin{itemize}
 \item[](i)  the dot product of $\mathbf{e}$ with at least one of the norm inflation terms associated to $u_{\rm in}^{(1)}$-$u_{\rm in}^{(3)} $ has a $\dot{B}^{-1}_{\infty,\infty}$ norm with a large lower bound,
 \medskip
 \item[](ii) the lower bound in (i) also serves as a lower bound for the $\dot{B}^{-1}_{\infty,\infty}$ norm of the dot product of $\mathbf{e}$ with the norm inflation term associated to $u_{\rm in}$.
 \end{itemize}
 These features then imply that $u_{\rm in}$ generates a norm inflation term that has large $\dot{B}^{-1}_{\infty,\infty}$ norm in all unit directions. This in turn leads to the results described in Theorem \ref{Th-3bis}.}

\subsection{Heuristics for symmetry breaking despite favorable pressure gradient}
\label{sec.heurfavor}

In this subsection, we give some heuristics for Theorem \ref{Th-2}.

Let us first explain how we design the initial data. Our objective is to find an initial data that will generate symmetry breaking despite favorable initial pressure (see Footnote \ref{foot.favorable}). The two fractions in $u_{\rm in}$ are there to fulfill the condition $\partial_3 P_{\rm in}\equiv 0$, where $P_{\rm in}$ is defined by \eqref{presideqn}. We also remark that the order in $t$ in \eqref{e.loweru3} is expected because $\partial_3 P_{\rm in}=0$ at $t=0$ formally implies that $\partial_tu_3=0$ at $t=0$. The breaking is not driven by the vertical derivative of the pressure at the initial time, as is the case in Theorem \ref{Th-3} and for the Taylor-Green vortex, see Figure \ref{fig.TG}.

In our proof, the condition $N>N_0$ appears for technical reasons in order to identify the leading order term. Indeed, for $N$ large, the term involving $S_1$ is the dominant term in the right hand side of \eqref{es-u23}. Notice also that the larger the $N$, the closer our data is from the two-dimensional data $(\cos x_2,\cos x_1,0)$ that generates a unique global two-dimensional solution to 3D Navier-Stokes. This has two implications. First, one sees, that $u_{\rm in}$ generates a unique 
 solution to the 3D Navier-Stokes equations on $\T^3\times[0,1]$ for $N$ large. Second, the larger the $N$, the weaker the symmetry breaking effect should be. This observation is consistent with the bound $O(t^2/N^2)$ in \eqref{e.loweru3}.

\begin{remark}[on the condition $N\gg 1$ in Theorem \ref{Th-2}]
Figure \ref{fig.ourdata} shows a simulation of the Navier-Stokes equations with initial data from Theorem \ref{Th-2} taking $N=1.1$. The graph shows that symmetry breaking happens in spite of the fact that $N$ is taken small. Therefore we expect that the result of Theorem \ref{Th-2} remains true for $1<N\leq N_0$. 
\end{remark}

\subsection{Outline of the paper}

{Section \ref{S4} is devoted to the proof of strong symmetry breaking, namely Theorem \ref{Th-3} (see Subsection \ref{S4A}) and to the proof of strong isotropic symmetry breaking, namely Theorem \ref{Th-3bis} (see Subsection \ref{S4Abis}).} Section \ref{ill} addresses the proof of Theorem \ref{Th-2}, i.e. symmetry breaking despite pressure favorable to symmetry preservation. The last part of the paper, Section \ref{S3} is concerned with some applications of the 2.75D shear flows, which are symmetry preserving shear flows. This section contains two types of results. First, in Section \ref{S3-s} we investigate inviscid damping effects for 2.75D shear flow solutions of Euler. Second, in Section \ref{s-2.75limit} we study Onsager supercritical inviscid limits of 2.75D shear flows. Finally in Appendix \ref{S3a}, we give another perspective on the derivation of 2.75D shear flows.

\subsection{Notations and preliminary results}

We begin this  section by introducing relevant notation. 
We denote by $C$ positive numerical constants that may change from one line to the other, and we sometimes write $A\lesssim B$ instead of $A\leq C B.$ Likewise,    $A\sim B$ means that  $C_1 B\leq A\leq C_2 B$ with absolute constants $C_1$, $C_2$.  Throughout 
the paper, $i$-th coordinate $(i=1,2,3)$ of a vector $v$ will be denoted by  $v^i,$ and   horizontal component of $v$ will be denoted by $v^{\rm h}.$ For a real-valued matrix $\mathcal{M}$, $\mathcal{M}^{\rm T}$ represents its transpose, while for two multidimensional real-valued matrices $\mathcal{M}_1, \mathcal{M}_2,$ $\mathcal{M}_1:\mathcal{M}_2$ denotes their standard inner product.
 For $X$ a Banach space, $p\in[1, \infty]$ and $T\in(0,\infty]$, the notation $L^p(0, T; X)$ or $L^p_T(X)$ stands for the set of measurable functions $f: [0, T]\to X$ with $t\mapsto\|f(t)\|_X$ in $L^p(0, T)$, endowed with the norm $\|\cdot\|_{L^p_{T}(X)} :=\|\|\cdot\|_X\|_{L^p(0, T)}.$ We  keep the same notation for functions with several components.     

We recall that the Besov spaces $\dot{B}^{-2\sigma}_{\infty,\infty}$ (with $\sigma>0$) is equipped with the norm
\begin{align*}
\|v(\cdot)\|_{\dot{B}^{-2\sigma}_{\infty,\infty}}=\bigl{\|}  \|s^{\sigma} e^{s\Delta} v(\cdot)\|_{L^\infty}\bigr{\|}_{L^\infty(\R_+)}.
\end{align*}
Note also that one has the embedding 
\begin{equation}\label{e.embed}
L^3(\T^3)
\hookrightarrow \dot B^{-1}_{\infty,\infty}(\T^3)
\end{equation}
for mean-free functions on the torus.\footnote{Let us give a short proof of this embedding. For a mean-free function $v$ in $L^3(\T^3)$, for $s>1$,
\begin{align*}
\Big\|\sum_{\xi\in\Z^3\setminus\{0\}}e^{-s|\xi|^2}e^{ix\cdot\xi}\hat v(\xi)\Big\|_{L^\infty(\T^3)}
= &
\Big\|\sum_{\xi\in\Z^3\setminus\{0\}}s|\xi|^2e^{-s|\xi|^2}e^{ix\cdot\xi}\frac{\hat v(\xi)}{s|\xi|^2}\Big\|_{L^\infty(\T^3)}\\
\leq &
\frac1s\Big(\sum_{\xi\in\Z^3\setminus\{0\}}\frac1{|\xi|^4}\Big)^\frac12\Big(\sum_{\xi\in\Z^3\setminus\{0\}}|\hat v(\xi)|^2\Big)^\frac12\\
\leq &
\frac{C}{s}\|v\|_{L^2(\T^3)}\leq 
\frac{C}{s}\|v\|_{L^3(\T^3)},
\end{align*}
where $C\in(0,\infty)$ is a universal constant. 
Notice that we used that the function $xe^{-x}$ is bounded on $\R$. For $s\in(0,1]$, we rely on the result of Maekawa and Terasawa \cite{maekawa2006} for instance.} 
As is usual, we define the bilinear operator
\begin{align*}
\mathcal{B}(u, v)(t, x):= -\int\limits_0^t e^{(t-\tau)\Delta}\mathcal{P}\bigl(u \cdot\nabla v\bigr)(\tau, x) \,d\tau
\end{align*}
with $\mathcal{P}$ the projection on divergence-free vector fields (the so-called Leray projection). 

We need the following  obvious estimates for the one-dimensional heat kernel  
\begin{equation}\label{e.heat1d}
\mathcal{K}(t, x_3):=\frac{1}{\sqrt{4\pi t }}e^\frac{-|x_3|^2}{4t }, \quad\forall ~(t, x_3)\in \R_+\times \T.
\end{equation}
\begin{lemma}\label{Le-heat}
 Let $g\in {C}^\infty( \T),$ then for any $s\in \R_+,$ one has
\begin{align*}
\|   (\mathcal{K}\star g)(s, \cdot) \|_{L^\infty( \T)}\leq \|g\|_{L^\infty(  \T)}
\end{align*}
and
\begin{align*}
\|  (\mathcal{K}\star g)(s, \cdot) -g(\cdot)\|_{L^\infty( \T)}\leq  s\, \|  g''\|_{L^\infty(  \T)}.
\end{align*}
\end{lemma}

Finally, we state a standard absorbing lemma which is useful for our proofs.
 \begin{lemma}\label{Le-absorb}
 Suppose that $y: [0, T]\to [0, \infty)$ is continuous and satisfies $y(0)=0.$ Furthermore suppose that for all $t\in[0, T],$ $y$ satisfies the following inequality:
 \begin{equation*}
 \sup_{s\in[0, t]} y(s)\leq a\Big(\sup_{s\in[0, t]} y(s)\Big)^2+b\sup_{s\in[0, t]} y(s)+c, 
 \end{equation*}
 with $a, b, c>0$ and $b+2ac<\frac{1}{4}.$  Then we conclude that
 \begin{equation*}
 \sup_{s\in[0,T]} y(s)<2c.
 \end{equation*}
 \end{lemma}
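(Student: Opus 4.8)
\textbf{Proof plan for Lemma \ref{Le-absorb}.}

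The plan is to argue by a continuity/bootstrap argument on the function $m(t):=\sup_{s\in[0,t]}y(s)$. Since $y$ is continuous on $[0,T]$ and $y(0)=0$, the function $m$ is continuous, non-decreasing, and $m(0)=0$. The hypothesis reads $m(t)\le a\,m(t)^2+b\,m(t)+c$ for all $t\in[0,T]$, i.e. the quadratic
\begin{equation*}
Q(X):=aX^2-(1-b)X+c
\end{equation*}
satisfies $Q(m(t))\ge 0$ for every $t\in[0,T]$. First I would analyze the roots of $Q$. Under the smallness assumption $b+2ac<\tfrac14$ one has in particular $b<\tfrac14<1$, so $1-b>0$, and the discriminant $(1-b)^2-4ac$ is positive: indeed $4ac<2(b+2ac)<\tfrac12<(1-b)^2$ whenever $b\le \tfrac14$ (using $(1-b)^2\ge(3/4)^2=9/16>1/2$). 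Hence $Q$ has two distinct positive roots
\begin{equation*}
X_\pm=\frac{(1-b)\pm\sqrt{(1-b)^2-4ac}}{2a},
\end{equation*}
and $Q(X)\ge0$ exactly on $[0,X_-]\cup[X_+,\infty)$, while $Q(X)<0$ on $(X_-,X_+)$.

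Next I would show the quantitative bound $X_-<2c$. Using $\sqrt{1-\theta}\ge 1-\theta$ for $\theta\in[0,1]$ with $\theta=4ac/(1-b)^2$, we get
\begin{equation*}
\sqrt{(1-b)^2-4ac}\ge (1-b)-\frac{4ac}{1-b},
\end{equation*}
so that
\begin{equation*}
X_-=\frac{2c}{(1-b)+\sqrt{(1-b)^2-4ac}}\le \frac{2c}{(1-b)+(1-b)-\frac{4ac}{1-b}}=\frac{2c}{2(1-b)-\frac{4ac}{1-b}}.
\end{equation*}
The denominator satisfies $2(1-b)-\frac{4ac}{1-b}>2(1-b)-\frac{4ac}{3/4}=2(1-b)-\tfrac{16}{3}ac$, and since $b+2ac<\tfrac14$ gives $b<\tfrac14$ and $ac<\tfrac18$, a direct estimate shows this denominator exceeds $1$; hence $X_-<2c$. (One can also simply note $X_-\le \frac{2c}{1-b}\le \frac{2c}{3/4}$ is not quite sharp enough, so the refined inequality above, or equivalently $X_-=c/(aX_+)$ combined with a lower bound on $X_+$, is what is needed; I would carry out whichever of these is cleanest.)

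Finally, the bootstrap: consider the set $S=\{t\in[0,T]:m(t)\le X_-\}$. It is nonempty since $m(0)=0\le X_-$, and closed by continuity of $m$. I claim it is open in $[0,T]$: if $m(t_0)\le X_-$, then in fact $m(t_0)\notin(X_-,X_+)$ forces either $m(t_0)\le X_-$ (the case at hand) and, since $m$ is continuous, $m$ stays below $X_- +\epsilon<X_+$ for $t$ near $t_0$; but $Q(m(t))\ge0$ then forbids $m(t)\in(X_-,X_+)$, so $m(t)\le X_-$ for such $t$. Hence $S=[0,T]$, giving $\sup_{s\in[0,T]}y(s)=m(T)\le X_-<2c$, which is the claim. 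The only mildly delicate point is the quantitative step $X_-<2c$; everything else is a routine connectedness argument, and the continuity of $m$ together with $m(0)=0$ is exactly what prevents $m$ from ever jumping past the gap $(X_-,X_+)$.
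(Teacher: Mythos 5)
Your argument is correct, and since the paper states Lemma~\ref{Le-absorb} without proof (it is described as a ``standard absorbing lemma''), there is no in-text proof to compare against; what you wrote is exactly the standard continuity/connectedness argument one expects. The structure is sound: $m(t):=\sup_{s\in[0,t]}y(s)$ is continuous, non-decreasing, and starts at $0$; the hypothesis places $m(t)$ outside the open interval $(X_-,X_+)$ between the two real roots of $Q(X)=aX^2-(1-b)X+c$; the intermediate value theorem then traps $m$ below $X_-$; and the smallness condition on $b+2ac$ guarantees both that the roots are real and distinct and that $X_-<2c$.

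One remark on presentation: your verification of $X_-<2c$ via $\sqrt{1-\theta}\ge 1-\theta$ is correct but more circuitous than necessary (and the ``direct estimate'' you invoke to show the denominator exceeds $1$ requires using the joint constraint $b+2ac<\tfrac14$ rather than $b<\tfrac14$ and $ac<\tfrac18$ separately, since the latter only gives a lower bound of $5/6$). The cleanest route is to observe directly that
\begin{equation*}
X_-=\frac{2c}{(1-b)+\sqrt{(1-b)^2-4ac}}<2c
\ \Longleftrightarrow\
\sqrt{(1-b)^2-4ac}>b
\ \Longleftrightarrow\
b+2ac<\tfrac12,
\end{equation*}
the last step obtained by squaring (both sides being positive). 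The hypothesis $b+2ac<\tfrac14$ is then more than enough, and this also makes transparent that $\tfrac14$ in the lemma could be relaxed to $\tfrac12$.
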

 
 \section{Strong symmetry breaking}
 \label{S4}

\subsection{{Proof of Theorem \ref{Th-3}}}
\label{S4A}
In this section, our objective is to prove Theorem \ref{Th-3}. We  investigate the growth of the vertical  velocity  for the three-dimensional  Navier-Stokes problem \eqref{NS}  supplemented with  initial data  $u_{\rm in}$  that is close in the critical Besov spaces $\dot{B}^{-1}_{\infty, \infty}$ to initial data considered in \eqref{3D-nu-data}. For a heuristic description of the growth mechanism with a focus on how to produce {third component norm inflation} from anisotropic initial data, we refer to \eqref{sec.heur}. 
 We proceed in three steps.

\subsection*{Step 1: choice of the initial data}
Let $\Gamma_1, \Gamma_2: \mathbb{N}\mapsto \R$ be such that 
 \begin{align}\label{e.defGamma}
     \Gamma_1(m):= \sum_{j=1}^m \frac{1}{j}\quad{\rm and} \quad \Gamma_2(m):=  \Gamma_1^{\frac13}(m)\quad {\rm for }~m\in \mathbb{N}.
 \end{align}
  Let $r$ be a large integer  (to be specified later).   We  set    initial data  $u_{\rm in}$ and $\bar u_{\rm in}$ as follows:\footnote{We emphasize that $k_j$ is a scalar. Comparing the data to \eqref{e.definitialdataheur}, we see that here $\kappa(r):=\frac1{\Gamma_2(r)}$, $A_j=\frac{k_j}{\sqrt{j}}$, $\mathbf{k_j}=(1,1,k_j)$ and $\mathbf{k_j'}=(0,-1,k_j+1)$. Note also that $u_{\rm in}$ has a large norm in ${BMO^{-1}}(\R^3)$.}
\begin{align*} 
 u_{\rm in}&{=} \frac{1}{\Gamma_2{(r)}}  \sum_{j=1}^r \frac{k_j} {\sqrt{j}}\big(\mathbf{v} ~  {\cos (  x_1+  x_2 +k_jx_3) } 
          + \mathbf{v'} ~ {\cos( - x_2 +(k_j+1)x_3}  )\big),\\
 \bar u_{\rm in}&{=} \frac{1}{\Gamma_2{(r)}}  \sum_{j=1}^r \frac{k_j} {\sqrt{j}}~ \mathbf{v} ~  {\cos (  x_1+   x_2 +k_jx_3) },
\end{align*}
where $\mathbf{v} =\left(1, -1, 0\right), ~\mathbf{v'}=\left(1, 0, 0\right)$ are vectors and we define the sequence $k_j=2^{3j}T^{-\frac{1}{2}}$ ($j=1, \cdots, r$). The existence time $0<T<1$ is to be determined in terms of $r$.

Obviously, $\bar u_{\rm in}$ has the structure \eqref{3D-nu-data} by taking  
$$
\lambda=1,\quad f(y_1, y_2)=\frac{1}{\Gamma_2{(r)}}  \sum_{j=1}^r \frac{k_j} {\sqrt{j}}~ \mathbf{v} ~  {\cos \big(  y_1 +k_j y_2\big) }, \quad g=0.
$$
Thus the vertical velocity of the corresponding 2.75D shear flow remains identically zero for all positive time. 

Notice  that 
 \begin{align*}
u_{\rm in}-\bar u_{\rm in} &=  \frac{1}{\Gamma_2{(r)}}  \sum_{j=1}^r \frac{k_j} {\sqrt{j}} ~ \mathbf{v'}~ {\cos(-x_2+ ({k}_j +1)x_3) },\\
e^{t\Delta}(u_{\rm in}-\bar u_{\rm in})(x)&=\frac{1}{\Gamma_2{(r)}}  \sum_{j=1}^r \frac{k_j} {\sqrt{j}} ~ \mathbf{v'}~ {\cos(-x_2+ ({k}_j+1) x_3)~e^{-((k_j+1)^2+1) t} }
\end{align*}
and   for appropriate $r,$   we have
\begin{align*}
\| u_{\rm in}-\bar u_{\rm in}\|_{\dot{B}^{-1}_{\infty, \infty}}&\leq \frac{1}{\Gamma_2{(r)}}\sup_{s>0}\Big( \sum_{j=1}^r\frac{k_j} {\sqrt{j}}  s^\frac{1}{2}  e^{-((k_j+1)^2+1) s}\Big) \\
&\lesssim \frac{1}{\Gamma_2{(r)}}\sup_{s>0}\Big( \sum_{j=1}^r {k_j}s^\frac{1}{2} e^{-k_j^2s}\Big)\lesssim \frac{1}{\Gamma_2{(r)}}=\Gamma_1^{-\frac{1}{3}}(r).
\end{align*}
In the above and in what follows, we use that series of the type $\sum_{j\in\N}{k_j}s^\frac{1}{2} e^{-k_j^2s}$ and $\sum_{j\in\N}{k_j^2}s e^{-k_j^2s}$ are uniformly bounded in $s$. This can be easily seen by splitting the sum into $\{j:\ 16^j\frac sT<1\}$ and its complement.
 
\subsection*{Step 2: analysis of  the second approximation}  Now, we analyze the second approximate solution associated with initial data $u_{\rm in}$. 
In order to do that, let us first  recall  $u_1(t, x)=e^{t\Delta} u_{\rm in}$ with
 \begin{multline}
  u_{1}(t, x){=} \frac{1}{\Gamma_2{(r)}}  \sum_{j=1}^r \frac{k_j} {\sqrt{j}} 
\Big{(}\mathbf{v}\,
          { \cos (  x_1+  x_2+k_jx_3) }\,e^{-(k_j^2+2) t} \\
         +\mathbf{v'}  \,{\cos(-x_2+ ({k}_j +1)x_3) }\,e^{-((k_j+1)^2+1) t}
          \Big{)},\label{S4-defu1}
        \end{multline}
and    $ v_2:= \mathcal{B}( u_1,   u_1)$  with  
\begin{align*} 
 \  v_2(t, x) =\frac{1}{\Gamma^2_2(r)}\sum_{i=1}^r\sum_{j=1}^r  \int\limits_0^t e^{(t-\tau)\Delta}~ \mathcal{P} U_{i, j}(\tau, x)\,d\tau,
\end{align*}
where    
\begin{align*}
U_{i, j}(\tau, x):=& \frac{k_i k_j}{\sqrt{i j}} \Big(\mathbf{v}  ~G_{i, j}^+(\tau, x)  
 +\mathbf{v'} ~G_{i, j}^-(\tau, x) \Big) 
\end{align*}
and  
\begin{align*}
     G_{i, j}^+ (\tau, x)&:=  {-}{\frac{1}{2}}\Big(\sin\big(  x_1 + 2 x_2 + (k_j-k_i-1)x_3 \big)+\sin\big( x_1 +(k_j+k_i+1) x_3\big)\Big) \\
     &\qquad  \times e^{-(  (k_j^2+(k_i+1)^2+3)\tau},\\
      G_{i, j}^- (\tau, x)&:= {-}{\frac{1}{2}}\Big(\sin\big( -x_1 -2 x_2 +(k_j-k_i+1)x_3\big)+\sin\big( x_1 +(k_j+k_i+1)x_3\big) \Big)   \\
       &\qquad  \times e^{-(  (k_j+1)^2+k_i^2+3)\tau}.
\end{align*}
We see that 
\begin{align*}
    U_{j, j}(\tau, x)
    &= {\frac{1}{2}}\frac{ k_j^2}{j}  {(\mathbf{v'}- \mathbf{v} )}~\sin\big( x_1 + 2x_2-x_3 \big) ~e^{-(  2k_j^2+2k_j+ 4)\tau} \\
    &\quad{-} {\frac{1}{2}}\frac{ k_j^2}{j} (\mathbf{v}+\mathbf{v'})~\sin (x_1 +(2k_j+1)x_3)  ~ e^{-(  2k_j^2+2k_j+ 4)\tau}\\
    & := U_{j, j}^+(\tau, x)+ U_{j, j}^-(\tau, x).
\end{align*}
So we  can decompose $ v_2$ as $v_2=  v_{2, 1}+ v_{2, 2}+  v_{2, 3},$ where
\begin{equation}\label{S4-defv}
\left\{\begin{aligned}
    &  v_{2, 1}(t, x):=\frac{1}{\Gamma^2_2(r)}\sum_{j=1}^r   \int\limits_0^t e^{(t-\tau)\Delta}~ \mathcal{P} U^+_{j, j}(\tau, x)  \,d\tau,\\
    & v_{2, 2}(t, x):=\frac{1}{\Gamma^2_2(r)}\sum_{j=1}^r    \int\limits_0^t e^{(t-\tau)\Delta}~ \mathcal{P}  U^-_{j, j}(\tau, x)\,d\tau ,\\
    &  v_{2, 3}(t, x):=\frac{1}{\Gamma^2_2(r)}\sum_{j=1}^{r}\sum_{i< j}  \int\limits_0^t e^{(t-\tau)\Delta}~ \mathcal{P}  \big(U_{i, j}(\tau, x)+ U_{j, i}(\tau, x)\big)\,d\tau.
\end{aligned}\right.
\end{equation}
Note that $  v_{2, 1}$ will be the term producing the norm inflation.

\begin{lemma}\label{S4-Le}
We have the following key estimates: 
\begin{align}
    \| v_{2, 1}(t, \cdot)\|_{L^\infty(\T^3)}\lesssim \frac{\Gamma_1(r)}{\Gamma_2^2(r)}=\Gamma_1^\frac13(r), \quad{\rm for}~t>0\label{S4-L0}
       \end{align}
       and for each components of $v_{2, 1}$ on the time interval $[T/320, T]$,
       \begin{align}
 \|v_{2, 1}^1(t, \cdot)\|_{\dot{B}^{-1}_{\infty,\infty}}= \|v_{2, 1}^2(t, \cdot)\|_{\dot{B}^{-1}_{\infty, \infty}}= \|v_{2, 1}^3(t, \cdot)\|_{\dot{B}^{-1}_{\infty, \infty}}\gtrsim  {\Gamma^{\frac13}_1(r)},
\label{S4-L1}
       \end{align}
       \begin{align}
 \|v_{2, 1}^1(t, \cdot)\|_{L^3}= \|v_{2, 1}^2(t, \cdot)\|_{L^3}= \|v_{2, 1}^3(t, \cdot)\|_{L^3}\gtrsim {\Gamma^{\frac13}_1(r)}.
\label{S4-L1'}
       \end{align}
Moreover, for $t>0$
    \begin{align}
     &\|  v_{2, 2}(t, \cdot)\|_{L^\infty(\T^3)}\lesssim \frac{1}{\Gamma_2^2(r)}=\Gamma_1^{-\frac23}(r),\label{S4-L2}\\
     & \|  v_{2, 3}(t, \cdot)\|_{L^\infty(\T^3)}\lesssim \frac{1}{\Gamma_2^2(r)}=\Gamma_1^{-\frac23}(r),\label{S4-L3}\\
      &\|u_1(t, \cdot)\|_{L^\infty(\T^3)}\lesssim \frac{1}{\sqrt{t}~\Gamma_2(r)  }= {\Gamma_1^{-\frac{1}{3}}(r)}\,t^{-\frac{1}{2}} \label{S4-L4}.
\end{align}

\end{lemma}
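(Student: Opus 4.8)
The plan is to reduce everything to two facts: that $v_{2,1}$ collapses to a \emph{single} Fourier mode after applying the Leray projection and the heat semigroup, and that $v_{2,2},v_{2,3}$ carry only \emph{high} frequencies, which the heat semigroup damps. Set $b_j:=2k_j^2+2k_j+4$ and $\boldsymbol\eta:=(1,2,-1)$, so $|\boldsymbol\eta|^2=6$ and $b_j-6=2k_j^2+2k_j-2\sim k_j^2$. Since $\mathbf v'-\mathbf v=(0,1,0)$,
\[
\mathcal P\big((0,1,0)\sin(\boldsymbol\eta\cdot x)\big)=\sin(\boldsymbol\eta\cdot x)\Big((0,1,0)-\tfrac{2}{6}(1,2,-1)\Big)=\tfrac{1}{3}(-1,1,1)\sin(\boldsymbol\eta\cdot x),
\]
while $e^{(t-\tau)\Delta}\sin(\boldsymbol\eta\cdot x)=e^{-6(t-\tau)}\sin(\boldsymbol\eta\cdot x)$. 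Inserting the formula for $U^{+}_{j,j}$ into \eqref{S4-defv} and integrating in $\tau$ gives the exact identity
\[
v_{2,1}(t,x)=c(t,r)\,(-1,1,1)\,\sin(\boldsymbol\eta\cdot x),\qquad c(t,r):=\frac{1}{6\,\Gamma_2^2(r)}\sum_{j=1}^{r}\frac{k_j^2}{j}\cdot\frac{e^{-6t}-e^{-b_jt}}{b_j-6}.
\]
Thus $|v_{2,1}^1|=|v_{2,1}^2|=|v_{2,1}^3|$ pointwise, which gives the equalities in \eqref{S4-L1}--\eqref{S4-L1'}; and since each component is a scalar multiple of the single plane wave $\sin(\boldsymbol\eta\cdot x)$ one has $\|v_{2,1}^i(t,\cdot)\|_{\dot{B}^{-1}_{\infty,\infty}}=|c(t,r)|\sup_{s>0}s^{1/2}e^{-6s}$ and $\|v_{2,1}^i(t,\cdot)\|_{L^3(\T^3)}=|c(t,r)|\,\|\sin(\boldsymbol\eta\cdot x)\|_{L^3(\T^3)}$, i.e. $|c(t,r)|$ times a fixed positive constant. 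Hence \eqref{S4-L0}--\eqref{S4-L1'} reduce to two-sided control of $c(t,r)$.

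For the upper bound one uses $0\le\frac{e^{-6t}-e^{-b_jt}}{b_j-6}\le\frac1{b_j-6}\lesssim k_j^{-2}$, so that $|c(t,r)|\lesssim\Gamma_2^{-2}(r)\sum_{j=1}^r j^{-1}=\Gamma_1(r)\Gamma_2^{-2}(r)=\Gamma_1^{1/3}(r)$ for every $t>0$; this is \eqref{S4-L0}, and it also bounds \eqref{S4-L1}--\eqref{S4-L1'} from above. For the lower bound on $t\in[T/320,T]$ one notes that there $e^{-6t}-e^{-b_jt}=e^{-6t}\big(1-e^{-(b_j-6)t}\big)\ge e^{-6}\big(1-e^{-2k_1^2T/320}\big)=e^{-6}(1-e^{-2/5})>0$, using $b_j\ge 8>6$, $(b_j-6)t\ge 2k_j^2\cdot T/320\ge 2k_1^2T/320$, $k_1^2T=64$ and $T/320\le t\le T<1$; since also $b_j-6\sim k_j^2$, each summand of $c(t,r)$ has the same sign and is $\gtrsim j^{-1}\Gamma_2^{-2}(r)$, whence $c(t,r)\gtrsim\Gamma_1(r)\Gamma_2^{-2}(r)=\Gamma_1^{1/3}(r)$. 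Together these give \eqref{S4-L1}--\eqref{S4-L1'}.

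For \eqref{S4-L2}--\eqref{S4-L3} the point is that every plane wave occurring in $U^-_{j,j}$ and in $U_{i,j}+U_{j,i}$ with $i<j$ has frequency of modulus $\gtrsim k_j$ — for the off-diagonal terms because the geometric growth $k_j=2^{3j}T^{-1/2}$ forces $k_j-k_i\ge(1-2^{-3})k_j$. Thus $e^{(t-\tau)\Delta}\mathcal P(\cdot)$ supplies a factor $e^{-ck_j^2(t-\tau)}$, and the amplitudes carry $e^{-b_j\tau}$ with $b_j\gtrsim k_j^2$; the Duhamel integral then obeys $\int_0^t e^{-ck_j^2(t-\tau)}e^{-b_j\tau}\,d\tau\lesssim e^{-ck_j^2t}k_j^{-2}$. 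So the $j$-th contribution to $\|v_{2,2}(t,\cdot)\|_{L^\infty(\T^3)}$ is $\lesssim e^{-ck_j^2t}\Gamma_2^{-2}(r)j^{-1}$ and the $(i,j)$-th contribution to $\|v_{2,3}(t,\cdot)\|_{L^\infty(\T^3)}$ is $\lesssim e^{-ck_j^2t}\Gamma_2^{-2}(r)\,\frac{k_i}{k_j}\frac1{\sqrt{ij}}$ with $\frac{k_i}{k_j}=2^{3(i-j)}$; summing over $j$ (resp. over $i<j$, absorbing the geometric factor $2^{3(i-j)}$) by the lacunary splitting used in Step~1 bounds both by $\lesssim\Gamma_2^{-2}(r)=\Gamma_1^{-2/3}(r)$. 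Finally \eqref{S4-L4} is immediate from \eqref{S4-defu1}: $\|u_1(t,\cdot)\|_{L^\infty(\T^3)}\le\Gamma_2^{-1}(r)\sum_{j=1}^r k_j j^{-1/2}e^{-k_j^2t}\le t^{-1/2}\Gamma_2^{-1}(r)\sum_{j=1}^r k_j t^{1/2}e^{-k_j^2t}\lesssim t^{-1/2}\Gamma_2^{-1}(r)$, using the uniform-in-$s$ bound on $\sum_j k_j s^{1/2}e^{-k_j^2s}$ recorded in Step~1.

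The step I expect to be the main obstacle is the lower bound \eqref{S4-L1}--\eqref{S4-L1'}: one has to check that on $[T/320,T]$ all the summands of $c(t,r)$ share a common sign and are each $\gtrsim j^{-1}\Gamma_2^{-2}(r)$ with an absolute implied constant, so that the sum genuinely recovers the factor $\Gamma_1(r)$ instead of being drowned by its tail. This rests on the strict inequality $b_j>|\boldsymbol\eta|^2=6$ (true since $b_j\ge 8$), on $(b_j-6)t$ being bounded below uniformly in $j$ and $r$ on $[T/320,T]$ (true since $(b_j-6)t\ge 2k_1^2T/320=\tfrac{2}{5}$), and on $e^{-6t}\ge e^{-6}$ there — while the complementary upper bound $\frac{e^{-6t}-e^{-b_jt}}{b_j-6}\lesssim k_j^{-2}$ never overwhelms it. The remaining work is routine bookkeeping with the explicit plane-wave interactions and the lacunary-sum estimates from Step~1.
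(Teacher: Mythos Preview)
Your treatment of $v_{2,1}$ and of $u_1$ is correct and essentially identical to the paper's: you compute the Leray projection explicitly, obtain the closed form $v_{2,1}=c(t,r)(-1,1,1)\sin(\boldsymbol\eta\cdot x)$, and bound $c(t,r)$ from above and below exactly as needed. The equalities between components and the $L^3$/$\dot B^{-1}_{\infty,\infty}$ consequences follow as you say.

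There is, however, a genuine gap in your argument for \eqref{S4-L2}--\eqref{S4-L3}. Your Duhamel bound
\[
\int_0^t e^{-ck_j^2(t-\tau)}e^{-b_j\tau}\,d\tau\lesssim e^{-ck_j^2t}k_j^{-2}
\]
is valid, but after multiplying by the amplitude $k_j^2/j$ it produces summands $e^{-ck_j^2t}/j$, and the sum $\sum_{j=1}^{r}e^{-ck_j^2t}/j$ is \emph{not} uniformly bounded in $t>0$: it is monotone decreasing in $t$ and tends to $\Gamma_1(r)$ as $t\to 0^+$. The lacunary splitting recorded in Step~1 controls sums of the type $\sum_j k_j^2 s\,e^{-k_j^2 s}$ or $\sum_j k_j s^{1/2}e^{-k_j^2 s}$, where the prefactor $k_j^2 s$ (resp.\ $k_j s^{1/2}$) kills the low-index terms; your sum lacks precisely this prefactor, so the appeal to that splitting does not apply. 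The same issue arises in your $v_{2,3}$ estimate after summing in $i$.

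The fix, and what the paper does, is to use the alternative (equally elementary) bound
\[
\int_0^t e^{-A(t-\tau)}e^{-B\tau}\,d\tau
= e^{-\min(A,B)t}\,\frac{1-e^{-|A-B|t}}{|A-B|}
\le t\,e^{-\min(A,B)t},
\]
coming from $(1-e^{-x})/x\le 1$. With $\min(A,B)\gtrsim k_j^2$ this gives the $j$-th contribution $\lesssim \Gamma_2^{-2}(r)\,k_j^2 t\,e^{-ck_j^2 t}$ (after dropping $1/j\le 1$), and \emph{now} the lacunary bound $\sum_j k_j^2 t\,e^{-ck_j^2 t}\lesssim 1$ applies uniformly in $t>0$, yielding \eqref{S4-L2}--\eqref{S4-L3}. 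So your approach is right in spirit; you only need to keep the factor of $t$ rather than trade it for $k_j^{-2}$.
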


 \begin{proof}[Proof of Lemma \ref{S4-Le}]
 Firstly, a direct computation gives that
 \begin{align*}
 {\mathcal{P}\left((\mathbf{v'}-\mathbf{v}) \sin(x_1+2x_2-x_3)\right)  
 =  \frac{1}{3}\sin(x_1+2x_2-x_3)\,(-1, 1, 1).}
 \end{align*}
Then, by the  definition  of $v_{2, 1}$ in \eqref{S4-defv}  and above equality,
 \begin{align*}
     v_{2, 1}(t, x)=\frac{{(-1, 1, 1)} }{\Gamma^2_2(r)}\sum_{j=1}^r   \frac{k_j^2}{ {6}j} \int\limits_0^t e^{(t-\tau)\Delta}      \sin (x_1+2x_2-x_3)~  ~ e^{-(  2k_j^2+2k_j+ 4)\tau}\,d\tau.
 \end{align*}
Thus  for $t>0$
 \begin{align*}
 \|v_{2, 1}(t, \cdot)\|_{L^\infty}\lesssim\frac{1}{\Gamma^2_2(r)}\sum_{j=1}^r  \frac{k_j^2}{j} \int\limits_0^t e^{-2  k_j^2\tau}\,d\tau\lesssim \frac{\Gamma_1(r)}{\Gamma^2_2(r)}.
 \end{align*}
The vertical component of $v_{2, 1}$ is given by 
 \begin{align}\label{3rdinflation}
  v_{2, 1}^3(t, x) =\frac{{1}}{\Gamma^2_2(r)}\sum_{j=1}^r   \frac{ k_j^2}{6j } \int\limits_0^t e^{(t-\tau)\Delta} \sin(x_1+2x_2-x_3) ~e^{-( 2  k_j^2+2k_j+ 4)\tau}\,d\tau.
 \end{align}
Using this and that $k_1^2 T=64$, we obtain 
\begin{align}\label{thirdinflationproof}
 \|v_{2, 1}^3(t, \cdot)\|_{\dot{B}^{-1}_{\infty, \infty}}&\gtrsim \frac{1}{\Gamma^2_2(r)}~\sup_{s>0}\Big(\sum_{j=1}^r  \frac{ k_j^2}{j}  s^{\frac{1}{2}}\,\int\limits_0^t e^{-6(t-\tau+s)}\,e^{-4 k_j^2\tau}\,d\tau\Big)\notag\\
  &\gtrsim\frac{1}{\Gamma^2_2(r)}~\sum_{j=1}^r  \frac{ e^{-6t}}{j}  ~\int\limits_0^t k_j^2\,e^{-4  k_j^2\tau}\,d\tau \notag\\
  &\gtrsim\frac{1}{\Gamma^2_2(r)}~\sum_{j=1}^r  \frac{1}{j}  ~ (1-e^{-4k_j^2t }) \notag\\
  &\gtrsim \frac{1}{\Gamma^2_2(r)} \sum_{j=1}^r  \frac{ 1}{j} (1-e^{-1}) \gtrsim \frac{\Gamma_1(r)}{\Gamma^2_2(r)}
 \end{align}
 for $ t\in[T/256,  T]$ with $T <1.$ Moreover, we see that the components of $v_{2, 1}$ are comparable, and due to the fact the embedding \eqref{e.embed}, we get \eqref{S4-L1} and \eqref{S4-L1'} easily. 
  
 Next, let us estimate\footnote{In the computation follows, we   drop the Leray projector since its contribution is harmless.} $v_{2, 2}(t, x)$ and $u_1(t, x)$ for $ t>0.$ We have by the  definition of $v_{22}$ in \eqref{S4-defv} that
 \begin{align*}
 \|v_{2, 2}(t, \cdot)\|_{L^\infty}
 &\lesssim \frac{1}{\Gamma^2_2(r)}\sum_{j=1}^r \frac{k_j^2}{j} 
 \int\limits_0^t e^{-(1+(2k_j+1)^2)(t-\tau)  }  e^{-(2k_j^2+2k_j+4)\tau}\,d\tau ,\\
 &\lesssim \frac{1}{\Gamma^2_2(r)}\sum_{j=1}^r k_j^2 t \,e^{-(2k_j^2+2k_j+4)t}~ \frac{1-e^{-(2k_j^2+2k_j-2) t}}{(2k_j^2+2k_j-2)t}\\
 &\lesssim \frac{1}{\Gamma^2_2(r)}\sum_{j=1}^r k_j^2 t \,e^{-k_j^2 t}\lesssim \frac{1}{\Gamma^2_2(r)},
 \end{align*}
 where we used that $\frac{ (1-e^{-(2k_j^2+2k_j-2) t})}{ (2k_j^2+2k_j-2)t }$ is uniformly bounded for $t>0$. 
Similarly, from \eqref{S4-defu1} we have for $t>0,$
 \begin{align*}
 \|\sqrt{t}~u_1(t, \cdot)\|_{L^\infty}&\lesssim  \frac{1}{\Gamma_2(r)} \sum_{j=1}^r    {(k_j^2t)^\frac{1}{2}}  e^{-k_j^2t}\lesssim \frac{1}{\Gamma_2(r)} .
 \end{align*}
Thus, we have shown \eqref{S4-L2} and \eqref{S4-L4}.

Finally, using $\frac{k_j}{2}\leq   {k_j-k_i}-1$ for $i<j$ and $\sum_{i<j} k_i\leq \frac{k_j}{4}$, it is easy to see that
\begin{align*}
-2(k_j-k_i)^2=-2k_i^2-2(k_j-2k_i)k_j\leq -(2k_i^2+k_j^2).
\end{align*}
Therefore we have  for $v_{2, 3}$
 \begin{align*}
 \|v_{2, 3}(t, \cdot)\|_{L^\infty}&\lesssim   \frac{1}{\Gamma_2^2(r)} \sum_{j=1}^r  \sum_{i<j} \frac{k_ik_j}{\sqrt{ij}}\\
 &\qquad\times \int\limits_0^t \left(e^{-  (k_j-k_i-1)^2)(t-\tau)}+ e^{-(k_j+k_i+1)^2(t-\tau)}\right)~e^{-(k_j^2+k_i^2)\tau}\,d\tau\\
 &\lesssim \frac{1}{\Gamma_2^2(r)} \sum_{j=1}^r    k_j^2 \int\limits_0^t e^{-\frac{1}{4}k_j^2(t-\tau)} ~e^{- k_j^2\tau}\,d\tau\\
  &\lesssim \frac{1}{\Gamma_2^2(r)} \sum_{j=1}^r    k_j^2t\, e^{-\frac{1}{4}k_j^2t} \frac{1-e^{-\frac{3}{4}k_j^2t}}{k_j^2t}\lesssim \frac{1}{\Gamma_2^2(r)}.\qedhere
 \end{align*}
\end{proof}

\subsection*{Step 3: error analysis}
We will show that for appropriately chosen $0<T<1$, there exists a solution $u$ on $[0,1]\times\T^3$. We will then analyze the remainder term $w$ between $u$ and the second iterate. Showing the existence of $u$ is equivalent to finding $w$ satisfying the integral equation 
\begin{equation}\label{eq-v1}
w= F_1+ F_2+F_3
\end{equation}
with 
\begin{align*}
F_1:=& \cB(w,   u_1 )+\cB( u_1, w)+ \cB(w, v_2)+ \cB(v_2, w),\\
F_2:=& \cB(w, w),\\
F_3:=&  \cB(u_1, v_2)+ \cB(v_2, u_1)+ \cB(v_2, v_2).
\end{align*}
Then $u$ is given by $u=u_1+v_2+w$. 
From Lemma \ref{S4-Le}, we have for $v_2$ that
\begin{align}
\|v_2(t, \cdot)\|_{L^\infty}&\lesssim \|v_{2, 1}(t, \cdot)\|_{L^\infty}+\|v_{2, 2}(t, \cdot)\|_{L^\infty}+\|v_{2, 2}(t, \cdot)\|_{L^\infty}\notag\\
&\lesssim \Gamma^{\frac13}_1(r).\label{e.estv2Linfty}
\end{align}
From  \eqref{S4-L4}, 
\begin{align}\label{S4-es-u1}
\|u_1(t,\cdot)\|_{L^\infty}\lesssim \Gamma_1^{-\frac{1}{3}}(r)~T^{-\frac{1}{2}}\lesssim \Gamma_1^{\frac{1}{6}}(r).\end{align}
By the $L^\infty$ bilinear estimate and estimates \eqref{e.estv2Linfty}-\eqref{S4-es-u1}, we have for $0<t\leq T<1$,
\begin{align}
\|\cB(A,u_1)(t, \cdot)\|_{L^\infty}
&\lesssim {\Gamma_1^{-\frac{1}{3}}(r)}  \int\limits_0^t (t-\tau)^{-\frac{1}{2}}  \tau^{-\frac{1}{2}}\,d\tau\sup_{t\in[0, T]}\|A(t, \cdot)\|_{L^\infty}\notag\\
&\lesssim  {\Gamma_1^{-\frac{1}{3}}(r)}  \int\limits_0^1 (1-s)^{-\frac{1}{2}}  s^{-\frac{1}{2}}\,ds \sup_{t\in[0, T]}\|A(t, \cdot)\|_{L^\infty}\notag\\
&\lesssim {\Gamma_1^{-\frac{1}{3}}(r)} \sup_{t\in[0, T]}\|A(t, \cdot)\|_{L^\infty},\label{e.esta1}
\end{align}
\begin{align} 
\|\cB(A,v_2)(t, \cdot)\|_{L^\infty}
&\lesssim {\Gamma_1^{\frac13}(r)}  \int\limits_0^t (t-\tau)^{-\frac{1}{2}}   \,d\tau\sup_{t\in[0, T]}\|A(t, \cdot)\|_{L^\infty} \notag\\
&\lesssim {\Gamma_1^{\frac13}(r)}  \sqrt{T}  \sup_{t\in[0, T]}\|A(t, \cdot)\|_{L^\infty},\label{e.esta2}
\end{align} 
\begin{align}
\|\cB(u_1,v_2 )(t, \cdot)\|_{L^\infty}&\lesssim \Gamma_1^{-\frac{1}{3}}(r)   {\Gamma_1^{\frac13}(r)}  \int\limits_0^t (t-\tau)^{-\frac{1}{2}}  \tau^{-\frac{1}{2}} \,d\tau\lesssim 1,\label{e.esta3}
\end{align}
\begin{align}
\|\cB(v_2,   v_2 )(t, \cdot)\|_{L^\infty}&\lesssim   {\Gamma_1^{\frac13}(r)} {\Gamma_1^{\frac13}(r)}  \int\limits_0^t (t-\tau)^{-\frac{1}{2}}   \,d\tau\lesssim \Gamma_1^{\frac23}(r)\sqrt{T}\label{e.esta4}
\end{align}
and
\begin{align}
\|\cB(A,B)(t, \cdot)\|_{L^\infty}&\lesssim\sqrt{T}\sup_{t\in[0, T]}\|A(t, \cdot)\|_{L^\infty}\sup_{t\in[0, T]}\|B(t, \cdot)\|_{L^\infty}.\label{e.esta5}
\end{align}
We take $\delta =\Gamma_1^{-\frac{1}{3}}(r)$ and $T=\Gamma_1^{-1}(r)$. Notice that $T<\delta$ and 
 $\sqrt{T}\big(1 + {\Gamma_1^{\frac23}(r)}\sqrt{T}\big)\lesssim \Gamma_1^{-\frac{1}{3}}(r)\ll 1$ for large $r$. Using this and \eqref{e.esta1}-\eqref{e.esta5}, we can apply \cite[Lemma A.1]{Ga03}. This gives the existence of $w\in C([0,T]\times\R^3)$. We also infer that
\begin{align*}
&\sup_{t\in[0, T]}\|w(t, \cdot)\|_{L^\infty}\notag\\
&\lesssim~ \sup_{t\in[0, T]}\big(\|\cB(w,   u_1 )\|_{L^\infty}+\|\cB(w, v_2)\|_{L^\infty}+\|\cB(w, w)\|_{L^\infty}+\|\cB(u_1, v_2)\|_{L^\infty}\notag\\
&~~+\| \cB(v_2, v_2)\|_{L^\infty}\big)\\
&\lesssim~\Big(\sup_{t\in[0, T]}\|w(t, \cdot)\|_{L^\infty}\Big)^2\sqrt{T}+ \left({\Gamma_1^{-\frac{1}{3}}(r)} + {\Gamma_1^{\frac13}(r)}  \sqrt{T}\right)\sup_{t\in[0, T]}\|w(t, \cdot)\|_{L^\infty}\\
&~~+\left(1 + {\Gamma_1^{\frac23}(r)}\sqrt{T} \right).
\end{align*}
The choice of $T$ made above allows us to apply an absorbing argument (see Lemma \ref{Le-absorb}). Hence we have the following a priori bound for sufficiently large $r$
\begin{equation}\label{S4-es-v}
 \sup_{t\in[0, T]}\|w(t, \cdot)\|_{L^\infty}\lesssim \Gamma_1^{\frac{1}{6}}(r).
\end{equation} 
We now prove the main theorem.  Thanks to Lemma \ref{S4-Le} and \eqref{S4-es-v}, \eqref{S4-es-u1}, we conclude that  for $t\in[T/256, T]$ and large enough $r$  
\begin{align*}
\|u^3(t, \cdot)\|_{ \dot{B}^{-1}_{\infty,\infty}}&=\|u^3_1+v_2^3+w^3\|_{ \dot{B}^{-1}_{\infty, \infty}}\\
&\geq \|v_2^3(t, \cdot)\|_{ \dot{B}^{-1}_{\infty,  \infty}}-\|u_1(t, \cdot)\|_{ \dot{B}^{-1}_{\infty,  \infty}}-\|w(t, \cdot)\|_{ \dot{B}^{-1}_{\infty,  \infty}}\\
&\geq\|v_{2, 1}^3(t, \cdot)\|_{ \dot{B}^{-1}_{\infty,  \infty}}-\|v_{2, 2}(t, \cdot)\|_{ L^\infty}-\|v_{2, 3}(t, \cdot)\|_{ L^\infty}-\|u_1(t, \cdot)\|_{ L^\infty}\notag\\
&\qquad-\|w(t, \cdot)\|_{ L^\infty}\\
&\gtrsim \Gamma_1^\frac{1}{3}(r)-\Gamma_1^{-\frac{2}{3}}(r) -\Gamma_1^{ \frac{1}{6}}(r)\gtrsim \Gamma_1^\frac{1}{3}(r)=\frac{1}{\delta},
\end{align*}
where we used the embedding \eqref{e.embed}. 
Finally, the results stated in Theorem \ref{Th-3} follow from the fact that $u=u_1+v_{2, 1}+v_{2, 2}+v_{2, 3}+w$ and using that
\begin{align*}
\|u_1(t, \cdot)\|_{L^\infty}+\|v_{2,2}(t, \cdot)\|_{L^\infty}+\|v_{2,3}(t, \cdot)\|_{L^\infty}+\|w(t, \cdot)\|_{L^\infty} \ll \Gamma_1^{ \frac{1}{3}}(r),
\end{align*}
we obtain \eqref{S4-eq} from \eqref{S4-L1'}.
 
This completes the proof of Theorem \ref{Th-3}.

\subsection{{Proof of Theorem \ref{Th-3bis}}}
\label{S4Abis}

{
The outcome of the previous proof is that the data \eqref{3D-nu-data} is well-designed to show the norm inflation on the third component. This data will serve as a first building block for constructing the initial data for Theorem \ref{Th-3bis}. Two other blocks will be added in order to prove {Isotropic Norm Inflation} as stated in \eqref{e.sini}. The objective of this construction is to rule out the possibility of compensations between different components.

\subsection*{Step 1: choice of the initial data}
Let $\Gamma_1$ and $\Gamma_2$ be defined as in \eqref{e.defGamma}. 
  Let $r$ be a large integer  (to be specified later).   We  set  initial data  $u_{\rm in}$ as follows:
\begin{align}\label{3D-nu-data-bis}
u_{\rm in}&{=} \frac{1}{\Gamma_2{(r)}}  \sum_{j=1}^{r^3} \frac{k_j} {\sqrt{j}}\big(\mathbf{v_j}\cos (\mathbf{k_j}\cdot x)  
          + \mathbf{v_j'}\cos(\mathbf{k_j'}\cdot x)\big),
\end{align}
where we define the sequence $k_j=2^{3j}T^{-\frac{1}{2}}$ ($j=1, \cdots, r^3$)\footnote{As above, the existence time $0<T<1$ is to be determined in terms of $r$.} and where $\mathbf{v_j},\ \mathbf{v'_j},\ \mathbf{k_j}, \mathbf{k_j'}$ are vectors which (contrary to the construction in  Theorem \ref{Th-3}) depend on $j$ in the following way:
\begin{equation*}
\mathbf{v_j}=\left\{\begin{array}{rl}(1,-1,0),&1\leq j\leq r,\\(1,0,0),&r+1\leq j\leq r^2,\\(1,1,0),&r^2+1\leq j\leq r^3,\end{array}\right.,\quad \mathbf{v_j'}=\left\{\begin{array}{rl}(1,0,0),&1\leq j\leq r,\\(1,1,0),&r+1\leq j\leq r^2,\\(0,1,0),&r^2+1\leq j\leq r^3,\end{array}\right.
\end{equation*}
and 
\begin{equation*}
\mathbf{k_j}=\left\{\begin{array}{rl}(1,1,k_j),&1\leq j\leq r,\\(0,1,k_j),&r+1\leq j\leq r^2,\\(1,-1,k_j),&r^2+1\leq j\leq r^3,\end{array}\right.,\quad \mathbf{k_j'}=\left\{\begin{array}{rl}(0,-1,k_j+1),&1\leq j\leq r,\\(-1,1,k_j),&r+1\leq j\leq r^2,\\(1,0,k_j),&r^2+1\leq j\leq r^3.\end{array}\right.
\end{equation*}
Notice that we have the following relations
\begin{equation*}
\mathbf{v_j}\cdot\mathbf{k_j}=0=\mathbf{v_j'}\cdot\mathbf{k_j'},\quad \mathbf{v_j}\cdot\mathbf{e_3}=0=\mathbf{v_j'}\cdot\mathbf{e_3},
\end{equation*}
which {guarantee} that the data is incompressible and has vanishing third component. 
Moreover, we have a {low frequency vector}
\begin{equation}\label{e.low-wave-number}
\boldsymbol{\eta_j}:=\mathbf{k_j}-\mathbf{k_j'}=\left\{\begin{array}{rl}(1,2,-1),&1\leq j\leq r,\\(1,0,0),&r+1\leq j\leq r^2,\\(0,-1,0),&r^2+1\leq j\leq r^3\end{array}\right.
\end{equation}
{that varies according to $j$.}
{This is key} to the {isotropic} norm inflation mechanism. Notice that 
\begin{equation*}
\mathbf{v_j}\cdot\mathbf{k_j'}=\left\{\begin{array}{rl}1,&1\leq j\leq r,\\-1,&r+1\leq j\leq r^2,\\1,&r^2+1\leq j\leq r^3,\end{array}\right.,\quad \mathbf{v_j}\cdot\boldsymbol{\eta_j}=\left\{\begin{array}{rl}-1,&1\leq j\leq r,\\1,&r+1\leq j\leq r^2,\\-1,&r^2+1\leq j\leq r^3.\end{array}\right.
\end{equation*}

\subsection*{Step 2: analysis of  the second approximation}  

As above, we consider the second Duhamel iterate from which the norm inflation comes 
$$ v_2:= \mathcal{B}(u_1 , u_1),$$
where $u_1:=e^{t\Delta} u_{\rm in}$ is the first Duhamel iterate.    
We identify the inflation term $v_{2,1}$ by decomposing as above, cf. \eqref{S4-defv}: $v_2=  v_{2, 1}+ v_{2, 2}+  v_{2, 3},$ where
\begin{equation}\label{S4-defv-bis}
\left\{\begin{aligned}
    &  v_{2, 1}(t, x):=\frac{1}{\Gamma^2_2(r)}\sum_{j=1}^{r^3}   \int\limits_0^t e^{(t-\tau)\Delta}~ \mathcal{P} U^+_{j, j}(\tau, x)  \,d\tau,\\
    & v_{2, 2}(t, x):=\frac{1}{\Gamma^2_2(r)}\sum_{j=1}^{r^3}    \int\limits_0^t e^{(t-\tau)\Delta}~ \mathcal{P}  U^-_{j, j}(\tau, x)\,d\tau ,\\
    &  v_{2, 3}(t, x):=\frac{1}{\Gamma^2_2(r)}\sum_{j=1}^{r^3}\sum_{i< j}  \int\limits_0^t e^{(t-\tau)\Delta}~ \mathcal{P}  \big(U_{i, j}(\tau, x)+ U_{j, i}(\tau, x)\big)\,d\tau.
\end{aligned}\right.
\end{equation}
{Here,
\begin{align*}
U_{i, j}(\tau, x):=&-\frac{k_ik_j}{2\sqrt{ij}}\mathbf{v_j}\Big((\mathbf{v_i}\cdot\mathbf{k_j})\Big(\sin((\mathbf{k_j}+\mathbf{k_i})\cdot x)+\sin((\mathbf{k_j}-\mathbf{k_i})\cdot x)\Big)e^{-\tau(|\mathbf{k_j}|^2+|\mathbf{k_i}|^2)}\\ 
&+(\mathbf{v_{i}'}\cdot\mathbf{k_j})\Big(\sin((\mathbf{k_j}+\mathbf{{k}_{i}'})\cdot x)+\sin((\mathbf{k_j}-\mathbf{{k}_{i}'})\cdot x)\Big)e^{-\tau(|\mathbf{k_j}|^2+|\mathbf{{k}_{i}'}|^2)} \Big)\\
&-\frac{k_ik_j}{2\sqrt{ij}}\mathbf{{v}_{j}'}\Big((\mathbf{v_i}\cdot\mathbf{{k}_{j}'})\Big(\sin((\mathbf{{k}_{j}'}+\mathbf{k_i})\cdot x)+\sin((\mathbf{{k}_{j}'}-\mathbf{k_i})\cdot x)\Big)e^{-\tau(|\mathbf{{k}_{j}'}|^2+|\mathbf{k_i}|^2)}\\
&+(\mathbf{v_{i}'}\cdot\mathbf{{k}_{j}'})\Big(\sin((\mathbf{{k}_{j}'}+\mathbf{{k}_{i}'})\cdot x)+\sin((\mathbf{{k}_{j}'}-\mathbf{{k}_{i}'})\cdot x)\Big)e^{-\tau(|\mathbf{{k}_{j}'}|^2+|\mathbf{{k}_{i}'}|^2)} \Big)
\end{align*}
and
$$U_{j,j}(\tau,x)=U^+_{j, j}(\tau,x)+U^-_{j, j}(\tau,x), $$ with
\begin{align*}
U^+_{j, j}(\tau,x):=&-\frac{k_{j}^2}{2j}\Big(\mathbf{v_{j}}(\mathbf{v_{j}'}\cdot\mathbf{k_j})\sin((\mathbf{k_j}+\mathbf{{k}_{j}'})\cdot x)e^{-\tau(|\mathbf{k_j}|^2+|\mathbf{{k}_{j}'}|^2)}
+\mathbf{v_{j}'}(\mathbf{v_j}\cdot\mathbf{{k}_{j}'})\sin((\mathbf{{k}_{j}'}+\mathbf{k_j})\cdot x)e^{-\tau(|\mathbf{{k}_{j}'}|^2+|\mathbf{k_j}|^2)}\Big)
\end{align*}}
Using the relation \eqref{e.low-wave-number}, it appears that
\begin{align}\label{v21iso}
     v_{2, 1}(t, x)=&\frac{(-1, 1, 1) }{\Gamma^2_2(r)}\sum_{j=1}^r   \frac{k_j^2}{6j} \int\limits_0^t e^{-6(t-\tau)}e^{-(  2k_j^2+2k_j+ 4)\tau}\sin (x_1+2x_2-x_3)~  \,d\tau\notag\\
&+\frac{(0, -1, 0) }{\Gamma^2_2(r)}\sum_{j=r+1}^{r^2}   \frac{k_j^2}{2j} \int\limits_0^t e^{-(t-\tau)}e^{-(3+2k_j^2)\tau}\sin (x_1)\,d\tau\notag\\
&+\frac{(1,0,0) }{\Gamma^2_2(r)}\sum_{j=r^2+1}^{r^3}   \frac{k_j^2}{2j} \int\limits_0^t e^{-(t-\tau)}e^{-(3+2k_j^2)\tau}\sin (-x_2)\,d\tau.
 \end{align}}

{Essentially the same arguments as in Lemma \ref{S4-Le} yield that for all $t>0$ 
\begin{align}
     &\|  v_{2, 2}(t, \cdot)\|_{L^\infty(\T^3)}\lesssim \frac{1}{\Gamma_2^2(r)}=\Gamma_1^{-\frac23}(r),\label{S4-L2iso}\\
     & \|  v_{2, 3}(t, \cdot)\|_{L^\infty(\T^3)}\lesssim \frac{1}{\Gamma_2^2(r)}=\Gamma_1^{-\frac23}(r),\label{S4-L3iso}\\
      &\|u_1(t, \cdot)\|_{L^\infty(\T^3)}\lesssim \frac{1}{\sqrt{t}~\Gamma_2(r)  }= {\Gamma_1^{-\frac{1}{3}}(r)}\,t^{-\frac{1}{2}} \label{S4-L4iso}.
\end{align}
Let us focus on obtaining a lower bound in $\dot{B}^{-1}_{\infty,\infty}$ of the dot product of $v_{2,1}(t,\cdot)$ with any unit direction. This is the main difference with respect to the proof of Theorem \ref{Th-3}. We claim that}{ for all $t\in [T/320,T]$, for $r\geq 64$,}
{
\begin{equation}\label{v21inflationiso}
\inf_{\mathbf{e}\in\mathbb{R}^3:|\mathbf{e}|=1}\|v_{2,1}(t,\cdot)\cdot\mathbf{e}\|_{\dot{B}^{-1}_{\infty, \infty}}\gtrsim \Gamma_1^\frac13(r).
\end{equation}}

{To show this, we make use of the structure of the inflation term $v_{2,1}$ in \eqref{v21iso} and we also utilize the following}
{ simple fact from algebra
\begin{equation}\label{algebrafact}
\max\left(\left|\left(\begin{array}{c}\alpha\\ \beta\\ \gamma\end{array}\right)\cdot\left(\begin{array}{c}-1\\1\\1\end{array}\right)\right|,\left|\left(\begin{array}{c}\alpha\\ \beta\\ \gamma\end{array}\right)\cdot\left(\begin{array}{c}0\\-1\\0\end{array}\right)\right|,\left|\left(\begin{array}{c}\alpha\\ \beta\\ \gamma\end{array}\right)\cdot\left(\begin{array}{c}1\\0\\0\end{array}\right)\right|\right)\geq {\frac1{4\sqrt{2}}}.
\end{equation}}
{According to \eqref{algebrafact}, first suppose that the unit vector $\mathbf{e}=(\alpha,\beta,\gamma)$ satisfies \begin{equation}\label{case1algebra}
\left|\left(\begin{array}{c}\alpha\\ \beta\\ \gamma\end{array}\right)\cdot\left(\begin{array}{c}-1\\1\\1\end{array}\right)\right|\geq {\frac1{4\sqrt{2}}}. 
\end{equation}
Using this, the form of $v_{2,1}$ in \eqref{v21iso}  and the same arguments as in \eqref{3rdinflation}-\eqref{thirdinflationproof}, we obtain that for $t\in [T/320,T]$
\begin{align*}\sup_{s>0}s^{\frac{1}{2}}|e^{s\Delta}v_{2,1}(t,0,0,\tfrac{\pi}{2})\cdot\mathbf{e}|\gtrsim  \frac{\sup_{s>0} s^{\frac{1}{2}}e^{-6s}}{\Gamma^{2}_{2}(r)}\sum_{j=1}^{r}\frac{k_{j}^2}{j}\int\limits_{0}^{t}e^{-6(t-\tau)}e^{-\tau(2k_j^2+2k_j+4)} d\tau\gtrsim \Gamma^{\frac{1}{3}}_{1}(r).
\end{align*}
Hence, in the first case \eqref{case1algebra} we get that for all $t\in [T/320,T]$ and $r\geq 64$
$$\|v_{2,1}(t,\cdot)\cdot\mathbf{e}\|_{\dot{B}^{-1}_{\infty, \infty}}\gtrsim \Gamma_1^\frac13(r). $$}
{For the second case according to \eqref{algebrafact}, suppose that the unit vector $\mathbf{e}=(\alpha,\beta,\gamma)$ satisfies \begin{equation}\label{case2algebra}
\left|\left(\begin{array}{c}\alpha\\ \beta\\ \gamma\end{array}\right)\cdot\left(\begin{array}{c}0\\-1\\0\end{array}\right)\right|\geq {\frac1{4\sqrt{2}}}. 
\end{equation}
From this, the form of $v_{2,1}$ in \eqref{v21iso}  and similar arguments as in the first case, we obtain that for $t\in [T/320,T]$ and $r\geq 64$
\begin{align*}\sup_{s>0}s^{\frac{1}{2}}|e^{s\Delta}v_{2,1}(t,\tfrac{\pi}{2},0,\tfrac{\pi}{2})\cdot\mathbf{e}|&\gtrsim  \frac{\sup_{s>0} s^{\frac{1}{2}}e^{-s}}{\Gamma^{2}_{2}(r)}\sum_{j=r+1}^{r^2}\frac{k_{j}^2}{j}\int\limits_{0}^{t}e^{-(t-\tau)}e^{-\tau(2k_j^2+3)} d\tau\\
&\gtrsim\frac{1}{{\Gamma^{2}_{2}(r)}}\sum_{j=r+1}^{r^2}\frac{1}{j}\gtrsim\frac{\Gamma_{1}(r)}{\Gamma^{2}_{2}(r)}\gtrsim \Gamma^{\frac{1}{3}}_{1}(r).
\end{align*}
Hence, in the second case \eqref{case2algebra} we get that for all $t\in [T/320,T]$ and $r\geq 64$
$$\|v_{2,1}(t,\cdot)\cdot\mathbf{e}\|_{\dot{B}^{-1}_{\infty, \infty}}\gtrsim \Gamma_1^\frac13(r). $$}
{For the third and final case according to \eqref{algebrafact}, suppose that the unit vector $\mathbf{e}=(\alpha,\beta,\gamma)$ satisfies \begin{equation}\label{case3algebra}
\left|\left(\begin{array}{c}\alpha\\ \beta\\ \gamma\end{array}\right)\cdot\left(\begin{array}{c}1\\0\\0\end{array}\right)\right|\geq {\frac1{4\sqrt{2}}}. 
\end{equation}
From this, the form of $v_{2,1}$ in \eqref{v21iso}  and similar arguments as in the previous cases, we obtain that for $t\in [T/320,T]$ and $r\geq 64$
\begin{align*}\sup_{s>0}s^{\frac{1}{2}}|e^{s\Delta}v_{2,1}(t,0,\tfrac{\pi}{2},0)\cdot\mathbf{e}|&\gtrsim  \frac{\sup_{s>0} s^{\frac{1}{2}}e^{-s}}{\Gamma^{2}_{2}(r)}\sum_{j=r^2+1}^{r^3}\frac{k_{j}^2}{j}\int\limits_{0}^{t}e^{-(t-\tau)}e^{-\tau(2k_j^2+3)} d\tau\\
&\gtrsim\frac{1}{{\Gamma^{2}_{2}(r)}}\sum_{j=r^2+1}^{r^3}\frac{1}{j}\gtrsim\frac{\Gamma_{1}(r)}{\Gamma^{2}_{2}(r)}\gtrsim \Gamma^{\frac{1}{3}}_{1}(r).
\end{align*}
Hence, in the third case \eqref{case3algebra} we get that for all $t\in [T/320,T]$ and $r\geq 64$
$$\|v_{2,1}(t,\cdot)\cdot\mathbf{e}\|_{\dot{B}^{-1}_{\infty, \infty}}\gtrsim \Gamma_1^\frac13(r). $$
Combing these three cases, we see that we have established \eqref{v21inflationiso}.}

 {Using \eqref{v21inflationiso} with \eqref{S4-L2iso}-\eqref{S4-L4iso}, we see that} {the final error analysis is carried out as in \textbf{Step 3} of {Theorem \ref{Th-3}} above, chosing $\delta =\Gamma_1^{-\frac{1}{3}}(r)$ and $T=\Gamma_1^{-1}(r)$. This concludes the proof of Theorem \ref{Th-3bis}.}

\section{Symmetry breaking in the presence of favorable pressure}
\label{ill}

The objective of this section is to prove Theorem \ref{Th-2}.  
In the following,  we construct an initial data $(u_{\rm in}^{\rm h}, 0)$ satisfying condition \eqref{Structure-initialdata} and such that the condition $u^3\equiv 0$ is instantly broken for the Navier-Stokes problem \eqref{NS}. 
For further insights about the heuristics behind our construction, we refer to Section \ref{sec.heurfavor}. 

In this section, we use the data introduced in Theorem \ref{Th-2}:
\begin{equation*}
 u_{\rm in}{=}   \left(  
            \cos x_2\, \frac{N}{N+\sin x_3},\,
   \cos x_1\,  \frac{N+\sin x_3}{N},\,
         0\right).
\end{equation*}
First, let us explain why a unique solution $u$ exists on $[0,1]\times\T^3$ for $N$ sufficiently large. Let 
$$
u_{\rm in}^{2D}=(\cos x_2,\cos x_1,0)
$$
and let $u^{2D}\in L^\infty((0,1)\times\T^3)$ be the two-dimensional global solution. Then,
\begin{equation}\label{e.smallU}
u_{\rm in}-u_{\rm in}^{2D}=\left(  
            -\cos x_2\, \frac{\sin x_3}{N+\sin x_3},\,
   \cos x_1\,  \frac{\sin x_3}{N},\,
         0\right)
\end{equation}
and we see finding $u$ is equivalent to finding $U$ on $[0,1]\times\T^3$ satsifying
\begin{equation}\label{e.dumamelU}
U=e^{t\Delta}(u_{\rm in}-u_{\rm in}^{2D})+\cB(U,u^{2D})+\cB(u^{2D},U)+\cB(U,U).
\end{equation}
Using the previously discussed $L^\infty$-bilinear estimates and \eqref{e.smallU}, we see that for sufficiently large $N$ we can apply \cite[Lemma A.1]{Ga03} on successive time intervals to get existence of $U\in L^\infty((0,1)\times\T^3)$ satisfying \eqref{e.dumamelU}.

We rewrite the Navier-Stokes equations \eqref{NS} as
\begin{equation*}
u=e^{t\Delta} u_{{\rm in}}+\mathcal{B}(u, u).
\end{equation*}
Now, we define the first and the second approximate solutions 
in the following natural way: let $u_1=e^{t\Delta} u_{\rm in}$ and 
\begin{align*}
u_2:= e^{t\Delta} u_{\rm in}+v_2\quad{\rm  with }\quad v_2:=\mathcal{B}(u_{ 1}, u_{ 1}).
\end{align*}
We denote the difference between $u$ and the second approximation $u_2$ by $w$. Then $w$  satisfies the integral equation \eqref{eq-v1}.

\subsection*{Step 1: analysis of  $v_2$}
We show that for the initial data $u_{\rm in}$ the third component of the first approximate solution $u_2^3$ has a non-zero $\dot{B}^{0}_{\infty, \infty}$ norm for a short time interval. Notice that
\begin{equation}\label{first-ite} 
 u_1(t, x){=}  \left(  \begin{array}{c}
           e^{-t} \cos x_2\, e^{t\partial_3^2}(\frac{N}{N+\sin x_3})\\
   e^{-t}\cos x_1\,\frac{N+e^{-t}\sin x_3}{N}\\
         0
          \end{array}
          \right)\cdotp
\end{equation}
Recalling the definition
\begin{align*}
v_2(t, x)= -\int\limits_0^t e^{(t-\tau)\Delta}\mathcal{P}(u_1\cdot\nabla u_1)(\tau, x)\,d\tau,
\end{align*}
we have  
\begin{align}\label{ap-u3}
 v_2(t, x)&= \int\limits_0^t e^{(t-\tau)\Delta}\mathcal{P} \left(  \begin{array}{c}
           e^{-2\tau} \cos x_1\sin x_2 \, F(\tau, x_3)\\
   e^{-2\tau}\sin x_1 \cos x_2 \, F(\tau, x_3)\\
         0
          \end{array}
          \right)\,d\tau\notag\\
    &=- 2\int\limits_0^t e^{-2\tau} e^{(t-\tau)\Delta}\mathcal{P} \left(  \begin{array}{c}
          0\\
  0\\
         \sin x_1\sin x_2 \, \partial_3 F(\tau, x_3)
          \end{array}
          \right)\,d\tau\notag\\
    &=- \int\limits_0^t   e^{-2\tau}e^{(t-\tau)\Delta} (-\Delta)^{-1}\left(  \begin{array}{c}
          \cos x_1 \sin x_2 \,\partial_{3}^2 F(\tau, x_3)\\
          \sin x_1\cos _2 \,\partial_{3}^2 F(\tau, x_3)\\
      2\sin x_1 \sin x_2 \partial_3 F(\tau, x_3)\\
          \end{array}
          \right)\,d\tau, 
\end{align}
where $F(\tau, x_3):=\frac{N+e^{-\tau}\sin x_3}{N}\, e^{\tau\partial_3^2}\Bigl(\frac{N}{N+\sin x_3}\Bigr).$

Since we are not able to write an explicit formula for $ F$,  we need to determine the main contributions of $F$ while keeping control of the remainder parts. Unlike the case of \cite{Wa15},  there is no way to use the Taylor  series $e^{\tau\Delta}=\sum_{j\in \mathbb{N}}  \frac{(\tau\Delta)^j}{j!}$ to single out the main parts of $F$. Indeed at each order of $e^{\tau\partial_3^2}\frac{\cos x_3}{(N+\sin x_3)^2}$ there is a remainder 
term $\frac{\cos x_3}{(N+\sin x_3)^2}$ and thus we are not able to control the tail, even for a short time. Therefore, our idea is to first write a Taylor expansion for $\frac{N}{N+\sin x_3}$ and then compute the associated heat flows. 
 
Since $u^3_2= v_2^3$ we need to consider
\begin{align*}
\partial_3 F(\tau, x_3)
&=    e^{-\tau}  \Bigl\{N(1-e^{\tau})  \,e^{\tau\partial_3^2}\Bigl(\frac{\cos x_3}{(N+\sin x_3)^2}\Bigr)+\cos x_3\, (e^{\tau\partial_3^2}-1) \Bigl(\frac{1}{N+\sin x_3}\Bigr)\\
&\qquad\qquad-(N+\sin x_3)  (e^{\tau\partial_3^2}-1)\Bigl(\frac{\cos x_3}{(N+\sin x_3)^2}\Bigr)\Bigr\}.
\end{align*}
It is clear that due to the structure of the initial data one has that $\partial_3 F(\tau, x_3)|_{\tau=0}=0$ and $\partial_\tau \partial_3 F(\tau, x_3)|_{\tau=0}\sim \frac{1}{N^2}$. 
Thus, for a short time 
$\partial_3 F(\tau, x_3)\sim \frac{\tau}{N^2}.$

Using the fact that 
\begin{align*}
\frac{1}{N+\sin x_3}=\frac{1}{N}\,\frac{1}{1+\frac{\sin x_3}{N}}=\frac{1}{N}\sum_{j\in \mathbb{N}} (- \frac{\sin x_3}{N})^j,
\end{align*}
we write 
\begin{align*}
\frac{1}{N+\sin x_3}= \frac{1}{N}-\frac{\sin x_3}{N^2}+ R_1(x_3)\quad\mbox{with}\quad R_1(x_3):= \frac{\sin^2 x_3}{(N+\sin x_3)N^2}.
\end{align*}
Then, we have
\begin{multline}\label{ap-es1}
(e^{\tau \partial_3^2} -1) \Bigl(\frac{1}{N+\sin x_3}\Bigr)= (e^{\tau \partial_3^2} -1)\Bigl(\frac{1}{N}-\frac{\sin x_3}{N^2}+ R_1(x_3)\Bigr)\\
=\frac{\sin x_3}{N^2} (1-e^{-\tau}) + (e^{\tau \partial_3^2} -1)R_1(x_3).
\end{multline}
Notice that $\frac{d}{d x_3}\frac{1}{N+\sin x_3}=-\frac{\cos x_3}{(N+\sin x_3)^2}$, so one has that
\begin{align*} 
 \frac{\cos x_3}{(N+\sin x_3)^2}= \frac{\cos x_3}{N^2}-\frac{\sin (2x_3)}{N^3}+ R_2(x_3)\quad\mbox{with}\quad R_2:=   \frac{(N+2\sin x_3)\sin x_3\cos x_3}{(N+\sin x_3)^2N^3}. 
\end{align*}
Furthermore,
\begin{multline}\label{ap-es2}
(e^{\tau \partial_3^2} -1) \Bigl(\frac{\cos x_3}{(N+\sin x_3)^2}\Bigr)= (e^{\tau \partial_3^2} -1)\Bigl(\frac{\cos x_3}{N^2}-\frac{\sin (2x_3)}{N^3}+ R_2(x_3)\Bigr)\\
= \frac{\cos x_3}{N^2} (e^{-\tau}-1)  +\frac{\sin (2x_3)}{N^3}(1-e^{-4\tau}) +(e^{\tau \partial_3^2} -1)R_2(x_3).
\end{multline}
For  the first term in the formula of $F$, we note that
\begin{align}\label{ap-es3}
e^{\tau \partial_3^2}\Bigl(\frac{\cos x_3}{(N+\sin x_3)^2}\Bigr)= \frac{\cos x_3}{N^2} e^{-\tau}-\frac{\sin(2 x_3)}{N^3} e^{-4\tau}+ e^{\tau \partial_3^2}R_2(x_3).
\end{align}
Applying \eqref{ap-es1}-\eqref{ap-es3} into the formula of $\partial_3 F$ and then using \eqref{ap-u3}, we get
\begin{align*} 
\Delta u_2^3(t, x)&=   \frac{2}{N^2}\int\limits_0^t e^{(t-\tau)\Delta}   \Bigl\{ \sin x_1\sin x_2\sin (2 x_3) ( e^{-6\tau}- e^{-4\tau})\Bigr\}\,d\tau \\
&\quad- \frac{2 }{N^3}\int\limits_0^t e^{(t-\tau)\Delta}   \Bigl\{ \sin x_1\sin x_2\sin x_3\sin (2 x_3) ( e^{-3\tau}- e^{-7\tau})\Bigr\}\,d\tau\\
&\quad+  \int\limits_0^t e^{(t-\tau)\Delta}    2e^{-3\tau}\sin x_1\sin x_2\Bigl\{N(1-e^{\tau})  e^{\tau \partial_3^2}R_2(x_3)   \\
&  \qquad\quad \quad+ \cos x_3 (e^{\tau \partial_3^2} -1)R_1(x_3) -  (N+\sin x_3) (e^{\tau \partial_3^2}-1)R_2(x_3)\Bigr\}\,d\tau\\
&:= S_1(t, x) +S_2(t, x)+S_3(t, x).
\end{align*}
Now, we are ready to estimate $\Delta u^3_2(t, x)$ in the space $\dot{B}^{-2}_{\infty, \infty}$ for a small time  $t~ (0< t\ll1)$.  
For $s>0,$ we find that 
 \begin{align*}
e^{s\Delta} S_1(t, x)&= \frac{2 }{N^2} \sin x_1\sin x_2\sin (2 x_3)\int\limits_0^t e^{-6(t+s-\tau)}   ( e^{-6\tau}- e^{-4\tau})\,d\tau\\
 &=\frac{2 }{N^2} \sin x_1\sin x_2\sin (2 x_3)(te^{-6t}-\frac{1}{2}(e^{-4t}-e^{-6t}))e^{-6s},
 \end{align*}
 thus  
  \begin{align}
  \begin{split}\label{es-S1}
\|S_1(t, \cdot)\|_{\dot{B}^{-2}_{\infty, \infty}}&=\sup_{s>0} s \|e^{s\Delta}S_1(t, \cdot)\|_{L^\infty(\T^3)}\\
&=\frac{1}{N^2}( e^{-4t}-(2t+1)e^{-6t}) \,\sup_{s>0} s e^{-6s}\geq \frac{1}{3 e} \frac{t^2 }{N^2}.
\end{split}
 \end{align}
To estimate  $e^{s\Delta} S_2,$ let us recall the formula
 $$\sin x_3\sin (2 x_3)=\frac{1}{2}\cos(x_3)-\frac{1}{2}\cos(3x_3),$$
 then 
 \begin{align*}
e^{s\Delta}S_2(t, x)&=  \frac{ 1}{N^3}\sin x_1\sin x_2e^{-2(t+s)}\int\limits_0^t  e^{(t+s-\tau)\partial_3^2}\Bigl( \cos (2 x_3)-\cos x_3\Bigr) ( e^{-\tau}- e^{-5\tau})\,d\tau\\
&= {\frac{1}{N^3}\sin x_1\sin x_2\cos(3x_3) \left(\frac{1}{8}e^{-3t}+\frac{1}{4}e^{-7t}+\frac{1}{8}e^{-11t}\right)e^{-11s}}\\
&\quad -\frac{1}{N^3}\sin x_1\sin x_2\cos x_3\left(t e^{-3t}+\frac{1}{4}e^{-7t}-\frac{1}{4}e^{-3t}\right)e^{-3s}.
 \end{align*}
 Thus, we have
 \begin{align*}
 &\quad\|e^{s\Delta} S_2(t, \cdot)\|_{L^\infty(\T^3)}\\
 &\leq  {\frac{1}{N^3} \left(\frac{1}{8}e^{-3t}+\frac{1}{4}e^{-7t}+\frac{1}{8}e^{-11t}\right)e^{-11s}}+\frac{1}{N^3}\left(t e^{-3t}+\frac{1}{4}e^{-7t}-\frac{1}{4}e^{-3t}\right)e^{-3s}
 \end{align*}
 and
 \begin{multline}\label{es-S2}
\|S_2(t, \cdot)\|_{\dot{B}^{-2}_{\infty, \infty}}= \sup_{s>0} s \|e^{s\Delta}S_2(t, \cdot)\|_{L^\infty(\T^3)}\\
\leq \frac{1}{N^3}\left(e^{-11t}\left(\frac{1}{8}e^{8t}+\frac{1}{4}e^{4t}+\frac{1}{8}\right)+e^{-3t}\left(t+\frac{1}{4}e^{-4t}-\frac{1}{4}\right)\right)\, \sup_{s>0} s  e^{-3s}\leq \frac{4}{3 e}\frac{t^2}{N^3}. 
 \end{multline}
To estimate $S_3$,  we write
\begin{align*}
 e^{s\Delta}S_3(t, x)&=2 e^{-2(t+s)} \sin x_1\sin x_2 \int\limits_0^t    e^{-\tau }\Bigl\{N(1-e^{\tau})  e^{(t+s) \partial_3^2}R_2(x_3)   \\
&\quad  +e^{(t+s-\tau )\partial_3^2} \Bigl(\cos x_3 (e^{\tau \partial_3^2} -1)R_1(x_3)-   (N+\sin x_3) (e^{\tau \partial_3^2}-1)R_2(x_3)\Bigr)\Bigr\}\,d\tau,
\end{align*}
and by Lemma \ref{Le-heat}
\begin{multline*}
 \|e^{s\Delta} S_3(t, \cdot)\|_{L^\infty(\T^3)}
 \leq  2 e^{-2(t+s)} \int\limits_0^t  e^{-\tau }\Bigl\{ ( e^{\tau}-1)  N\|R_2\|_{L^\infty(\T)}\\
 \qquad\quad+ \tau \|R_1''\|_{L^\infty(\T)}+ (N+1)\tau \|R_2''\|_{L^\infty(\T)}\Bigr\}\,d\tau\\
 \leq 2 e^{-2(t+s)}\Bigl((t - 1+e^{-t}) N\|R_2\|_{L^\infty(\T)}+ (1-(t+1)e^{-t})(\|R_1''\|_{L^\infty(\T)}+ (N+1)\|R_2''\|_{L^\infty(\T)})\Bigr\}.
\end{multline*}
It is easy to check that 
\begin{align*}
\|R_2\|_{L^\infty(\T)}\leq  {\frac{16}{N^4}}
\end{align*}
and
\begin{align*}
\|R_1''\|_{L^\infty(\T)}\leq  {\frac{48}{N^3}},\quad \|R_2''\|_{L^\infty(\T)}\leq  {\frac{928}{N^4}}.
\end{align*}
 Thus 
 \begin{align}\label{es-S3}
\|S_3(t, \cdot)\|_{\dot{B}^{-2}_{\infty, \infty}}=\sup_{s>0} s  \|e^{s\Delta} S_3(t, \cdot)\|_{L^\infty(\T^3)} 
\leq  \frac{496}{e}\frac{t^2}{N^3}.
 \end{align} 
Let $N_0:=3000$. Combining  \eqref{es-S1}-\eqref{es-S3}, we have for all $N>N_0$, 
 \begin{align}
 \begin{split}\label{es-u23}
 \|u_2^3(t, \cdot)\|_{L^\infty}&\geq \|\Delta u_2^3(t, \cdot)\|_{\dot{B}^{-2}_{\infty, \infty}}\\
 &\geq \|S_1(t, \cdot)\|_{\dot{B}^{-2}_{\infty, \infty}}-\|S_2(t, \cdot)\|_{\dot{B}^{-2}_{\infty, \infty}}-\|S_3(t, \cdot)\|_{\dot{B}^{-2}_{\infty, \infty}}   \geq  \frac{1}{6 e}\frac{t^2}{N^2}.
 \end{split}
 \end{align}
 
\subsection*{Step 2: further analysis of $v_2$}
Similar to previous computations, one obtains that 
\begin{align}
    \|\partial_3 F(\tau, \cdot)\|_{L^\infty(\T)}+\|\partial_3^2 F(\tau, \cdot)\|_{L^\infty(\T)}\lesssim  \frac{\tau}{N}.\label{F}
\end{align}
Thus,  from \eqref{ap-u3} and \eqref{F} we have
\begin{align}\label{es-v2}
    \|v_2(t, \cdot)\|_{L^\infty(\T^3)}\lesssim  \|\Delta v_2(t, \cdot)\|_{L^\infty(\T^3)}\lesssim    \int\limits_0^t e^{-2\tau } \frac{\tau}{N}\,d\tau\lesssim   \frac{t^2}{N}.
\end{align}
Meanwhile, from \eqref{first-ite} we see that 
\begin{align}\label{es-u1}
    \|u_1(t, \cdot)\|_{L^\infty(\T^3)}\lesssim   e^{-t}.
\end{align}

\subsection*{Step 3: final error estimate}\label{S4.2}
Now we analyze the remaining part of the solution, which we denote by $w$. We use $L^\infty$ bilinear estimates for controlling the error. 
Recall equation \eqref{eq-v1}, 
estimates \eqref{es-v2},  \eqref{es-u1}. Therefore using the equation for the perturbation \eqref{eq-v1} and the estimates \eqref{es-v2},  \eqref{es-u1}, we have for all $0<T<1$,
\begin{align*}
&\sup_{0\leq t\leq T}\|w(t, \cdot)\|_{L^\infty(\T^3)}\\
\leq& ~\sup_{0\leq t\leq T}\big(\|\cB(w,   u_1 )\|_{L^\infty}+\|\cB( u_1, w)\|_{L^\infty}+ \|\cB(w, v_2)\|_{L^\infty}+ \|\cB(v_2, w)\|_{L^\infty}\notag\\
 &~~+\|\cB(w, w)\|_{L^\infty}
\|B(u_1, v_2)\|_{L^\infty}+ \|\cB(v_2, u_1)\|_{L^\infty}+\| \cB(v_2, v_2)\|_{L^\infty}\big)\notag\\
\lesssim&~ \sup_{0\leq t\leq T}\big(\|\cB(w,   u_1 )\|_{L^\infty}+\|\cB(w, v_2)\|_{L^\infty}+\|\cB(w, w)\|_{L^\infty}+\|\cB(u_1, v_2)\|_{L^\infty}\notag\\
&~~+\| \cB(v_2, v_2)\|_{L^\infty}\big)\\
\lesssim& T^\frac{1}{2}\left(1+ \frac{T^2}{N}+\sup_{0\leq t\leq T}\|w(t, \cdot)\|_{L^\infty(\T^3)}\right)\sup_{0\leq t\leq T}\|w(t, \cdot)\|_{L^\infty(\T^3)}+ \left( \frac{T^\frac{5}{2}}{N}+   \frac{T^{\frac{9}{2}}}{N^2}\right).
\end{align*}
By an absorbing argument (see Lemma \ref{Le-absorb}) for $T\ll 1$ and $N\gg 1$, we obtain   
\begin{align}\label{es-v}
\sup_{0\leq t\leq T}\|w(t, \cdot)\|_{L^\infty(\T^3)}\lesssim    \frac{T^\frac{5}{2}}{N}.
\end{align}
Using the fact that  $u^3=u_2^3+w^3$, estimates \eqref{es-u23} and \eqref{es-v} imply that
\begin{align*}
\|u^3(t, \cdot)\|_{ L^\infty(\T^3)}&\geq \|u_2^3(t, \cdot)\|_{L^\infty(\T^3)}-\|w(t, \cdot)\|_{ L^\infty(\T^3)}\\
&\gtrsim   t^2\Bigl(\frac{1}{N^2}-\frac{  t^\frac{1}{2}}{N}\Bigr)\gtrsim   \frac{t^2}{N^2}
\end{align*}
and
 \begin{align*}
\|u^3(t, \cdot)\|_{ L^\infty(\T^3)}&\leq \|u_2^3(t, \cdot)\|_{L^\infty(\T^3)}+\|w(t, \cdot)\|_{ L^\infty(\T^3)}\lesssim     \frac{t^2}{N^2}
 \end{align*}
 for any $0\leq t\leq T\leq  \frac{1}{2N^2}\ll1.$
 
This completes the proof of Theorem \ref{Th-2}.

\section{2.75D shear flows}
\label{S3}

\subsection{2.75D shear flows and nonlinear inviscid damping}\label{S3-s}

Let us now describe the aforementioned example of a 3D Euler solution that weakly converges but does not strongly converges  to the Kolmogorov flow $u^K=(0,\sin x_3,0)$ in $L^2(\T^3)$ as $t\to \infty$. 
In \eqref{3D-Euler-solu}, take  $ f(y_1, y_2)=
\sin y_1$ 
and $ g(y_3)=-\sin y_3$. Then  the following smooth  initial data  
\begin{equation*} 
(\sin(\lambda  x_1+x_2),\,
         -\lambda 
        \sin(\lambda  x_1+x_2)+\sin x_3,\,
         0), \quad x\in\T^3  
\end{equation*}
  gives a 
  global-in-time solution to the 3D Euler equations of the form
 \begin{equation}\label{2.75kolmogorov}
  u^{E}=\left(  \begin{array}{c}
        \sin\bigl(\lambda  x_1+x_2-t\sin x_3\bigr)\\
         -\lambda
         \sin\bigl(\lambda  x_1+x_2- t\sin x_3\bigr)+\sin x_3\\
         0
          \end{array}
          \right)\cdotp
\end{equation}
  Next consider a continuous function $\eta:[0,\frac{\pi}{2}]\to \R,$ which is compactly supported in $(0,\frac{\pi}{2})$. Moreover, consider 
  \begin{align}\label{sinweakzero}
  \int\limits_{0}^{\frac{\pi}{2}} \sin (t\sin x_3 ) \eta(x_3)\,dx_3
   &=  \int\limits_{0}^{1} \sin (ty)\, \frac{\eta(\arcsin(y))}{\sqrt{1-y^2}}\,dy\to 0,~ {\rm as}~t\to \infty,
  \end{align}
where  we used the fact that   $\{\sin (t y)\}_{t>0}$ converges weakly to zero in the space $L^2([0, 1])$.  
Using the same arguments used to establish \eqref{sinweakzero}, one can conclude that $\{
\sin(\lambda x_1+x_2+t\cos x_3)\}_{t>0}$ weakly converges to zero in $L^2(\T^3)$ as $t\rightarrow\infty.$ Hence, $u^{E}(\cdot,t)$ converges weakly to $u^{K}$ in $L^2(\T^3)$ as $t\rightarrow\infty$. To see that $u^{E}(\cdot,t)$ does not converge strongly to $u^{K}$ in $L^2(\T^3)$ as $t\rightarrow\infty$, note that the $L^{2}$ norm of $u^{E}(\cdot,t)$ is conserved in time and is initially not equal to the $L^{2}$ norm of $u^{K}$.  

\subsection{2.75D shear flows and Onsager supercritical inviscid limits}\label{s-2.75limit}

The solution $u^\nu: \R_+\times \T^3\to \R^3$ defined by \eqref{3D-nu-solu} satisfies the following energy  equality
 \begin{equation}\label{EE}
 \|u^\nu(t, \cdot)\|_{L^2}^2+2\nu\int\limits_0^t \int\limits_{\T^3}|\nabla u^{\nu}(s, x)|^2\,dxds=\|u_{\rm in}\|_{L^2}^2, ~{\rm for ~all~}t\geq0.
 \end{equation}
Such solutions are unique in the class of 2.75D flows sharing the same symmetry, yet may not necessarily be unique in the general class of weak Leray-Hopf solutions with the same initial data in $L^2(\T^3)$.\footnote{By weak-strong uniqueness, 2.75D shear flows \eqref{3D-nu-solu} with $f\in L^p$, $p\geq 3$, and $g\in C^\infty$ are unique amongst the general class of weak Leray-Hopf solutions.} Recall from \eqref{3D-Euler-solu} that the corresponding Euler solution with initial data \eqref{3D-nu-data} is given by \begin{equation}\label{3D-eu}
u^{E}(t, x)=(f(\lambda x_1+x_2+tg(x_3), x_3), -\lambda f(\lambda x_1+x_2+tg(x_3), x_3)-g(x_3), 0).
\end{equation} In this section, we investigate properties of $u^{\nu}$, $u^{E}$ and the vanishing viscosity limit.   
 
\begin{proposition}\label{Th-4}
 Let $f\in L^2(\T^2; \R)$ and  $g\in{C}^\infty(\T).$  Consider initial data $u_{\rm in}$ in the form of  \eqref{3D-nu-data} with associated $f, g.$ 
Suppose that  $u^\nu$ is  the  global-in-time  solution  given by \eqref{3D-nu-solu} to the  problem \eqref{NS} ($\nu>0$) with initial data $u_{\rm in}$, and $u^E$ is  the  global-in-time  solution given by \eqref{3D-eu} for  the 3D Euler equations with initial data $u_{\rm in}$.

The above set up implies that the following holds true:
\begin{enumerate}[label=(\arabic*)]
\item $\forall ~T>0, u^\nu\to u^E$ in $L^2((0, T)\times\T^3)$ as $\nu\to0.$
\item (pointwise convergence) $\forall~t \geq 0, u^\nu(t, \cdot)\to u^E(t, \cdot)$ in $L^2(\T^3)$ as $\nu\to0.$  
\item There is an absence of anomalous dissipation in the vanishing viscosity limit. Namely, for all $t\geq0:$
\begin{equation}\label{Ab-diss}
\lim_{\nu\to 0}\,\nu\int\limits_0^t\int\limits_{\T^3}|\nabla u^\nu(s, x)|^2\,dxds=0.
\end{equation}
\item Let $f\in L^3(\T^2; \R)$ then $u^E$ satisfies the local energy balance. Namely for all positive $\varphi\in {C}^\infty(\R_+\times\T^3; \R)$ and $t\geq 0$
\begin{multline}\label{LEB}
\qquad\int\limits_{\T^3} |u^E(t, x)|^2\varphi(t, x)\,dx-\int\limits_{\T^3} |u_{\rm in}(x)|^2\varphi(0, x)\,dx\\
=\int\limits_0^t\int\limits_{\T^3}\partial_t\varphi|u^E|^2+|u^E|^2u^E\cdot\nabla\varphi\,dxds.
\end{multline}
\end{enumerate}
\end{proposition}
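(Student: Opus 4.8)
The plan is to reduce every claim to a statement about the scalar profiles in \eqref{3D-nu-solu} and \eqref{3D-eu}. Writing $\theta:=\lambda x_1+x_2$ and using that for each fixed $x_1$ the map $x_2\mapsto\lambda x_1+x_2$ is a measure-preserving shift of $\T$, one has
\[
\|u^\nu(t,\cdot)-u^E(t,\cdot)\|_{L^2(\T^3)}^2=|\T|\Big((1+\lambda^2)\|F_\nu(t,\cdot)-F_0(t,\cdot)\|_{L^2(\T^2)}^2+\|e^{\nu t\partial_3^2}g-g\|_{L^2(\T)}^2\Big),
\]
where $F_0(t,y_1,y_2):=f(y_1+tg(y_2),y_2)$ is the unique $L^2$ solution of $\partial_tF_0-g(y_2)\partial_1F_0=0$ with $F_0(0)=f$ (the vector field $(-g(y_2),0)$ on $\T^2$ is smooth and divergence free, and its flow is the shear $(y_1,y_2)\mapsto(y_1-tg(y_2),y_2)$). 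The term involving $g$ is harmless: by Lemma \ref{Le-heat}, $\|e^{\nu t\partial_3^2}g-g\|_{L^\infty(\T)}\le\nu t\|g''\|_{L^\infty}\to0$, and $\|e^{\nu t\partial_3^2}g\|_{L^2}\le\|g\|_{L^2}$. Hence (1) and (2) come down to $F_\nu(t,\cdot)\to F_0(t,\cdot)$ in $L^2(\T^2)$ for each $t\ge0$, the integrated statement (1) then following by dominated convergence in $t$ from the uniform bound below.

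To prove this I would first test \eqref{DDheat} against $F_\nu$. Since $a_\nu:=e^{\nu t\partial_2^2}g$ does not depend on $y_1$, the drift term $\langle a_\nu\partial_1F_\nu,F_\nu\rangle$ vanishes, so $\tfrac{d}{dt}\|F_\nu(t,\cdot)\|_{L^2(\T^2)}^2\le0$ and $\|F_\nu(t,\cdot)\|_{L^2}\le\|f\|_{L^2}$ uniformly in $\nu,t$; moreover $\partial_tF_\nu=a_\nu\partial_1F_\nu+\nu\big((\lambda^2+1)\partial_1^2+\partial_2^2\big)F_\nu$ is uniformly bounded in $L^\infty((0,T);H^{-2}(\T^2))$ (again using that $a_\nu$ is independent of $y_1$). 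Along a subsequence $F_\nu$ then converges weakly-$*$ in $L^\infty((0,T);L^2)$ and strongly in $C([0,T];H^{-2})$ to some $\bar F$ with $\bar F(0)=f$; passing to the limit in \eqref{DDheat} (the right-hand side tends to $0$ in $\mathcal D'$, and $a_\nu\to g$ in $C^0$ while $\partial_1F_\nu\rightharpoonup\partial_1\bar F$) gives $\partial_t\bar F-g(y_2)\partial_1\bar F=0$, so by uniqueness of the transport equation $\bar F=F_0$ and the whole family converges. Finally, for fixed $t$, $F_\nu(t,\cdot)\rightharpoonup F_0(t,\cdot)$ in $L^2(\T^2)$ while $\limsup_\nu\|F_\nu(t,\cdot)\|_{L^2}\le\|f\|_{L^2}=\|F_0(t,\cdot)\|_{L^2}$ (the equality by the measure-preserving shear), so weak lower semicontinuity forces $\|F_\nu(t,\cdot)\|_{L^2}\to\|F_0(t,\cdot)\|_{L^2}$ and hence strong $L^2$ convergence; this is (2), and (1). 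Claim (3) is then immediate: by \eqref{EE}, $2\nu\int_0^t\!\int_{\T^3}|\nabla u^\nu|^2=\|u_{\rm in}\|_{L^2}^2-\|u^\nu(t,\cdot)\|_{L^2}^2$, and by (2) the right-hand side tends to $\|u_{\rm in}\|_{L^2}^2-\|u^E(t,\cdot)\|_{L^2}^2$, which vanishes because a direct computation gives $\|u^E(t,\cdot)\|_{L^2(\T^3)}^2=|\T|\int_{\T^2}\big((1+\lambda^2)f^2+2\lambda fg+g^2\big)\,dy$, independent of $t$.

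For (4) the key point---and the reason $f\in L^3$ (rather than an Onsager-critical assumption) suffices---is that $u^E$ is approximated in the right $L^3$-based topology by smooth $2.75$D Euler flows, so that no commutator estimate is needed. I would mollify $f$ on $\T^2$, $f_\epsilon:=f\ast\rho_\epsilon\to f$ in $L^3(\T^2)$ with $f_\epsilon\in C^\infty$, and set $u^E_\epsilon:=(f_\epsilon(\theta+tg(x_3),x_3),\,-\lambda f_\epsilon(\theta+tg(x_3),x_3)-g(x_3),\,0)$. As in Remark \ref{rem.2.75Euler}, a direct check shows $u^E_\epsilon$ is a smooth, divergence-free, \emph{pressureless} solution of Euler, $\partial_tu^E_\epsilon+u^E_\epsilon\cdot\nabla u^E_\epsilon=0$; dotting with $u^E_\epsilon$ and using $\div u^E_\epsilon=0$ yields the pointwise identity $\partial_s|u^E_\epsilon|^2+\div(|u^E_\epsilon|^2u^E_\epsilon)=0$, which after multiplication by $\varphi$ and integration over $(0,t)\times\T^3$ gives exactly \eqref{LEB} with $u^E_\epsilon$ in place of $u^E$. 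Letting $\epsilon\to0$, the measure-preserving change of variables in $(x_1,x_2)$ gives, uniformly in $s\in[0,t]$, $\big\||u^E_\epsilon|^2-|u^E|^2\big\|_{L^{3/2}(\T^3)}\lesssim\|f_\epsilon-f\|_{L^3}\big(\|f_\epsilon\|_{L^3}+\|f\|_{L^3}+\|g\|_{L^3}\big)\to0$ and $\big\||u^E_\epsilon|^2u^E_\epsilon-|u^E|^2u^E\big\|_{L^1(\T^3)}\lesssim\|f_\epsilon-f\|_{L^3}\big(\|f_\epsilon\|_{L^3}^2+\|f\|_{L^3}^2+\|g\|_{L^3}^2\big)\to0$; since $\varphi,\partial_t\varphi,\nabla\varphi\in L^\infty$, every term in \eqref{LEB} passes to the limit, which yields (4).

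The step I expect to require the most care is the upgrade from weak to strong convergence in (1)--(2) in the presence of the $\nu$-dependent drift $e^{\nu t\partial_2^2}g$ in \eqref{DDheat}: one must identify the weak limit as the unique solution of the limiting transport equation and then match $L^2$ norms via conservation along the shear flow. By contrast, (4), though it superficially resembles an Onsager-type statement, becomes routine once one observes that mollifying $f$ produces genuine smooth solutions and that $f\in L^3$ is precisely what is needed to push the cubic nonlinearity to the limit.
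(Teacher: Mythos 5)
Your proof is correct, and for items (1)--(3) it takes a genuinely different and more self-contained route than the paper. The paper proves (1) before (2), appealing to external references for the two main ingredients: the weak-$\ast$ convergence $u^\nu\overset{\ast}{\rightharpoonup}u^E$ in $L^\infty(0,T;L^2)$ is obtained ``following the same arguments as \cite[Theorem 5]{BTW}'' (a compactness argument at the level of the vector field $u^\nu$), and the conservation $\|u^E(t,\cdot)\|_{L^2}=\|u_{\rm in}\|_{L^2}$ is quoted as ``arguments along the same lines as \cite{BT}''. It also separately invokes a bound on $\partial_t u^\nu$ in $L^2_T W^{-1,2}$ and a diagonalisation argument to get pointwise-in-time weak convergence. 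You instead descend immediately to the scalar profile $F_\nu$ solving \eqref{DDheat}, obtain (2) first via an Aubin--Lions argument for the single scalar PDE (using crucially that the drift $e^{\nu t\partial_2^2}g$ is $y_1$-independent, which both kills the advective term in the $L^2$ energy estimate and makes $a_\nu\partial_1 F_\nu=\partial_1(a_\nu F_\nu)$ bounded in $H^{-1}$), identify the weak limit as the unique solution of the limit transport equation, and upgrade to strong convergence by matching $L^2$ norms against the conserved $\|F_0(t,\cdot)\|_{L^2}=\|f\|_{L^2}$. Item (1) is then a corollary by dominated convergence, and (3) is obtained exactly as in the paper. For (4) your argument is essentially identical to the paper's: mollify $f$ in $L^3$, observe that $u^E_\epsilon$ is a smooth \emph{pressureless} Euler solution so the local balance holds without any defect term, and pass to the limit using that the cubic and quadratic expressions are controlled by $\|f_\epsilon-f\|_{L^3}$ after the measure-preserving change of variables. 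What your route buys is that everything is proved from scratch at the level of a linear scalar advection--diffusion equation, avoiding the citations; what the paper's route buys is brevity and that the argument stays at the level of the velocity field.

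One small imprecision: the displayed ``reduction'' identity
\[
\|u^\nu(t,\cdot)-u^E(t,\cdot)\|_{L^2(\T^3)}^2=|\T|\Big((1+\lambda^2)\|F_\nu-F_0\|_{L^2(\T^2)}^2+\|e^{\nu t\partial_3^2}g-g\|_{L^2(\T)}^2\Big)
\]
is not exactly an equality. Writing $A:=F_\nu-F_0$ and $B:=e^{\nu t\partial_3^2}g-g$, the square $|u^\nu-u^E|^2=(1+\lambda^2)A^2+2\lambda AB+B^2$ contributes a cross term $2\lambda\int AB\,dx$ which does not vanish in general (its integral in $y_1$ reduces to $\int_\T (e^{\nu t\partial_2^2}\bar f-\bar f)\,B$ where $\bar f(y_2):=\int f\,dy_1$), and the normalisation of $|\T|$ on the $g$-term is also off. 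Neither matters: by Cauchy--Schwarz the cross term is $O(\|A\|_{L^2}\|B\|_{L^2})$, so it is dominated by the two terms you control, and the conclusion $u^\nu\to u^E$ in $L^2$ if and only if $F_\nu\to F_0$ in $L^2$ and $e^{\nu t\partial_3^2}g\to g$ in $L^2$ survives. It would be cleaner to write the reduction as a two-sided inequality rather than an identity.
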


\begin{proof}[Proof of Proposition \ref{Th-4}]
We first prove item $(1).$ Following the same arguments as \cite[Theorem 5]{BTW}, we see that
\begin{equation}\label{2.75vv}
u^{\nu}\overset{\ast}{\rightharpoonup}  u^{E}\quad\textrm{in}\quad L^{\infty}(0,T; L^{2}(\mathbb{T}^3))\quad\mbox{as}\ \nu\rightarrow 0.
\end{equation}
Using arguments along the same lines as \cite{BT}, we have that for all $t\geq 0$, 
\begin{equation}\label{e.equalL2}
\|u^E(\cdot,t)\|_{L^2}^2=\|u_{\rm in}\|_{L^2}^2.
\end{equation}
Thanks to \eqref{2.75vv}, one has that
\begin{equation}\label{Th4-1}
u^\nu \rightharpoonup u^E~{\rm weakly}~{\rm in}~L^2((0, T)\times \T^3),~{\rm as}~\nu\to 0.
\end{equation}
Then it is enough to show 
\begin{align}\label{Th4-2}
\int\limits_0^T\int\limits_{\T^3} |u^\nu(s, x)|^2\,dxds\to \int\limits_0^T\int\limits_{\T^3}|u^E(s, x)|^2\,dxds,\quad {\rm as}~\nu\to0.
\end{align}
We have  the integrated energy balances:
 \begin{align*} 
   &\int\limits_0^T\int\limits_{\T^3}|u^\nu(t, x)|^2\,dxdt+2\nu\int\limits_0^T\int\limits_0^t \int\limits_{\T^3}|\nabla u^{\nu}(s, x)|^2\,dxdsdt=T\|u_{\rm in}\|_{L^2}^2,\\
   &\int\limits_0^T\int\limits_{\T^3}|u^E(t, x)|^2\,dxdt=T\|u_{\rm in}\|_{L^2}^2.
 \end{align*}
From the first energy balance above,
 \begin{multline*} 
 \limsup_{\nu\to0}\left( \int\limits_0^T\int\limits_{\T^3}|u^\nu(t, x)|^2\,dxdt \right)\\ \leq  \limsup_{\nu\to0}\left(\int\limits_0^T\int\limits_{\T^3}|u^\nu(t, x)|^2\,dxdt +2\nu\int\limits_0^T\int\limits_0^t \int\limits_{\T^3}|\nabla u^{\nu}(s, x)|^2\,dxdsdt   \right)=T\|u_{\rm in}\|_{L^2}^2.
 \end{multline*}
From \eqref{Th4-1} and \eqref{e.equalL2},
 \begin{align*} 
T\|u_{\rm in}\|_{L^2}^2= \int\limits_0^T\int\limits_{\T^3}|u^E(t, x)|^2\,dxdt \leq \liminf_{\nu\to0}\left( \int\limits_0^T\int\limits_{\T^3}|u^\nu(t, x)|^2\,dxdt \right).
 \end{align*}
Thus \eqref{Th4-2} is satisfied and we get $u^\nu\to u^E$ in $L^2((0, T)\times\T^3)$ as $\nu\to0.$

To prove item $(2),$ we proceed similarly as for item $(1).$ First, due to \eqref{3D-nu-solu}-\eqref{DDheat} and the energy equality \eqref{EE}, we have that
\begin{equation}\label{timenegder}
\sup_{\nu}\|\partial_{t} u^{\nu}\|_{L^{2}(0,T; W^{-1,2}(\mathbb{T}^3))}<\infty.
\end{equation}
This, together\footnote{We refer to \cite[page 104]{gregory2014lecture} where a similar argument is used.} with a classical diagonalisation argument argument and \eqref{2.75vv}, yields that 
\begin{equation}\label{weakconvpw}
\forall t\geq 0\quad u^{\nu}(\cdot,t)\rightharpoonup u^{E}(\cdot,t)\quad\textrm{in}\,\, L^{2}(\mathbb{T}^3).
\end{equation}
 Using \eqref{EE} again, we write
 \begin{multline*} 
 \limsup_{\nu\to0}  \int\limits_{\T^3}|u^\nu(t, x)|^2\,dx \\  \leq  \limsup_{\nu\to0} \bigg( \int\limits_{\T^3}|u^\nu(t, x)|^2\,dx+2\nu\int\limits_0^t \int\limits_{\T^3}|\nabla u^{\nu}(s, x)|^2\,dxds   \bigg)=\|u_{\rm in}\|_{L^2}^2.
 \end{multline*}
By virtue of \eqref{weakconvpw} and the energy balance \eqref{e.equalL2} for the associated Euler flow,
 we  write
 \begin{align*} 
 \|u_{\rm in}\|_{L^2}^2= \int\limits_{\T^3}|u^E(t, x)|^2\,dx\leq \liminf_{\nu\to0}   \int\limits_{\T^3}|u^\nu(t, x)|^2\,dx\leq \|u_{\rm in}\|_{L^2}^2.
 \end{align*}
Thus
\begin{align*} 
 \int\limits_{\T^3} |u^\nu(t, x)|^2\,dx \to  \int\limits_{\T^3} |u^E(t, x)|^2\,dx,\quad {\rm as}~\nu\to0,
\end{align*}
which together with \eqref{weakconvpw} gives item $(2).$

For the proof of item $(3),$ notice that for any $t\geq0,$
\begin{align*}
\int\limits_{\T^3}|u^\nu(t, x)|^2\,dx+2\nu\int\limits_0^t \int\limits_{\T^3}|\nabla u^{\nu}(s, x)|^2\,dxds =\|u_{\rm in}\|_{L^2}^2=\int\limits_{\T^3}|u^E(t, x)|^2\,dx.
\end{align*}
It is then easy to find that
\begin{align*}
\lim_{\nu\to0}\left(\int\limits_{\T^3}|u^\nu(t, x)|^2\,dx+2\nu\int\limits_0^t \int\limits_{\T^3}|\nabla u^{\nu}(s, x)|^2\,dxds\right) =\int\limits_{\T^3}|u^E(t, x)|^2\,dx.
\end{align*} 
Combining with item $(2)$ implies \eqref{Ab-diss}.

Let us now prove item $(4)$. The proof includes two steps: at first, we mollify $f$  to get a series of   smooth solutions in the form of \eqref{3D-Euler-solu} and write down the local energy equalities for these  smooth solutions, then we pass to the limit by using that $f\in L^3$  (which is sharp as an assumption in view of the nonlinear term).

Note that since $f\in L^3(\T^2; \R),$ there exists   sequence  $\{f_{k}\}_{k\in\mathbb{N}}\in{C}^\infty(\T^2; \R)$ such that $\|f_k-f\|_{L^3}\to0, ~k\to\infty.$  Define
\begin{equation*} 
u^{E, k}(t, x)=(f_k(\lambda x_1+x_2+tg(x_3), x_3), -\lambda f_k(\lambda x_1+x_2+tg(x_3), x_3)-g(x_3), 0).
\end{equation*}
By Fubini's theorem, one has
\begin{multline*} 
\|u^{E, k}(t, \cdot)-u^E(t, \cdot)\|_{L^3}^3\\\leq (|\lambda|^3+1)\int\limits_{\T^3}|f_k(\lambda x_1+x_2+tg(x_3), x_3)-f(\lambda x_1+x_2+tg(x_3), x_3)|^3\,dx\\
=2\pi(|\lambda|^3+1)\|f_k-f\|_{L^3}^3\to0,~{\rm as}~k\to\infty
\end{multline*}
and
\begin{align*}
\lim_{k\to \infty}\|u^{E, k}(0, \cdot)-u_{\rm in}( \cdot)\|_{L^2}=0.
\end{align*}
Now, since $u^{E, k}$ is smooth on $\R_+\times \T^3$ so for any positive $\varphi\in{C}^\infty(\R_+\times\T^3)$ and $t\geq0,$
 \begin{multline*} 
 \int\limits_{\T^3} |u^{E, k}(t, x)|^2\varphi(t, x)\,dx\\=\int\limits_{\T^3} |u^{E, k}(0, x)|^2\varphi(0, x)\,dx+\int\limits_0^t\int\limits_{\T^3}\partial_t\varphi|u^{E, k}|^2+|u^{E, k}|^2u^{E, k}\cdot\nabla\varphi\,dxds.
\end{multline*}
Using above convergence properties, we pass to the limit as $k\to\infty,$ and  obtain  the energy balance \eqref{LEB}.
\end{proof}
Note that the third item in Proposition \ref{Th-4} only gives convergence of energy pointwise. Below we show that if the Euler solution has some additional Onsager \emph{supercritical} Sobolev regularity, then one obtains a uniform convergence of the energy (i.e. $u^\nu\to u^E$ in $L^\infty(\R_+; L^2(\T^3))$) and a rate of vanishing of the anomalous dissipation. Proposition \ref{Th-4} also implies that the sufficient conditions in \cite{C86} and \cite{Masmoudi07} on the Euler flow for the inviscid limit to hold in $L^\infty(\R_+; L^2(\T^3))$ are not necessary  conditions. For 2D results on the vanishing viscosity and conservation of energy in Onsager supercritical regimes we refer to \cite{cheskidov2015onsager,lanthaler2021conservation}. 

In particular,  we have the following result. 
\begin{proposition}\label{Th-5}
 Let $f\in {H}^\ell(\T^2; \R)$ with $\ell\in(0, \frac{5}{6}).$    Consider initial data $u_{\rm in}$ in the form of  \eqref{3D-nu-data} with associated $f$ and $g\in C^{\infty}(\mathbb{T})$. 
Suppose that  $u^\nu$ is  the  global-in-time  solution  given by \eqref{3D-nu-solu} for the  problem \eqref{NS} ($\nu>0$) with initial data $u_{\rm in}$, and $u^E$ is  the  global-in-time  solution given by \eqref{3D-eu} for  the 3D Euler equations with initial data $u_{\rm in}$.

The above set up implies that the following holds true:
\begin{itemize}
\item[(1)] For $t\geq 0$ and $\nu\in (0,1)$,
$$\nu\int\limits_0^t\int\limits_{\T^3}|\nabla u^\nu|\,dxds\leq \nu^{\ell} C(\lambda,\ell,t,\|g'\|_{L^{\infty}},\|f\|_{H^{\ell}}).$$

\item[(2)] If $f\in {H}^\ell(\T^2; \R)$ with $\ell\in[\frac{1}{2}, \frac{5}{6})$, then 
\begin{align*}
u^\nu\to u^E\quad{\rm{in}}~~{C}([0, T]; L^2(\T^3)), ~\forall~T>0
\end{align*}
and for any $\ell'\in[0, \ell)$ and $\nu\in(0,1)$,
\begin{equation}\label{vvrate}
\sup_{t\in[0, T]} \|u^\nu(t,\cdot)-u^E(t,\cdot)\|_{\dot{H}^{\ell'}}\leq \nu^{\frac{1}{4}(1-\frac{\ell'}{\ell})} C(\lambda,\ell,\ell',T,\|g\|_{W^{3,\infty}},\|f\|_{H^{\ell}}).
\end{equation}
\end{itemize}
\end{proposition}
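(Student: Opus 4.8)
The plan is to reduce everything to the two scalar profiles driving the flows. Put $y_1:=\lambda x_1+x_2$, $y_2:=x_3$; then $u^\nu$ is built from $F_\nu$ solving \eqref{DDheat}, i.e. $\partial_t F_\nu=b_\nu\,\partial_1 F_\nu+\nu L F_\nu$ on $\T^2$, with $L:=(\lambda^2+1)\partial_1^2+\partial_2^2$ and $b_\nu:=b_\nu(t,y_2):=e^{\nu t\partial_2^2}g$, while $u^E$ is built from $F^E(t,y):=f(y_1+tg(y_2),y_2)$, which solves $\partial_t F^E=g(y_2)\,\partial_1 F^E$. Two structural facts are used repeatedly. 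First, for every $t$ the planar vector field $(b_\nu(t,y_2),0)$ is divergence free, so the transport term is skew in $L^2$ up to contributions carrying one $y_2$-derivative of $b_\nu$; since $e^{\nu t\partial_2^2}$ contracts $L^\infty$ and commutes with $\partial_2$, such contributions are controlled by $\|g'\|_{L^\infty}$ alone. Second, $F^E=f\circ\Phi_t$ with $\Phi_t(y_1,y_2)=(y_1+tg(y_2),y_2)$ a volume-preserving bi-Lipschitz diffeomorphism of $\T^2$ obeying $\|D\Phi_t^{\pm1}\|_{L^\infty}\le 1+t\|g'\|_{L^\infty}$, so that $\|F^E(t)\|_{L^2}=\|f\|_{L^2}$ and $\|F^E(t)\|_{\dot H^s}\lesssim(1+t\|g'\|_{L^\infty})^{1+s}\|f\|_{\dot H^s}$ for $s\in[0,1]$.

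For item $(1)$ (which we read, as the surrounding discussion dictates, with $|\nabla u^\nu|^2$; the $|\nabla u^\nu|$ version then follows by Cauchy--Schwarz in $x$ and $s$), the explicit form \eqref{3D-nu-solu} gives $\nu\int_0^t\!\int_{\T^3}|\nabla u^\nu|^2\,dx\,ds\le C_\lambda\,\nu\!\int_0^t\|\nabla_y F_\nu(s)\|_{L^2(\T^2)}^2\,ds+\mathcal R_\nu$, where $\mathcal R_\nu$ is the contribution of the factor $e^{\nu t\partial_3^2}g$. Using $\|\partial_3 e^{\nu s\partial_3^2}g\|_{L^\infty}\le\|g'\|_{L^\infty}$, Cauchy--Schwarz in time, and $\nu\int_0^t\|\partial_3 e^{\nu s\partial_3^2}h\|_{L^2}^2\,ds=\tfrac12\sum_m(1-e^{-2\nu t m^2})|\hat h_m|^2\le\tfrac12(2\nu t)^\ell\|h\|_{\dot H^\ell}^2$ (by $1-e^{-x}\le x^\ell$), one finds $|\mathcal R_\nu|\le C(\lambda,\ell,t,\|g'\|_{L^\infty})\,\nu^\ell(1+\|f\|_{\dot H^\ell}^2)$. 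So it suffices to prove $\nu\int_0^t\|\nabla_y F_\nu\|_{L^2}^2\,ds\le C\,\nu^\ell\|f\|_{\dot H^\ell}^2$. Since \eqref{DDheat} is linear, split $f=P_{\le M}f+P_{>M}f$ with $M=\nu^{-1/2}$ and $F_\nu=F_\nu^{lo}+F_\nu^{hi}$ accordingly, and bound $\nu\int_0^t\|\nabla F_\nu\|^2\le 2\nu\int_0^t\|\nabla F_\nu^{lo}\|^2+2\nu\int_0^t\|\nabla F_\nu^{hi}\|^2$. The $L^2$ energy identity for \eqref{DDheat} (in which the transport term drops out by divergence-freeness) gives $2\nu\int_0^t\big((\lambda^2+1)\|\partial_1 F_\nu^{hi}\|^2+\|\partial_2 F_\nu^{hi}\|^2\big)\,ds\le\|P_{>M}f\|_{L^2}^2\le M^{-2\ell}\|f\|_{\dot H^\ell}^2=\nu^\ell\|f\|_{\dot H^\ell}^2$. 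For the low part, the $\dot H^1$ energy estimate: after applying $\partial_j$, the divergence-free structure kills all transport terms except $\langle(\partial_2 b_\nu)\partial_1 F_\nu^{lo},\partial_2 F_\nu^{lo}\rangle\le\|g'\|_{L^\infty}\|\nabla F_\nu^{lo}\|^2$, whence $\tfrac12\tfrac{d}{dt}\|\nabla F_\nu^{lo}\|^2\le\|g'\|_{L^\infty}\|\nabla F_\nu^{lo}\|^2$, so $\sup_{[0,t]}\|\nabla F_\nu^{lo}\|_{L^2}\le e^{\|g'\|_{L^\infty}t}\|\nabla P_{\le M}f\|_{L^2}\le e^{\|g'\|_{L^\infty}t}M^{1-\ell}\|f\|_{\dot H^\ell}$, and therefore $\nu\int_0^t\|\nabla F_\nu^{lo}\|^2\le\nu t\,e^{2\|g'\|_{L^\infty}t}M^{2(1-\ell)}\|f\|_{\dot H^\ell}^2=t\,e^{2\|g'\|_{L^\infty}t}\,\nu^\ell\|f\|_{\dot H^\ell}^2$ (the choice $M=\nu^{-1/2}$ is exactly the one balancing $\nu M^{2(1-\ell)}$ against $M^{-2\ell}$). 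This proves item $(1)$.

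For item $(2)$ it is enough to establish, uniformly for $t\in[0,T]$, the bounds $(\mathrm A)$ $\|D(t)\|_{L^2(\T^2)}\le\nu^{1/4}C$ and $(\mathrm B)$ $\|D(t)\|_{\dot H^\ell(\T^2)}\le C$, where $D:=F_\nu-F^E$ and $C=C(\lambda,T,\|g\|_{W^{3,\infty}},\|f\|_{H^\ell})$; indeed $(\mathrm A)$ yields $u^\nu\to u^E$ in $C([0,T];L^2(\T^3))$ (the $e^{\nu t\partial_3^2}g$-contribution to $\|u^\nu-u^E\|_{L^2(\T^3)}$ being $O(\nu)$ by Lemma \ref{Le-heat}), and interpolation of $(\mathrm A)$ with $(\mathrm B)$, $\|D(t)\|_{\dot H^{\ell'}}\le\|D(t)\|_{L^2}^{1-\ell'/\ell}\|D(t)\|_{\dot H^\ell}^{\ell'/\ell}$, gives \eqref{vvrate} for $\ell'\in[0,\ell)$. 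For $(\mathrm A)$, $D$ solves $\partial_t D=\nu LD+b_\nu\partial_1 D+\nu LF^E+(b_\nu-g)\partial_1 F^E$; in its $L^2$ energy estimate $\langle b_\nu\partial_1 D,D\rangle=0$, the term $\nu\langle LF^E,D\rangle$ is split by Young so that its $D$-part is absorbed into $\nu\langle LD,D\rangle\le0$ and the remainder is $\lesssim\nu\|\nabla F^E\|^2\lesssim\nu(1+t\|g'\|_{L^\infty})^2\|f\|_{\dot H^1}^2$, and $\langle(b_\nu-g)\partial_1 F^E,D\rangle\le\|b_\nu-g\|_{L^\infty}\|\partial_1 f\|_{L^2}\|D\|_{L^2}$ with $\|b_\nu-g\|_{L^\infty}\le\nu t\|g''\|_{L^\infty}$ (Lemma \ref{Le-heat}); Gr\"onwall then gives $\|D(t)\|_{L^2}\le\sqrt\nu\,C(\lambda,T,\|g\|_{W^{2,\infty}})\|f\|_{\dot H^1}$ for $f\in H^1$. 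For $f\in H^\ell$ with $\ell\in[\tfrac12,\tfrac56)$ one applies this to $P_{\le M}f$, uses $\|D[P_{>M}f](t)\|_{L^2}\le 2\|P_{>M}f\|_{L^2}$ for $P_{>M}f$, and optimises at $M=\nu^{-1/2}$, obtaining $\|D(t)\|_{L^2}\lesssim\nu^{\ell/2}\|f\|_{H^\ell}\le\nu^{1/4}\|f\|_{H^\ell}$. For $(\mathrm B)$, given the composition bound for $F^E$ it suffices to bound $\|F_\nu(t)\|_{\dot H^\ell}$ uniformly in $\nu$: applying $\Lambda^\ell:=|\nabla_y|^\ell$ to \eqref{DDheat} and pairing with $\Lambda^\ell F_\nu$, the diffusion is favourable and $\langle b_\nu\,\Lambda^\ell\partial_1 F_\nu,\Lambda^\ell F_\nu\rangle=0$, while $[\Lambda^\ell,b_\nu]\partial_1$ is a pseudodifferential operator of order $\ell<1$ (here $\ell<\tfrac56<1$ is used), bounded $H^\ell(\T^2)\to L^2(\T^2)$ with norm $\lesssim\|b_\nu\|_{W^{3,\infty}}\le\|g\|_{W^{3,\infty}}$; this yields $\tfrac12\tfrac{d}{dt}\|\Lambda^\ell F_\nu\|^2\le C\|g\|_{W^{3,\infty}}\big(\|F_\nu\|_{\dot H^\ell}^2+\|f\|_{L^2}^2\big)$, and Gr\"onwall gives $\sup_{[0,T]}\|F_\nu(t)\|_{\dot H^\ell}\le C(T,\|g\|_{W^{3,\infty}})\|f\|_{H^\ell}$, hence $(\mathrm B)$.

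The hard part, for item $(1)$, is the dissipation transverse to the drift, $\nu\int_0^t\|\partial_2 F_\nu\|_{L^2}^2\,ds$: this is only $O(1)$ from the bare energy identity and genuinely requires the fractional regularity of $f$; the device that makes it work using just $\|g'\|_{L^\infty}$ is the low/high splitting combined with the divergence-free structure of the drift, which keeps the $\dot H^1$ Gr\"onwall constant at $e^{C\|g'\|_{L^\infty}t}$. For item $(2)$ the delicate ingredient is the uniform-in-$\nu$ propagation of fractional regularity $(\mathrm B)$ — this is where the commutator estimate and the restriction $\ell<\tfrac56$ matter — whereas $(\mathrm A)$, and hence both the convergence and the rate, is comparatively robust.
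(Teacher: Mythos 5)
Your proposal is correct, and it takes a genuinely different route from the paper's for both items, so let me compare.

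For item (1), the paper establishes $L^2$ and $H^1$ energy inequalities for the operator $T_\nu:f\mapsto F_\nu$ and then invokes abstract interpolation theory for linear operators (citing \cite{HVVW16}, \cite{BL76}, \cite{AF03}) to reach $\dot H^\ell$-level estimates. You instead implement the interpolation by hand: you split $f=P_{\le M}f+P_{>M}f$ with $M=\nu^{-1/2}$, use the bare $L^2$ energy identity on the high part (which gives $\nu\int_0^t\|\nabla F_\nu^{hi}\|^2\le\|P_{>M}f\|^2\le M^{-2\ell}\|f\|_{\dot H^\ell}^2$) and a Gr\"onwall-controlled $\dot H^1$ estimate on the low part ($\nu\int_0^t\|\nabla F_\nu^{lo}\|^2\le\nu t e^{2t\|g'\|_{L^\infty}}M^{2(1-\ell)}\|f\|_{\dot H^\ell}^2$), balancing the two at $M=\nu^{-1/2}$. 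This is essentially the $K$-method done concretely and avoids the interpolation-theory citations. Both routes give the same rate and the same $\|g'\|_{L^\infty}$-dependence, and your reading of the displayed $|\nabla u^\nu|$ as $|\nabla u^\nu|^2$ (with the unsquared version recovered by Cauchy--Schwarz) is consistent with the paper's proof, which does bound the squared integral.

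For item (2), the paper writes the difference equation with the diffusion on $F_\nu$ (the smoother function) and, after moving $\Lambda^{1/2}$ onto $D$, needs the Onsager-range dissipation bound $\nu\int_0^t\|F_\nu\|_{\dot H^{3/2}}^2\,ds\lesssim(\nu t)^{\ell-1/2}$ (which is exactly where $\ell\ge1/2$ is used). You instead put the Laplacian on $F^E$, absorb half of $\nu\langle LF^E,D\rangle$ into the dissipation by Young, and observe that the remainder only needs $\|\nabla F^E\|_{L^2}$; you then regain the $H^\ell$ generality by the same $P_{\le M}/P_{>M}$ splitting. Your $L^2$ rate $\nu^{\ell/2}$ is in fact slightly better than the paper's $\nu^{1/8+\ell/4}$ for $\ell>1/2$; both reduce to $\nu^{1/4}$ in the statement. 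For the uniform $\dot H^\ell$ bound on $F_\nu$, the paper again uses interpolation theory (from $L^2$ and $\dot H^1$), whereas you prove it directly by applying $\Lambda^\ell$ to the equation and estimating the commutator $[\Lambda^\ell,b_\nu]\partial_1$ as an operator of order $\ell<1$ bounded in terms of $\|g\|_{W^{3,\infty}}$; this matches the paper's constant dependence but is more self-contained. The only cosmetic imprecisions are: (i) your parenthetical remark that ``$\ell<\tfrac56$'' is used in the commutator bound --- what is actually needed there is $\ell<1$, the $5/6$ threshold being Onsager-motivated and not a technical necessity here any more than in the paper's proof; and (ii) at the very end you should, as the paper does, add the $\dot H^{\ell'}$ contribution of $e^{\nu t\partial_3^2}g-g$ (which is $O(\nu)$ by Lemma \ref{Le-heat} and hence harmless) to the interpolated bound on $D$ before concluding \eqref{vvrate}. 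Neither affects correctness.
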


 \begin{proof}
 First let us establish item $(1)$. Fix $\nu>0$. For $f\in L^{2}(\mathbb{T}^2;\mathbb{R})$, let $T_{\nu}(f):= F_{\nu}$ be such that $F_{\nu}$ satisfies \eqref{DDheat}. Then 
 $$T_{\nu}:L^{2}(\mathbb{T}^2)\rightarrow L^{2}(0,\infty; \dot{H}^{1}(\mathbb{T}^2))\cap C([0,\infty; L^{2}(\mathbb{T}^2))$$ is a well-defined linear operator, due to the uniqueness of solutions to \eqref{DDheat} in the class $ L^{2}(0,\infty; \dot{H}^{1}(\mathbb{T}^2))\cap C([0,\infty; L^{2}(\mathbb{T}^2))$ with $L^{2}$ initial data. Furthermore, an $L^{2}$ energy estimate on \eqref{DDheat} yields
 \begin{equation}\label{TEEL2}
 \|T_{\nu}(f)\|_{L^{\infty}(0,\infty; L^{2})}, \nu^{\frac{1}{2}}\|T_{\nu}(f)\|_{L^{2}(0,\infty; \dot{H}^{1})}\leq \|f\|_{L^{2}}. 
 \end{equation}
 When $f\in H^{1}(\mathbb{T}^2)$, applying an $H^{1}$ energy estimate to \eqref{DDheat}  and then Gronwall's inequality yields 
 \begin{equation}\label{TEEH1}
  \|T_{\nu}(f)\|_{L^{\infty}(0,\infty; \dot{H}^{1})}, \nu^{\frac{1}{2}}\|T_{\nu}(f)\|_{L^{2}(0,\infty; \dot{H}^{2})}\lesssim e^{ct\|g'\|_{L^{\infty}}}\|f\|_{H^{1}}.
 \end{equation}
 Here, $c>0$ is a universal constant. Using \eqref{TEEL2}-\eqref{TEEH1}, we can apply \cite[Theorem 2.2.10]{HVVW16}, \cite[Theorem 6.45]{BL76} and the interpolation theory for linear operators \cite[Theorem 7.23]{AF03}. This yields that for every $f\in {H}^{\ell}(\mathbb{T}^2)$ ($\ell\in (0,\tfrac{5}{6})$) and $t\geq 0$
 \begin{equation}\label{HlL2est}
 \nu^{\frac{1}{2}}\|F_{\nu}\|_{L^{2}(0,t;\dot{H}^{1+\ell})}\lesssim e^{c\ell t\|g'\|_{L^{\infty}}}\|f\|_{H^{\ell}}.
 \end{equation}
 Applying similar arguments pointwise in time to $T_{\nu}(f)$ also yields that for every $f\in {H}^{\ell}(\mathbb{T}^2)$ ($\ell\in (0,\tfrac{5}{6})$) and $t\geq 0$
 \begin{equation}\label{HlLinfinftyest}
 \|F_{\nu}\|_{L^{\infty}(0,t;\dot{H}^{\ell})}\lesssim e^{c\ell t\|g'\|_{L^{\infty}}}\|f\|_{H^{\ell}}.
 \end{equation}
 Using the interpolation inequality for Sobolev spaces, H\"{o}lder's inequality and \eqref{HlL2est}-\eqref{HlLinfinftyest} gives that for any $f\in {H}^{\ell}(\mathbb{T}^2)$ ($\ell\in (0,\tfrac{5}{6})$) and $t\geq 0$:
 \begin{align}
 \nu \int\limits_{0}^{t}\int\limits_{\mathbb{T}^2} |\nabla F_{\nu}|^2 dxds&\leq \nu\int\limits_{0}^{t} \|F_{\nu}(s,\cdot)\|_{\dot{H}^{\ell}}^{2\ell}\|F_{\nu}(s,\cdot)\|_{\dot{H}^{1+\ell}}^{2(1-\ell)} ds\notag\\
 &\leq (\nu t)^{\ell} \|F_{\nu}\|_{L^{\infty}(0,t;\dot{H}^{\ell})}^{2\ell} \Big(\nu\int\limits_{0}^{t} \|F_{\nu}(s,\cdot)\|^2_{\dot{H}^{1+\ell}} ds\Big)^{1-\ell} \nonumber
 \\ 
 &\lesssim_{\ell} (\nu t)^{\ell}e^{2c\ell t\|g'\|_{L^{\infty}}}\|f\|_{H^{\ell}}^2. \label{H1int}
 \end{align} 
 Recall that $u^{\nu}$ is given by \eqref{3D-nu-solu}, where $\lambda\in\mathbb{Z}\setminus\{0\}.$  Together with \eqref{H1int}, this gives
 \begin{align*}
 \nu\int\limits_{0}^{t}\int\limits_{\T^3} |\nabla u^{\nu}|^2 dxds&\lesssim \lambda^4 \nu \int\limits_{0}^{t}\int\limits_{\T^3} |\nabla F_{\nu}|^2 dxds+\nu\int\limits_{0}^{t}\int\limits_{\T} |\nabla e^{\nu s\partial_{3}^2}g|^2 dxds \\
 &\lesssim_{\ell} (\nu t)^{\ell}e^{2c\ell t\|g'\|_{L^{\infty}}}\|f\|_{H^{\ell}}^2+t\nu \|g'\|_{L^{2}}^2.
 \end{align*}
 This establishes item $(1)$.
 
 Let us now prove item $(2)$. Define
 $$F(t,x_1,x_2):= f(x_1+tg(x_2), x_2),$$
 which is a distributional solution to
 \begin{equation}\label{Dtrans}
\left\{\begin{aligned}
  &\partial_t F  -g (y_2)\,\partial_1 F  = 0 \quad {\rm in} ~~\T^2\times\R_+ \\
 & F (0, y_1, y_2)=f(y_1, y_2).
\end{aligned}\right.
\end{equation}
Furthermore, by Fubini's theorem
\begin{equation}\label{L2Fest}
\|F(t,\cdot)\|_{L^{2}}=\|f\|_{L^2}\quad\forall t\geq 0.
\end{equation}
Similar arguments as those used to establish \eqref{HlLinfinftyest} give that for all $f\in H^{\ell}$ and $t\geq 0$
\begin{equation}\label{HlFest}
\|F\|_{L^{\infty}(0,t;\dot{H}^{\ell})}\lesssim \max(1, t\|g'\|_{L^{\infty}})^{\ell}\|f\|_{H^{\ell}}.
\end{equation}
Hence, using this and \eqref{HlLinfinftyest} gives
\begin{equation}\label{Hldiffest}
\|F_{\nu}-F\|_{L^{\infty}(0,t;\dot{H}^{\ell})}\lesssim (\max(1, t\|g'\|_{L^{\infty}})^{\ell}+e^{c\ell t\|g'\|_{L^{\infty}}})\|f\|_{H^{\ell}}
\end{equation}
Using \eqref{HlFest}, \eqref{L2Fest} and the interpolation inequality for Sobolev spaces, one deduces that
\begin{equation}\label{H1/2Fest}
\|F\|_{L^{\infty}(0,t;\dot{H}^{\frac{1}{2}})}\lesssim_{\ell}\max(1, t\|g'\|_{L^{\infty}})^{\frac{1}{2}}\|f\|_{H^{\ell}}.
\end{equation}
Similarly, \eqref{TEEL2} and \eqref{HlLinfinftyest} imply
\begin{equation}\label{H1/2Fnuest}
\|F_{\nu}\|_{L^{\infty}(0,t;\dot{H}^{\frac{1}{2}})}\lesssim_{\ell}e^{\frac{c t\|g'\|_{L^{\infty}}}{2}}\|f\|_{H^{\ell}}.
\end{equation}
Using the interpolation inequality for Sobolev spaces and H\"{o}lder's inequality, we have
\begin{align*}
\nu\int\limits_{0}^{t} \|F_{\nu}\|_{\dot{H}^{\frac{3}{2}}}^2 ds&\leq \nu\int\limits_{0}^{t} \|F_{\nu}\|_{\dot{H}^{\ell}}^{2(\ell-\frac{1}{2})}\|F_{\nu}\|_{\dot{H}^{1+\ell}}^{2(\frac{3}{2}-\ell)} ds\\
&\leq \|F_{\nu}\|_{L^{\infty}(0,t; \dot{H}^{\ell})}^{2(\ell-\frac{1}{2})}\Big(\nu\int\limits_{0}^{t} \|F_{\nu}\|_{\dot{H}^{1+\ell}}^2 ds\Big)^{\frac{3}{2}-\ell} (t\nu)^{\ell-\frac{1}{2}}.
\end{align*}
Using this and \eqref{HlL2est}-\eqref{HlLinfinftyest} gives
\begin{equation}\label{L2H3/2est}
\nu\int\limits_{0}^{t} \|F_{\nu}\|_{\dot{H}^{\frac{3}{2}}}^2 ds\leq e^{c\ell t\|g'\|_{L^{\infty}}}\|f\|_{H^{\ell}}^2(t\nu)^{\ell-\frac{1}{2}}.
\end{equation}
Next, we consider the equation satisfied by $F_{\nu}-F$:
 \begin{equation}\label{Diffeq}
\left\{\begin{aligned}
  \partial_t (F_{\nu}-F)  -g (y_2)\,\partial_1 (F_{\nu}-F)  =& (e^{t\nu\partial_{y_2}^2}g-g(y_2))\partial_{1} F_{\nu}\\
  &+{\nu}\bigl((\lambda^2+1)\partial_{1}^2+ \partial_{2}^2\bigr) F_{\nu} \quad {\rm in} ~~\T^2\times\R_+ \\
  (F_{\nu}-F) (0, y_1, y_2)=&0.
\end{aligned}\right.
\end{equation}
Performing an $L^{2}$ energy estimate\footnote{All subsequent estimates can be rigorously justified by approximating $f\in H^{\ell}$ by smooth $f_{k}\rightarrow f$ in $H^{\ell}$.} on \eqref{Diffeq} gives
\begin{align}
\|(F_{\nu}-F)(t,\cdot)\|_{L^{2}}^2=&2\int\limits_{0}^{t}\int\limits_{\mathbb{T}^2}(e^{t\nu\partial_{y_2}^2}g-g(y_2))\partial_{1} F_{\nu}(F_{\nu}-F) dy_1dy_2 ds \label{EEdiff}\\
&+2\int\limits_{0}^{t}\int\limits_{\mathbb{T}^2}{\nu}\bigl((\lambda^2+1)\partial_{1}^2+ \partial_{2}^2\bigr) F_{\nu}(F_{\nu}-F) dy_1dy_2 ds:=I+II.\nonumber
\end{align}
First, let us estimate $I$:
$$|I|\leq 2t^{\frac{1}{2}}\|e^{t\nu\partial_{y_2}^2}g-g\|_{L^{\infty}(\mathbb{T}\times (0,t))}\|F_{\nu}-F\|_{L^{\infty}(0,t; L^{2})}\Big(\int\limits_{0}^{t}\|\partial_{1} F_{\nu}\|_{L^{2}}^2 ds\Big)^{\frac{1}{2}}. $$
This, Lemma \ref{Le-heat}, \eqref{TEEL2} and \eqref{L2Fest} imply that
\begin{equation}\label{Iest}
I\lesssim t^{\frac{3}{2}}\nu^{\frac{1}{2}}\|f\|_{L^{2}}\|g''\|_{L^{\infty}}.
\end{equation}
Now we estimate II in \eqref{EEdiff}:
$$II\leq 2(\nu t)^{\frac{1}{2}}(\lambda^2+1)\|F_{\nu}-F\|_{L^{\infty}(0,t; \dot{H}^{\frac{1}{2}})}\Big(\nu\int\limits_{0}^{t} \|F_{\nu}\|_{\dot{H}^{\frac{3}{2}}}^2 ds\Big)^{\frac{1}{2}}. $$
Using \eqref{H1/2Fest}-\eqref{H1/2Fnuest} and \eqref{L2H3/2est} gives
\begin{equation}\label{IIest}
II\lesssim_{\ell}(\nu t)^{\frac{1}{4}+\frac{\ell}{2}}(\lambda^2+1)e^{\frac{c\ell t\|g'\|_{L^{\infty}}}{2}}\big(\max(1, t\|g'\|_{L^{\infty}})^{\frac{1}{2}}+e^{\frac{c t\|g'\|_{L^{\infty}}}{2}}\big)\|f\|_{H^{\ell}}.
\end{equation}
Combining \eqref{Iest}-\eqref{IIest} gives
\begin{align}
&\sup_{t\in [0,T]}\|F_{\nu}(t,\cdot)-F(t,\cdot)\|_{L^{2}}^2\nonumber\\
&\lesssim_{\ell}(\nu T)^{\frac{1}{4}+\frac{\ell}{2}}(\lambda^2+1)e^{\frac{c\ell T\|g'\|_{L^{\infty}}}{2}}\big(\max(1, T\|g'\|_{L^{\infty}})^{\frac{1}{2}}\nonumber\\
&\quad+e^{\frac{c T\|g'\|_{L^{\infty}}}{2}}\big)\|f\|_{H^{\ell}}+T^{\frac{3}{2}}\nu^{\frac{1}{2}}\|f\|_{L^{2}}\|g''\|_{L^{\infty}}\label{L2diff}.
\end{align}
Using this, the fact that $u^{\nu}$ and $u^{E}$ are given by \eqref{3D-nu-solu} and  \eqref{3D-eu} and Lemma \ref{Le-heat} , we get the following. Namely,
\begin{align}
&\sup_{t\in [0,T]}\|u^{\nu}(t,\cdot)-u^{E}(t,\cdot)\|_{L^{2}}^2\nonumber\\
&\lesssim_{\ell} \sup_{t\in [0,T]}\|e^{t\nu\partial_{y_2}^2}g-g\|_{L^{2}}^2+(\nu T)^{\frac{1}{4}+\frac{\ell}{2}}(\lambda^2+1)e^{\frac{c\ell T\|g'\|_{L^{\infty}}}{2}}\big(\max(1, T\|g'\|_{L^{\infty}})^{\frac{1}{2}}\nonumber\\
&\qquad+e^{\frac{c T\|g'\|_{L^{\infty}}}{2}}\big)\|f\|_{H^{\ell}}+T^{\frac{3}{2}}\nu^{\frac{1}{2}}\|f\|_{L^{2}}\|g''\|_{L^{\infty}}\nonumber\\
&\lesssim (\nu T)^2\|g''\|_{L^{\infty}}+(\nu T)^{\frac{1}{4}+\frac{\ell}{2}}(\lambda^2+1)e^{\frac{c\ell T\|g'\|_{L^{\infty}}}{2}}\big(\max(1, T\|g'\|_{L^{\infty}})^{\frac{1}{2}}\nonumber\\
&\qquad+e^{\frac{c T\|g'\|_{L^{\infty}}}{2}}\big)\|f\|_{H^{\ell}}+T^{\frac{3}{2}}\nu^{\frac{1}{2}}\|f\|_{L^{2}}\|g''\|_{L^{\infty}}.\label{vvL2rate}
\end{align}
This gives \eqref{vvrate} for $\ell'=0$ as required. To get \eqref{vvrate} for $\ell'\in (0,\ell)$ we interpolate \eqref{L2diff} with \eqref{Hldiffest} to get
\begin{equation}\label{Hl'diff}
\sup_{t\in[0, T]} \|F_{\nu}(t,\cdot)-F(t,\cdot)\|_{\dot{H}^{\ell'}}{\leq \nu^{\frac{1}{4}(1-\frac{\ell'}{\ell})} C(\ell,\ell',T,\|g\|_{W^{2,\infty}},\|f\|_{H^{\ell}}) }.
\end{equation}
Furthermore, using Lemma \ref{Le-heat} we see that
\begin{equation}\label{Hl'heat}
\sup_{t\in [0,T]}\|e^{t\nu\partial_{y_2}^2}g-g\|_{\dot{H}^{\ell'}}\lesssim (\nu T)\|g\|_{W^{3,\infty}}.
\end{equation}
By similar reasoning as the $\ell'=0$ case we then get
\begin{multline}
\sup_{t\in [0,T]}\|u^{\nu}(t,\cdot)-u^{E}(t,\cdot)\|_{\dot{H}^{\ell'}}\\
\leq |\lambda|^{1+\ell'}\sup_{t\in [0,T]} \|F_{\nu}(t,\cdot)-F(t,\cdot)\|_{\dot{H}^{\ell'}}+\sup_{t\in [0,T]}\|e^{t\nu\partial_{y_2}^2}g-g\|_{\dot{H}^{\ell'}}.
\label{vvsobrate1}
\end{multline}
Combing this with \eqref{Hl'diff}-\eqref{Hl'heat} gives \eqref{vvrate} for all $\ell'\in [0,\ell)$ as required. 
 \end{proof}
\subsection{Strong ill-posedness for  3D Euler equations in anisotropic spaces}

Let   $f\in W^{1, p}(\T; W^{1,q}(\T))$, $g\in W^{1,q}(\T)$ and $p,q\in [2,\infty)$. For $\lambda\in\Z\setminus\{0\}$, consider the following initial data 
 $$ ( f(\lambda x_1+x_2, x_3),  -\lambda f(\lambda x_1+x_2, x_3)-g(x_3), 0 )\in W^{1,p}(\T^2; W^{1,q}(\T)), $$
 which generates an explicit  solution\footnote{The fact that this is a weak solution to the 3D Euler equations uses the same arguments as in \cite{BT}[Theorem 2]} $U^{E}$ in the form of \eqref{3D-Euler-solu} to the 3D Euler equations on the torus. 
Using identical reasoning as in \cite{BT07}, it is clear that the roughness of $g$ means that $u_E(t, x)$ will not lie in the expected solution space $W^{1,p}(\T^2; W^{1,q}(\T))$ for any positive time $t$, which shows strong ill-posedness in the sense of Hadamard (non-existence)  in the anisotropic Sobolev space. We also anticipate that it is possible to show illposedness of the 3D Euler equations for initial data that has dependence on three spatial dimensions and belongs to other anisotropic spaces (analogous to the isotropic spaces considered in \cite{BT}).

\appendix

\section{Heuristics for the structure of 2.75D shear flows}
\label{S3a}

In this appendix we give some heuristics about the derivation of 2.75D shear flows. As mentioned earlier, they are rotated versions of the parallel flows introduced by Wang \cite{wang2001kato}. Here we outline another derivation based on the analysis of the following reduced Navier-Stokes system
\begin{equation}\label{2.75ns}
\left\{\begin{aligned}
 \partial_t{ u^{\rm h}}+u^{\rm h}\cdot\nabla_{\rm h} u^{\rm h}+\nabla_{\rm h} \mathnormal P&=\Delta \mathnormal u^{\rm h},\\
 \partial_3  P&=0,\\
 \div_{\rm h}\mathnormal u^{\rm h}&=0,\\
 u^{\rm h}|_{t=0}&=u^{\rm h}_{\rm in}.
\end{aligned}\right..
\end{equation}
We dub that system the `2.75D Navier-Stokes equations'.\footnote{Once we get a solution for system \eqref{2.75ns}, then it also satisfies the so-called primitive equations, see for example the works of Cao and Titi \cite{CT}  and  Hieber and  Kashiwabara \cite{HK16}  on primitive equations.} It is well-known that solutions to this system with $H^1$ data are smooth.\footnote{A byproduct of Theorem \ref{Th-2} is that system \eqref{2.75ns} is ill-posed for generic data. The proof is by contradiction. In fact, if for any initial data $u_{\rm in}^{\rm h}$ satisfying condition \eqref{Structure-initialdata}, there always exists a  solution $ u^{\rm h}$ to the problem \eqref{2.75ns} on some time interval $[0, T]$,  then one  can extend  $u^{\rm h}$ to  a solution $( u^{\rm h}, 0)$ for  the 3D Navier-Stokes  problem \eqref{NS}.  By local well-posedness  theory for \eqref{NS}, regularity results in \cite{NP99} and weak-strong uniqueness, one confirms that  $u=(u^{\rm h}, 0)$ is the unique solution on  $[0, T]$  supplemented with initial data $(u^{\rm h}_{\rm in}, 0)$.  In particular, it   implies  that $u^3 \equiv 0$ will be preserved.}

Consider initial data $u^{\rm h}_{\rm in}$ of the form  
\begin{equation}\label{in-2.75-1}
u^{\rm h}_{\rm in}=\Bigl(\partial_2\phi(x), \,-\partial_1\phi(x)\Bigr).
\end{equation}
Let us look for solutions of system \eqref{2.75ns} under the form
 \begin{equation}\label{solution-2.75-1}
 u^{\rm h}=\Bigl(\partial_2\Phi(t, x), \,-\partial_1\Phi(t, x)\Bigr)=\nabla_{\rm h}^{\perp} \Phi,
 \end{equation}
where $\nabla_{\rm h}^{\perp}:=(\partial_2, -\partial_1)$. Notice that  the pressure is now given by  
\begin{equation*} 
\left\{\begin{aligned}
&  \Delta_{\rm h} P =-\div_{\rm h}(u^{\rm h}\cdot\nabla_h u^{\rm h})=2   \det\bigl({\rm Hessian}_{\rm h}~~\Phi \bigr)\\
&  \partial_3 P=0,
\end{aligned}\right.
\end{equation*}
 where  we   used the vector identities
\begin{align*}
\div_{\rm h} \Bigl((\nabla_{\rm h}^{\perp} \Phi)\cdot\nabla_{\rm h} (\nabla_{\rm h}^\perp \Phi)\Bigr)&= \div_{\rm h}\div_{\rm h}\Bigl((\nabla_{\rm h}^{\perp} \Phi)  \otimes (\nabla_{\rm h}^{\perp} \Phi) \Bigr) \\
&= (\nabla_{\rm h}\nabla_{\rm h}^\perp \Phi): (\nabla_{\rm h}\nabla_{\rm h}^\perp \Phi)^{\rm T}=-2 \det\bigl({\rm Hessian}_{\rm h}~~\Phi\bigr),
\end{align*}
where 
\begin{equation*} 
 {\rm Hessian}_{\rm h}{:=}\left(  \begin{array}{cc}
         \partial_{1}^2 & \partial_{1}\partial_{2}\\
         \partial_{1}\partial_{2}  & \partial_{2}^2
          \end{array}
          \right)\cdotp
\end{equation*}
In order to have $\partial_3 P=0$, one has to satisfy
\begin{align*}\label{CONDITION}
\partial_3\det\bigl({\rm Hessian}_{\rm h}~~\Phi \bigr)=0.
\end{align*}
If there exist a function $\Psi: \T\times\R_+\to \R$  and a constant $\lambda\in \mathbb{Z}$ such that 
\begin{equation}\label{structure-2.75-solu}
\mathcal{L}_\lambda \Phi(t, x)=\Psi(t, x_3) \quad{\rm with}\quad  \mathcal{L}_\lambda := \partial_1-\lambda \partial_2,
\end{equation}
then we have
\begin{equation*}
\partial_1 \mathcal{L}_\lambda \Phi= \partial_2 \mathcal{L}_\lambda \Phi=0  \quad{\rm i.e.}\quad
\left(  \begin{array}{c}
         \partial_{1}^2 \Phi  \\
         \partial_{1}\partial_{2}  \Phi
          \end{array}
          \right)=\lambda \left( \begin{array}{c}
         \partial_{1}\partial_{2} \Phi  \\
         \partial_{2}^2  \Phi
          \end{array}\right),
\end{equation*}
and thus
\begin{equation} \label{CONDITION}
  \det\bigl({\rm Hessian}_{\rm h}~~\Phi \bigr)=0.
\end{equation}
In the following, we will focus on the case \eqref{structure-2.75-solu} for the  Cauchy problem \eqref{2.75ns}. Concerning the initial data, we also need to look for  a function $\psi:  \T\to \R$  such that
 \begin{equation}\label{structure-2.75-initial}
 \mathcal{L}_{\lambda}\phi(x) -\psi(x_3)=0.
 \end{equation}
Recalling that the   velocity $u^{\rm h}=\nabla^\perp_{\rm h} \Phi$ and taking into consideration \eqref{structure-2.75-solu},  one has
$$u^{\rm h}\cdot\nabla_{\rm h} u^{\rm h}=(-1, \lambda)\,\Psi \partial_{2}^2\Phi$$ 
and $P$ is a constant. 
Finally,  we are lead to considering the following system 
  \begin{equation*} 
\left\{\begin{aligned}
& \partial_t{\partial_2 \Phi}-\Psi(t, x_3)\, \partial_{2}^2\Phi   = \Delta \partial_2 \Phi\quad&\hbox{in}\ ~~\R_+\times\T^3,\\
& \partial_t{(\lambda \partial_2 \Phi+\Psi(t, x_3))}-\lambda \Psi(t, x_3)\, \partial_{2}^2\Phi  = \Delta (\lambda\partial_2 \Phi+\Psi(t, x_3)) \quad&\hbox{in}\ ~~\R_+\times\T^3,\\
&(\Phi, \Psi)|_{t=0}= (\phi, \psi)  \quad{\rm with}\quad  \mathcal{L}_{\lambda}\phi-\psi(x_3)=0,
\end{aligned}\right.
\end{equation*}
which can be simplified  as
\begin{equation}\label{2.75ns-S}
\left\{\begin{aligned}
& \partial_t{\partial_2 \Phi}-\Psi(t, x_3)\, \partial_{2}^2\Phi   = \Delta \partial_2 \Phi  \quad&\hbox{in}\ ~~\R_+\times\T^3,\\
& \partial_t\Psi(t, x_3)  = \partial_{3}^2\Psi(t, x_3)\quad&\hbox{in}\ ~~\R_+\times\T,\\
&(\Phi, \Psi)|_{t=0}= (\phi, \psi)  \quad{\rm with}\quad  \mathcal{L}_{\lambda}\phi-\psi(x_3)=0.
\end{aligned}\right.
\end{equation}
In conclusion, $\Psi(t, x_3)= (\mathcal{K}\star\psi) (t, x_3)$, where $\mathcal K$ is the one-dimensional heat kernel see \eqref{e.heat1d}, and $\partial_2 \Phi$ satisfies the  \emph{linear} transport-heat equation 
 \begin{equation} \label{eq-partial_2 Phi}
\left\{\begin{aligned}
& \partial_t v + V\cdot\nabla v = \Delta v\quad {\rm in} ~~\R_+\times\T^3 \quad  {\rm with}\quad  V=(0, -\Psi(t, x_3), 0),\\
& v_{\rm in}=\partial_2 \phi.
\end{aligned}\right.
\end{equation}
Taking $\psi(x_3)=g(x_3)$ and $$\phi(x)=\phi(\lambda x_1+ x_2, x_3)=\int\limits_0^{\lambda x_1+x_2} f(y_1, x_3)\,d y_1+ x_1 \psi(x_3).$$
Obviously, $\phi(x)$ and $\psi(x_3)$ satisfy \eqref{structure-2.75-initial}, so the associated 
solution  $\partial_2 \Phi(t, x)$ of \eqref{eq-partial_2 Phi} for which $u^{\rm h}$ given by \eqref{solution-2.75-1} solves problem \eqref{2.75ns}.

\section*{Acknowledgment}
The authors thank Hao Jia for stimulating discussions concerning the inviscid damping for the 2.75D shear flows, {as well as Jean-Yves Chemin for a discussion relating to Theorem \ref{Th-3}}. We also thank Pierre Gilles Lemari\'e-Rieusset and Helena J. Nussenzveig Lopes for their comments on an earlier version of this paper.

 CP and JT are partially supported by the Agence Nationale de la Recherche,
project BORDS, grant ANR-16-CE40-0027-01. CP is also partially supported by the Agence Nationale de la Recherche,
 project SINGFLOWS, grant ANR-
18-CE40-0027-01, project CRISIS, grant ANR-20-CE40-0020-01, by the CY
Initiative of Excellence, project CYNA (CY Nonlinear Analysis) and project CYFI (CYngular Fluids and Interfaces). JT is also supported by the Labex MME-DII. TB and CP thank the Institute of Advanced Studies of Cergy Paris University for their hospitality. 

\section*{Data availability statement}

Data sharing is not applicable to this article as no datasets were generated or analyzed during the current study.

\section*{Conflict of interest}

The authors declare that they have no conflict of interest.

\small 
\bibliographystyle{abbrv}
\bibliography{breaking.bib}

\end{document}